\documentclass{amsart}

\usepackage[english]{babel}
\usepackage{latexsym,amsfonts,amsmath,amssymb,mathrsfs}
\usepackage[pdftex]{graphicx}
\usepackage{amsthm}
\usepackage{enumerate}
\usepackage{bbm}
\usepackage{amsthm}
\usepackage[pdfborder={0 0 0}]{hyperref}
%usepackage{fge}
%\usepackage{bm}
%\usepackage{turnstile}
%\usepackage{esvect}
%\usepackage{color}
%\usepackage{setspace}
\usepackage{comment}

\usepackage[textsize=footnotesize,color=yellow, bordercolor=white]{todonotes}

\newcommand{\comm}[1]{}

\DeclareRobustCommand{\SkipTocEntry}[5]{}

\newtheorem{satz}{Satz}[section]
\newtheorem{thm}[satz]{Theorem}
\newtheorem{prop}[satz]{Proposition}

\newcounter{cl}[satz]
\newtheorem{claim}[cl]{Claim}
\newcounter{subcl}[cl]
\setcounter{subcl}{0}

\newtheorem{cor}[satz]{Corollary}

\theoremstyle{definition}
\newtheorem{question}{Question}
%\numberwithin{section}{chapter}
\numberwithin{subsection}{section}

\newcounter{anhang}
\setcounter{anhang}{0}

\newcommand{\thistheoremname}{}
\newtheorem*{genericthm}{\thistheoremname}

% Macro für den | oder : in einer Menge
\newcommand{\st}{\mid}

% Macro für die Kardinalität einer Menge

%\newcommand{\card}[1]{\overline{\overline{#1}}}

\newcommand{\ZFC}{\ensuremath{\operatorname{ZFC}} }

\newcommand{\GCH}{\ensuremath{\operatorname{GCH}} }

\newcommand{\Ord}{\ensuremath{\operatorname{Ord}} }
\newcommand{\HOD}{\ensuremath{\operatorname{HOD}} }

\newcommand{\Card}{\ensuremath{\operatorname{Card}}}

\newcommand{\forces}{\Vdash}
\newcommand{\lt}{{\smalllt}}
\newcommand{\smalllt}{\mathrel{\mathchoice{\raise2pt\hbox{$\scriptstyle<$}}{\raise1pt\hbox{$\scriptstyle<$}}{\raise0pt\hbox{$\scriptscriptstyle<$}}{\scriptscriptstyle<}}}
\newcommand{\smallleq}{\mathrel{\mathchoice{\raise2pt\hbox{$\scriptstyle\leq$}}{\raise1pt\hbox{$\scriptstyle\leq$}}{\raise1pt\hbox{$\scriptscriptstyle\leq$}}{\scriptscriptstyle\leq}}}

% in Formel

% im Text

\newcommand{\pwimg}{\, "}

\newcommand{\cf}{\ensuremath{\operatorname{cf}}}

\newcommand{\dom}{\ensuremath{\operatorname{dom}}}

\newcommand{\V}{\mathscr V}

\newcommand{\cM}{\mathcal{M}}
\newcommand{\cN}{\mathcal{N}}

\newcommand{\cF}{\mathcal{F}}

\newcommand{\image}{\mathbin{\hbox{\tt\char'42}}}
\newcommand{\restrict}{\upharpoonright}

\newcommand{\p}{\mathbb P}
\newcommand{\la}{\langle}
\newcommand{\ra}{\rangle}
\newcommand{\q}{\mathbb Q}
\newcommand{\Coll}{\ensuremath{\operatorname{Coll}}}
\newcommand{\tail}{\text{tail}}
\newcommand{\sm}{\text{small}}

\newcommand{\Add}{\ensuremath{\operatorname{Add}}}
\newcommand{\Levy}{L\'{e}vy}
\newcommand{\Godel}{G\"odel}
\newcommand{\one}{\mathbbm{1}} % requires \usepackage{bbm}

\newcommand{\Hull}{\ensuremath{\operatorname{Hull}}}

\theoremstyle{definition}

\theoremstyle{remark}

\theoremstyle{definition}
\newtheorem{case}{Case}
\author{Sy-David Friedman} \address[S.-D. Friedman] {Kurt \Godel\
  Research Center, Institut f\"ur Mathematik, UZA 1, Universit\"at
  Wien. Augasse 2-6, 1090 Wien, Austria}
\email{sdf@logic.univie.ac.at}
\urladdr{http://www.logic.univie.ac.at/~sdf/} \thanks{The first-listed
  author wishes to acknowledge the support of the Austrian Science
  Fund (FWF) through Research Projects P28157 and P28420.}
\author{Victoria Gitman} \address[V. Gitman]{The City University of
  New York, CUNY Graduate Center, Mathematics Program, 365 Fifth
  Avenue, New York, NY 10016} \email{vgitman@nylogic.org}
\urladdr{https://victoriagitman.github.io/} \author{Sandra M\"uller}
\address[S. M\"uller] {Kurt \Godel\ Research Center, Institut f\"ur
  Mathematik, UZA 1, Universit\"at Wien. Augasse 2-6, 1090 Wien,
  Austria} \email{mueller.sandra@univie.ac.at}
\urladdr{https://muellersandra.github.io/} \thanks{The third-listed
  author, formerly known as Sandra Uhlenbrock, wishes to acknowledge
  the support of the Austrian Science Fund (FWF) through Research
  Project P28157. We would like to thank Gabriel Goldberg for his valuable comments on an earlier draft of the article.}

\title{Structural Properties of the Stable Core}

\date{\today}

\begin{document}
\maketitle

\begin{abstract}
The stable core, an inner model of the form $\la L[S],\in, S\ra$ for a simply definable predicate $S$, was introduced by the first author in \cite{Friedman:StableCore}, where he showed that $V$ is a class forcing extension of its stable core. We study the structural properties of the stable core and its interactions with large cardinals. We show that the $\GCH$ can fail at all regular cardinals in the stable core, that the stable core can have a discrete proper class of measurable cardinals, but that measurable cardinals need not be downward absolute to the stable core. Moreover, we show that, if large cardinals exist in $V$, then the stable
core has inner models with a proper class of measurable limits of measurables, with a proper class of measurable limits of measurable limits of measurables, and so forth. We show this by providing a characterization of natural inner models $L[C_1, \dots, C_n]$ for specially nested class clubs
$C_1, \dots, C_n$, like those arising in the stable core, generalizing recent results of Welch \cite{Welch:ClubClassHartigQuantifierModel}.
\end{abstract}

\section{Introduction}
The first author introduced the inner model \emph{stable core} while investigating under what
circumstances the universe $V$ is a class forcing extension of the
inner model $\HOD$, the collection of all hereditarily ordinal
definable sets \cite{Friedman:StableCore,Friedman:EnrichedStableCore}. He showed in \cite{Friedman:StableCore} that there is a robust $\Delta_2$-definable
class $S$ contained in $\HOD$ such that $V$ is a class-forcing extension of the structure \hbox{$\la L[S],\in, S\ra$}, which he called the stable core, by an $\Ord$-cc class partial order $\p$ definable from $S$. Indeed, for any inner model $M$, $V$ is a $\p$-forcing extension of $\la M[S],\in,S\ra$, so that in particular, since $\HOD[S]=\HOD$, $V$ is a $\p$-forcing extension of $\la \HOD,\in,S\ra$.

Let's explain the result in more detail for the stable core $L[S]$, noting that exactly the same analysis applies to $\HOD$. The partial order $\p$ is definable in \hbox{$\la L[S],\in,S\ra$} and there is a generic filter $G$, meeting all dense sub-classes of $\p$ definable in \hbox{$\la L[S],\in,S\ra$},
such that $V=L[S][G]$. All standard forcing theorems hold for $\p$ since it has the $\Ord$-cc. Thus, we get that the forcing relation for $\p$ is definable in $\la L[S],\in,S\ra$ and the forcing extension $\la V,\in,G\ra\models\ZFC$. However, this particular generic filter $G$ is not definable in $V$. To obtain $G$, we first force with an auxiliary forcing $\q$ to add a particular class $F$, without adding sets, such that $V=L[F]$. We then show that $G$ is definable from $F$ and $F$ is in turn definable in the structure $\la L[S][G], \in, S,G\ra$, so that $L[S][G]=V$. This gives a formulation of the result as a $\ZFC$-theorem because we can say (using the definitions of $\p$ and $\q$) that it is forced by $\q$ that $V=L[F]$, where $F$ is $V$-generic for $\q$, and (the definition of) $G$ is $\la L[S],\in,S\ra$-generic, and finally that $F$ is definable in $\la L[S][G],\in,S,G\ra$. Of course, a careful formulation would say that the result holds for all sufficiently large natural numbers $n$, where $n$ bounds the complexity of the formulas used.

Without the niceness requirement on $\p$ that it has the $\Ord$-cc, there is a much easier construction of a class forcing notion $\p$, suggested by Woodin, such that $V$ is a class forcing extension of $\la \HOD,\in, \p\ra$ (see the end of Section~\ref{sec:preliminaries}). At the same time, some additional predicate must be added to $\HOD$ in
order to realize all of $V$ as a class-forcing extension because, as
Hamkins and Reitz observed in
\cite{HamkinsReitz:The-set-theoretic-universe-is-not-necessarily-a-forcing-extension-of-HOD},
it is consistent that $V$ is not a class-forcing extension of $\HOD$. To construct such a counterexample, we suppose that $\kappa$ is inaccessible in $L$ and force over the Kelley-Morse model $\mathcal L=\la L_\kappa,\in, L_{\kappa+1}\ra$ to code the truth predicate of $L_\kappa$ (which is an element of $L_{\kappa+1}$) into the continuum pattern below $\kappa$. The first-order part $L_\kappa[G]$ of this extension cannot be a forcing extension of $\HOD^{L_\kappa[G]}=L_\kappa$ (by the weak homogeneity of the coding forcing), because the truth predicate of $L_\kappa$ is definable there and this can be recovered via the forcing relation.

While the definition of the partial order $\p$ is fairly involved, the
\emph{stability predicate} $S$ simply codes the elementarity relations
between sufficiently nice initial segments $H_\alpha$ (the collection of all sets with transitive closure of size less than $\alpha$) of $V$. Given a
natural number $n\geq 1$, call a cardinal $\alpha$ $n$-\emph{good} if
it is a strong limit cardinal and $H_\alpha$ satisfies
$\Sigma_n$-collection. The predicate $S$ consists of
triples $(n,\alpha,\beta)$ such that $n\geq 1$, $\alpha$ and $\beta$
are $n$-good cardinals and $H_\alpha\prec_{\Sigma_n}H_\beta$. We will denote by $S_n$ the $n$-th slice of the stability predicate $S$, namely $S_n=\{(\alpha,\beta)\mid (n,\alpha,\beta)\in S\}$.\footnote{Here we simplify the definition of $S$ originally given in \cite{Friedman:StableCore} to make it easier to work with. We do not claim that the definitions are equivalent, only that, it is not difficult to check that all the results from \cite{Friedman:StableCore} still hold with the definition given here.}

Clearly the stable core $L[S]\subseteq\HOD$, and the first author showed in \cite{Friedman:StableCore} that
it is consistent that $L[S]$ is smaller than $\HOD$. The stable core has several nice properties which fail for $\HOD$ such as that it is partially forcing absolute and, assuming $\GCH$, is preserved by forcing to code the universe into a real \cite{Friedman:StableCore}.

In order to
motivate the many questions which arise about the stable core let us
briefly discuss the set-theoretic goals of studying inner models.

The study of canonical inner models has proved to be one of the most
fruitful directions of modern set-theoretic research. The canonical
inner models, of which \Godel's constructible universe $L$ was the first
example, are built bottom-up by a canonical procedure. The resulting
fine structure of the models leads to regularity properties, such as
the $\GCH$ and $\square$, and sometimes even absoluteness
properties. But all known canonical inner models are incompatible with
sufficiently large large cardinals, and indeed each such inner model
is very far from the universe in the presence of sufficiently large
large cardinals in the sense, for example, that covering fails and the
large cardinals are not downward absolute.

The inner model $\HOD$ was introduced by \Godel, who showed that in a
universe of ${\rm ZF}$ it is always a model of $\ZFC$. But unlike the
constructible universe which also shares this property, $\HOD$ has
turned out to be highly non-canonical. While $L$ cannot be modified by
forcing, $\HOD$ can be easily changed by forcing because we can use
forcing to code information into $\HOD$. For instance, any subset of
the ordinals from $V$ can be made ordinal definable in a set-forcing extension
by coding its characteristic function into the continuum pattern, so
that it becomes an element of the $\HOD$ of the extension. Indeed, by
coding all of $V$ into the continuum pattern of a class-forcing
extension, Roguski showed that every universe $V$ is the $\HOD$ of one
of its class-forcing extensions \cite{Roguski:HOD}. Thus, any
consistent set-theoretic property, including all known large cardinals, consistently holds in $\HOD$. At the same time, the
$\HOD$ of a given universe can be very far from it. It is consistent
that every measurable cardinal is not even weakly compact in $\HOD$
and that a universe can have a supercompact cardinal which is not even
weakly compact in $\HOD$
\cite{ChengFriedmanHamkins:LargeCardinalsNeedNotBeLargeInHOD}. It is
also consistent that $\HOD$ is wrong about all successor cardinals
\cite{CummingsFriedmanGolshani:CollapsingCardinalsHOD}.

Does the stable core behave more like the canonical inner models or
more like $\HOD$? Is there a fine structural version of the stable
core, does it satisfy regularity properties such as the $\GCH$? Is
there a bound on the large cardinals that are compatible with the stable core? Or, on the other hand, are the large cardinals
downward absolute to the stable core? Can we code information into the
stable core using forcing?

In this article, we show the following results about the structure of
the stable core, which answer some of the aforementioned questions as
well as motivate further questions about the structure of the stable
core in the presence of sufficiently large large cardinals.

Measurable cardinals are consistent with the stable core.
\begin{thm}$\,$
\begin{itemize}
\item[(1)] The stable core of $L[\mu]$, the canonical model for one measurable
  cardinal, is $L[\mu]$. In particular, the stable core can have a measurable cardinal.
\item[(2)] If $\vec U=\la U_\alpha\mid\alpha\in\Ord\ra$ is a discrete collection of normal measures, then the stable core of $L[\vec U]$ is $L[\vec U]$. In particular, the stable core can have a discrete proper class of measurable cardinals.
\end{itemize}
\end{thm}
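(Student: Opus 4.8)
The plan is to reduce both parts to a single membership statement and then to extract the measure(s) from the stability predicate via the canonical iteration. First I would dispatch the easy half. In $L[\mu]$ the measurable cardinal $\kappa$ is ordinal definable as the least measurable, and since $L[\mu]$ carries a unique normal measure on $\kappa$, the measure $\mu$ is ordinal definable as well. Hence $\mu\in\HOD$ and $L[\mu]\subseteq\HOD$, while $\HOD\subseteq V=L[\mu]$ gives $\HOD^{L[\mu]}=L[\mu]$. As the stable core satisfies $L[S]\subseteq\HOD$, we already obtain $L[S]\subseteq L[\mu]$, so the theorem reduces to showing $\mu\in L[S]$, for then $L[\mu]\subseteq L[S]$. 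The same reduction applies verbatim to $L[\vec U]$: discreteness lets us define the $\alpha$-th measurable and its unique normal measure uniformly, so $\vec U$ is ordinal definable, $\HOD^{L[\vec U]}=L[\vec U]$, and it suffices to prove $\vec U\in L[S]$.

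For the hard half in part (1) I would pass through the indiscernibles generated by iterating $\mu$. Let $\la\kappa_\alpha\mid\alpha\in\Ord\ra$ be the critical sequence of the linear iteration of $\mu$ through the ordinals, and set $C=\{\kappa_\alpha\mid\alpha\in\Ord\}$. Standard facts about $L[\mu]$ give that $C$ is a closed unbounded class of indiscernibles with $L[\mu]=\Hull^{L[\mu]}(C)$, and since the iteration is definable in $L[\mu]$ we have $C\in L[\mu]$. The key computation is that $\mu$ is definable from $C$ by a shift of indiscernibles: if $X\subseteq\kappa$ is defined over $L[\mu]$ by a formula $\varphi$ with indiscernible parameters $\kappa_{i_1}<\dots<\kappa_{i_m}$ above $\kappa$, then $X\in\mu$ if and only if $\kappa\in j_\mu(X)$, which by indiscernibility and the fact that $j_\mu$ shifts the critical sequence up by one reduces to whether $L[\mu]\models\varphi(\kappa,\kappa_{i_1+1},\dots,\kappa_{i_m+1})$. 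Thus membership in $\mu$ is read off from $C$ together with the (absolute, recursive) Skolem terms, giving $L[\mu]\subseteq L[C]$ and hence $L[\mu]=L[C]$; here one must also verify that the relevant Ehrenfeucht--Mostowski blueprint is itself recoverable from $C$, which I expect to follow from the uniqueness and remarkability of the critical sequence.

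The crux, and the step I expect to be the main obstacle, is to show $C\in L[S]$, that is, that the class of iteration points is definable from the stability predicate. The plan is to match $C$ against the elementarity pattern recorded by $S$. From $S$ one defines, for each $n$, the club class $C_n=\{\alpha\mid H_\alpha\prec_{\Sigma_n}L[\mu]\}$ of $\Sigma_n$-correct cardinals, since $H_\alpha\prec_{\Sigma_n}L[\mu]$ holds exactly when $(n,\alpha,\beta)\in S$ for cofinally many good $\beta$. The indiscernibles $\kappa_\alpha$ are $\Sigma_n$-correct for every $n$, so $C\subseteq\bigcap_n C_n$; the difficulty is the reverse inclusion, namely isolating the iteration points among all highly correct cardinals using the finer relations $H_\alpha\prec_{\Sigma_n}H_\beta$ between initial segments that $S$ encodes, and checking that this isolation is uniform enough to place $C$ in $L[S]$. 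This is precisely the kind of computation handled by a characterization of inner models $L[C_1,\dots,C_n]$ for specially nested class clubs.

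Finally, part (2) follows the same three steps, with the additional bookkeeping of a proper class of measures. Because $\vec U$ is discrete, its measurable cardinals are not limits of measurables, so the iterations of the individual $U_\alpha$ do not interfere and each one contributes its own closed unbounded class of indiscernibles; running the shift computation on each strand recovers every $U_\alpha$, yielding $\vec U\in L[C_1,\dots]$. The simultaneous definability of all these classes from $S$, organized through the nested club classes arising from the stability predicate, is again the point that must be verified, and the discreteness hypothesis is exactly what keeps the nesting mild enough for the argument to go through.
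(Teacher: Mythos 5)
Your reduction to showing $\mu\in L[S]$ (indeed $L[S]\subseteq V$ already gives the easy inclusion, no need for $\HOD$) is fine, but the hard direction contains genuine errors, and part (2) cannot be handled by "the same three steps." The critical sequence $C=\{\kappa_\alpha\mid\alpha\in\Ord\}$ of the self-iteration is \emph{not} a class of indiscernibles for $L[\mu]$ ($\kappa_0$ is measurable in $L[\mu]$ while $\kappa_1=j_{01}(\kappa_0)$ is not), and $L[\mu]\ne\Hull^{L[\mu]}(C)$. The correct classical statement is that the iteration points are generating indiscernibles for the iterate $L[\mu_\lambda]$ \emph{over the parameters} $j_\lambda\image L[\mu]$. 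Your shift computation founders on this: $\kappa\in j_\mu(X)$ unwinds to a satisfaction statement in $\Ult(L[\mu],\mu)$, not in $L[\mu]$, and transferring it back needs indiscernibility over $L[\mu]$ with parameters below $\kappa$; but if the $\kappa_i$ were indiscernible over $(L[\mu],\xi)_{\xi<\kappa}$, then shifting the tuple would not change the defined subset of $\kappa$ at all, so the hull would contain only countably many subsets of $\kappa$ versus the $\kappa^+$ many in $L[\mu]$ --- the two hypotheses your computation needs are jointly inconsistent. Moreover your admitted crux, $C\in L[S]$, is left unproved and threatens circularity. The paper avoids all of this: it recovers the \emph{image} measure $\mu_\lambda$ as the restriction to $L[\mu_\lambda]$ of the tail filter generated by $\{\eta<\lambda\mid(\eta,\lambda)\in S_m\}$ (using only that a tail of the cardinals below $\lambda$ are iteration points), giving $L[\mu_\lambda]\subseteq L[S_m]$ with no need to isolate $C$; and it then descends from $L[\mu_\lambda]$ to $L[\mu]$ by a second, essential step your proposal has no substitute for --- a Skolem hull of $\kappa\cup A$ in $L_\theta[\mu_\lambda]$, where $A$ is an $L[S_m]$-definable set of $(\kappa^+)^V$-many fixed points of $j_\lambda$, whose transitive collapse sends $\lambda$ to $\kappa$ and is identified with $L_{\bar\theta}[\mu]$ by Kunen's uniqueness theorem.

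For part (2) the analogous direct argument genuinely fails: Kunen-style uniqueness for models with a sequence of measures requires the number of measures to be below the least measurable in the sequence, so it does not apply to a proper class of measures, and one cannot simultaneously iterate class-many measures and take a single hull. The paper's proof of (2) is of a different character entirely: working in $V=L[\vec U]$ it coiterates $K^{L[S]}$ with $K^V$, rules out a drop on the $K$-side via the covering lemma for sequences of measures (using the clubs $C_n$ of Proposition~\ref{prop:SigmaElementaryClubs} to produce singular ordinals of uncountable cofinality in $L[S]$ that would be regular in the iterate), rules out a drop on the $K_0$-side by universality, and in the non-dropping case shows $K$ cannot move. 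Some argument of this kind is needed; discreteness of $\vec U$ alone does not reduce (2) to (1).
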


We can code information into the stable core over $L$ or $L[\mu]$
using forcing.
\begin{thm}
  Suppose $\p\in L$ is a forcing notion and $G\subseteq \p$ is
  $L$-generic. Then there is a further forcing extension $L[G][H]$
  such that $G\in L[S^{L[G][H]}]$ (the universe of the stable core). An analogous result holds for
  $L[\mu]$.
\end{thm}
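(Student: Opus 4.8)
The plan is to reduce the theorem to coding a set of ordinals into the \emph{domain} of the stability predicate, i.e.\ into the class of good cardinals, which is directly recoverable inside the stable core. First I would code $G$ as a set of ordinals $A$ (using that $\p\in L$ is a set, so $G$ is essentially a bounded subset of $\Ord$ that is recoverable from $A$ and $\p$ inside any inner model containing both). Passing to $L[G]$ and forcing $\GCH$ first if necessary (absorbing this preliminary step into $H$, and using that $\GCH$ already holds over $L$ and $L[\mu]$), I may assume $\GCH$ holds in the ground model. The whole point is then to arrange a single Easton-style class forcing $H$ whose effect on the continuum function toggles, at a definable sequence of locations, whether certain cardinals remain strong limits, and hence whether they remain \emph{good}.

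Here is the coding mechanism I would use. Fix in $L$ a sparse, definable class club of \emph{anchor} cardinals $\la \delta_\xi \mid \xi\in\Ord\ra$ (for instance sufficiently closed inaccessible cardinals) chosen so that no forcing performed below $\delta_\xi$ disturbs $\delta_\xi$ itself, and so that the anchors are recoverable from the class of good cardinals alone (arranged, say, as those limit points of $\dom(S_n)$ lying in $\dom(S_n)$, by keeping good cardinals cofinal below each anchor). In the interval $(\delta_\xi,\delta_{\xi+1})$ I place the coding: if the $\xi$-th bit of $A$ is $1$, then $H$ leaves one strong limit cardinal $\gamma_\xi$ in the interval untouched while destroying the strong-limit property of every other cardinal there (by forcing $\GCH$ to fail densely, i.e.\ adding many subsets of suitable smaller cardinals, with the new power sets kept below $\delta_{\xi+1}$); if the bit is $0$, then $H$ destroys the strong-limit property of \emph{every} cardinal in the interval. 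Thus, in $L[G][H]$, the interval $(\delta_\xi,\delta_{\xi+1})$ contains a good cardinal if and only if the $\xi$-th bit of $A$ is $1$.

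The decoding then takes place inside $L[S]$ for $S=S^{L[G][H]}$. From $S$ one reads off the class of $n$-good cardinals, recoverable as $\dom(S_n)=\{\alpha\mid (n,\alpha,\alpha)\in S\}$; from this class one identifies the anchors $\delta_\xi$, and the $\xi$-th bit of $A$ is computed as the truth value of the statement ``$\dom(S_n)\cap(\delta_\xi,\delta_{\xi+1})\neq\emptyset$.'' Hence $A\in L[S]$, and since $\p,A\in L[S]$ we recover $G\in L[S]$, as required. For the $L[\mu]$ version I would run the same construction above the measurable cardinal $\kappa$, using that the stable core of $L[\mu]$ is already $L[\mu]$ to see that $\mu$ is captured, and arrange $H$ to be mild enough at $\kappa$ (factoring as a small forcing followed by a sufficiently closed tail, or lifting the ultrapower embedding in the standard way) that $\kappa$ remains measurable in the final model.

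The main obstacle I anticipate is proving that $H$ controls the class of good cardinals \emph{exactly} as intended: one must verify (i) that the Easton product preserves $\ZFC$ together with all cardinals and cofinalities needed, (ii) that it creates no spurious strong limit cardinal inside a ``bit $0$'' interval and destroys none of the anchors, so that the decoding reads back precisely $A$, and (iii) that $\Sigma_n$-collection is retained at the surviving coding cardinals $\gamma_\xi$, so that ``strong limit'' and ``good'' coincide at the relevant locations and $\dom(S_n)$ genuinely records the intended pattern. The delicate bookkeeping lies in balancing the dense $\GCH$-failures used to kill strong limits against the preservation of the anchors and of the single surviving $\gamma_\xi$, and, in the $L[\mu]$ case, in carrying this out while preserving measurability.
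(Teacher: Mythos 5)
Your overall strategy --- toggle the pattern of good cardinals by an Easton-style product of Cohen forcings and read the pattern off the stability predicate --- is close in spirit to what the paper does (it codes by toggling whether designated \emph{pairs} survive into $S_m$, using collapses in Theorem~\ref{th:addGenericSetCoreOverL} and Cohen forcing on designated strong limits in Theorems~\ref{th:GCHfails} and~\ref{th:LcardNotStable}). But your specific bit-$0$ mechanism cannot work as stated. You ask the forcing to destroy the strong-limit property of \emph{every} cardinal in the open interval $(\delta_\xi,\delta_{\xi+1})$ while keeping all new power sets below $\delta_{\xi+1}$. In any model of $\ZFC$ the least strong limit cardinal above $\delta_\xi$ is $\sup_n\lambda_n$ where $\lambda_0=\delta_\xi^+$ and $\lambda_{n+1}=(2^{\lambda_n})^+$, and this cardinal always has cofinality $\omega$; since your forcing keeps the continuum function below $\delta_{\xi+1}$ mapping into $\delta_{\xi+1}$, this least strong limit lies \emph{inside} the open interval whenever $\cf(\delta_{\xi+1})>\omega$ --- in particular for your suggested inaccessible anchors, where the closure points of the new continuum function in fact form a club in $\delta_{\xi+1}$. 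So a bit-$0$ interval necessarily still contains ($1$-)good cardinals of the extension and decodes as $1$. Relatedly, your intrinsic identification of the anchors as ``limit points of $\dom(S_n)$ lying in $\dom(S_n)$, by keeping good cardinals cofinal below each anchor'' directly contradicts the bit-$0$ requirement that the interval below a successor anchor contain no good cardinals.

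Both problems disappear if you abandon the ``is the interval empty of good cardinals'' test and instead decode pointwise, which is what the paper does: since $L\subseteq L[S]$ always, the decoder has access to any $L$-definable sequence of coding locations, so it suffices to designate for each bit $\xi$ a single $L$-definable cardinal (or pair) and toggle whether \emph{that} cardinal remains good (equivalently, whether that pair remains in $S_m$). Concretely, the paper takes $(\beta_\xi,\beta_\xi^*)\in S_m^L$ and, for $\xi\notin G$, forces with $\Coll(\delta_\xi^+,\beta_\xi)$ so that $\beta_\xi$ is no longer a cardinal (or, in the Cohen variant, adds $\beta_\xi$ subsets of $\delta_\xi^+$ so that exactly the block $(\delta_\xi^+,\beta_\xi]$ loses strong-limitness); for $\xi\in G$ the stage is trivial and Proposition~\ref{prop:liftingElementarity} (small initial factor plus closed tail) shows the pair stays in $S_m$. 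The newly created strong limits of countable cofinality in nontrivial blocks are then harmless, because the decoder only ever queries the designated $L$-definable locations. Your preliminary reductions (coding $G$ as a set of ordinals, recovering $G$ from $A$ and $\p$ inside $L[S]$, and the $L[\mu]$ variant via closure of the coding forcing above $\kappa$) are fine and match the paper.
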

An extension of the coding results shows that the $\GCH$ can fail
badly in the stable core.
\begin{thm}$\,$
\begin{itemize}
\item[(1)] There is a class-forcing extension of $L$ such that in its stable core the $\GCH$
  fails at every regular cardinal.
\item[(2)] There is a class-forcing extension of $L[\mu]$ such that in its stable core there is
  a measurable cardinal and the $\GCH$ fails on a tail of regular
  cardinals.
\end{itemize}
\end{thm}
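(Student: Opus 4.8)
The plan is to realize the failure of the $\GCH$ first by a cardinal-preserving forcing over the ground model and then to apply the coding theorem above to pull the generic into the stable core; since the stable core is an inner model of the final extension, the failure of the $\GCH$ then transfers automatically from the intermediate model down to the stable core.

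For (1) I would let $\p$ be the Easton-support product $\prod_{\kappa}\Add(\kappa,\kappa^{++})$ over all regular cardinals $\kappa$, defined in $L$, and let $G$ be $L$-generic. Then $\p$ preserves cardinals and cofinalities and forces $2^\kappa=\kappa^{++}$ at every regular $\kappa$, so $(\kappa^{++})^{L[G]}=(\kappa^{++})^{L}$. Applying the coding theorem (understood to apply equally to the class forcing $\p$) to $\p$ and $G$ produces a further extension $V:=L[G][H]$ with $G\in L[S]$, where $S=S^{V}$, and with $H$ preserving cardinals, so that $(\kappa^{++})^{V}=(\kappa^{++})^{L[G]}$ for all regular $\kappa$. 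Since $L\subseteq L[S]$ and $G\in L[S]$, I obtain the chain $L[G]\subseteq L[S]\subseteq V$. Now fix a regular $\kappa$ and set $\theta=(\kappa^{++})^{L[G]}=(\kappa^{++})^{V}$. In $L[G]$ there is a bijection between $\theta$ and $\Pot(\kappa)$, and this map lies in $L[G]\subseteq L[S]$; as $L[S]\subseteq V$, every $V$-cardinal is a cardinal of $L[S]$, so $\theta$ remains a cardinal in $L[S]$ and $(\kappa^{++})^{L[S]}\le(\kappa^{++})^{V}=\theta$. Hence $(2^\kappa)^{L[S]}\ge\theta\ge(\kappa^{++})^{L[S]}$, i.e. the $\GCH$ fails at $\kappa$ in the stable core, and since $\kappa$ was arbitrary, (1) follows.

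For (2) let $\kappa_0$ be the measurable cardinal of $L[\mu]$ and let $\p$ be the Easton product $\prod_{\kappa>\kappa_0}\Add(\kappa,\kappa^{++})$ taken only over regular $\kappa>\kappa_0$. This $\p$ is closed under descending sequences of length $\le\kappa_0$, hence adds no new subsets of $\kappa_0$; so $\Pot(\kappa_0)$ is unchanged and $\mu$ remains a normal measure witnessing that $\kappa_0$ is measurable, while $\p$ still preserves cardinals and forces $2^\kappa=\kappa^{++}$ for all regular $\kappa>\kappa_0$. Applying the $L[\mu]$-version of the coding theorem to $\p$ and an $L[\mu]$-generic $G$ yields $V:=L[\mu][G][H]$ with $G\in L[S]$. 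Here I would argue, exactly as in the computation that the stable core of $L[\mu]$ is $L[\mu]$, that the construction keeps $L[\mu]\subseteq L[S]$, so that in particular $\mu\in L[S]$ and $\kappa_0$ is measurable in the stable core; the tail failure of the $\GCH$ above $\kappa_0$ in $L[S]$ then follows by the identical cardinal computation as in (1).

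The hard part is twofold. First, I must know that the coding forcing $H$ preserves cardinals, so that $(\kappa^{++})^{L[G]}=(\kappa^{++})^{V}$ and the transfer computation is valid; this relies on the cardinal-preservation built into the coding theorem. Second, and more seriously, for (2) I must guarantee that the measure itself, and not merely the generic $G$, is captured by the stable core of the final model, i.e. that $L[\mu]\subseteq L[S^{V}]$. This is the genuinely new ingredient, and it requires adapting the proof that the stable core of $L[\mu]$ equals $L[\mu]$ to the situation obtaining after the coding forcing, so that the measurability of $\kappa_0$ survives all the way into $L[S]$ rather than only into $V$.
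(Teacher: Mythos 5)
Your high-level architecture (force the $\GCH$ failure, code the generic into the stable core, then run the sandwich argument $L[G]\subseteq L[S]\subseteq V$ with agreement of cardinals) is exactly the paper's, and your explicit cardinal computation at the end of (1) is correct and welcome. But there is a genuine gap where you write that the coding theorem is ``understood to apply equally to the class forcing $\p$'' and that cardinal preservation is ``built into the coding theorem.'' Neither is true of Theorem~\ref{th:addGenericSetCoreOverL} as proved. That theorem assumes $\p$ is a \emph{set} forcing, so that $G$ can be coded as a subset of a single cardinal $\kappa$ and the $\kappa$-many coding pairs can be placed entirely above the support of $\p$; for the Easton-support class product over all regular cardinals, $G$ is a proper class and the coding pairs must be interleaved unboundedly through the ordinals, where they unavoidably interact with the $\GCH$-violating forcing. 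Worse, the coding forcing of Theorem~\ref{th:addGenericSetCoreOverL} uses $\Coll(\delta_\xi^+,\beta_\xi)$ at non-trivial stages: it deliberately collapses cardinals, and Claim~\ref{cl:codingPreservesGCH} shows it \emph{restores} the $\GCH$ on the region where it acts. Running that coding cofinally in the ordinals would reinstate the $\GCH$ at a proper class of regular cardinals and destroy the very failure you are trying to preserve.

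The paper's proof of this theorem therefore builds a \emph{different} coding forcing: at a non-trivial stage $\xi$ it forces with $\Add(\delta_\xi^+,\beta_\xi)$ (or $\Add(\delta_\xi,\beta_\xi)$ when $\delta_\xi$ is inaccessible), which kills the strong limit property of $\beta_\xi$ without collapsing any cardinals, and one then verifies by hand that the Easton-support class product of these is cardinal-preserving (the delicate case being $\delta_\lambda^+$ for singular limits $\delta_\lambda$) and that the $\GCH$ keeps failing at every regular cardinal in the final model. Decoding is via the class of strong limit cardinals of $V$, which is definable in the stable core by Corollary~\ref{cor:stronglimDefinable}. Your proof of (1) needs this replacement coding and its preservation lemmas; as written, the step ``$H$ preserves cardinals'' has no justification. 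For (2) your outline (force only above $\kappa_0$ with a $\leq\kappa_0$-closed Easton product, invoke $L[\mu]\subseteq L[S^{V}]$ via Theorem~\ref{thm:LmuSC} to keep $\kappa_0$ measurable in the stable core) is the intended argument and is fine, but it inherits the same dependence on the cardinal-preserving version of the coding.
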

Measurable cardinals need not be downward absolute to the stable core.
\begin{thm}
  There is a forcing extension of $L[\mu]$ in which the measurable
  cardinal $\kappa$ of $L[\mu]$ remains measurable, but it is not even
  weakly compact in the stable core.
\end{thm}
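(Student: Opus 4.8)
The plan is to adapt the $\HOD$ construction of Cheng, Friedman and Hamkins \cite{ChengFriedmanHamkins:LargeCardinalsNeedNotBeLargeInHOD}, who proved the corresponding statement with $\HOD$ in place of the stable core, replacing their $\HOD$-coding by the stable-core coding that underlies the preceding theorems. I work over $L[\mu]$, where $\kappa$ is the measurable cardinal, $\mu$ its normal measure, $j\colon L[\mu]\to M=\Ult(L[\mu],\mu)$ the ultrapower embedding with $\crit(j)=\kappa$, and $\GCH$ holds. The forcing factors as $\p=\p_0\ast\p_1$. The first factor $\p_0$ is a reverse Easton iteration of length $\kappa$ which builds a homogeneous, slim $\kappa$-Souslin tree $T$ by its ${<}\kappa$-closed stages and simultaneously codes $T$ into the continuum function at cardinals below $\kappa$, along the lines of the reverse Easton coding used in the preceding theorems. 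Note that $\p_0$ on its own adds a $\kappa$-Souslin tree and hence destroys the weak compactness of $\kappa$; the second factor $\p_1$ is the tree $T$ itself, ordered by reverse extension, which adds a cofinal branch $b$ and, as we shall see, restores the measurability of $\kappa$.

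Next I check that $T$ lies in the stable core while no cofinal branch of $T$ does. By the analysis behind the preceding theorems the continuum function is recorded by the stability predicate $S$ --- each instance ``$2^\gamma=\delta$'' is $\Sigma_n$-expressible in the $H_\alpha$-hierarchy and hence reflected in the $\prec_{\Sigma_n}$-pattern between $n$-good cardinals --- and since $\p_1$ is ${<}\kappa$-closed it adds no subsets of cardinals below $\kappa$, so this pattern, and with it $T$, survives into $V[G]=L[\mu][G_0][b]$; thus $T\in L[S]$. On the other hand $T$ is Souslin in $L[\mu][G_0]$, so it has no cofinal branch there, and because $T$ is homogeneous the forcing $\p_1$ is weakly homogeneous and ordinal definable over $L[\mu][G_0]$. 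Hence every subset of $\kappa$ lying in $\HOD^{V[G]}$ already belongs to $L[\mu][G_0]$, so no cofinal branch of $T$ --- coded by a subset of $\kappa$ and hence a branch already present in $L[\mu][G_0]$, where none exists --- can lie in $\HOD^{V[G]}\supseteq L[S]$.

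It remains to restore the measurability of $\kappa$ and read off the conclusion. Although $\p_0$ by itself destroys weak compactness, I would lift $j$ through the composite $\p=\p_0\ast\p_1$ in a single step, as in Kunen's classical construction of a generic branch through a Souslin tree: the branch $b$ supplies a master condition for $j(T)$, since $j[b]$ has an upper bound at level $\kappa$ of $j(T)$ inside $M$ (using that $M$ is closed under $\kappa$-sequences), while the part of $j(\p)$ acting above $\kappa$ is closed enough in $M[G]$ that an $M[G]$-generic for it can be assembled in $V[G]$. The resulting lift $j^+\colon L[\mu][G]\to M[j^+(G)]$ yields the normal measure $U=\{X\subseteq\kappa\mid\kappa\in j^+(X)\}$, so $\kappa$ is measurable, and hence weakly compact, in $V[G]$. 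Consequently $T$ acquires a cofinal branch in $V[G]$, which by the previous paragraph is not in $L[S]$. Finally, $\kappa$ is inaccessible in $V[G]$ and hence, by downward absoluteness, in $L[S]$; as $T$ is slim it remains a $\kappa$-tree in $L[S]$; and since $L[S]$ contains no cofinal branch of $T$, the tree property fails at $\kappa$ in $L[S]$, so $\kappa$ is not weakly compact there.

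The main obstacle is reconciling three requirements that pull in opposite directions. The coding in $\p_0$ must be rigid enough that the stability predicate records $T$ (so that $T\in L[S]$); the tree $T$ must be homogeneous enough that $\p_1$ keeps every cofinal branch out of $\HOD$ (so that no branch reaches $L[S]$); and the composite $\p_0\ast\p_1$ must still admit a lift of $j$ (so that $\kappa$ stays measurable in $V[G]$). The delicate technical work is the master-condition lift through the branch forcing on top of the coding iteration, together with the verification --- by the same bookkeeping of which subsets of $\kappa$ the stability predicate captures that drives the proofs of the preceding theorems --- that $S$ records the tree but none of its branches.
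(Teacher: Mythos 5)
Your high-level architecture (homogeneous $\kappa$-Souslin tree $T$, code $T$ into the stable core, then force with $T$ to resurrect measurability \`a la Kunen) is exactly the paper's, and your lifting step and your observation that $\q\ast\dot T$ absorbs into $\Add(\kappa,1)$ match the paper's argument. But there is a genuine gap in the coding step, which is the heart of the matter. You claim that the continuum function below $\kappa$ is ``recorded by the stability predicate $S$'' because each instance $2^\gamma=\delta$ is expressible in the $H_\alpha$-hierarchy. That inference is a non sequitur: $S$ only records \emph{which} pairs of $n$-good cardinals satisfy $H_\alpha\prec_{\Sigma_n}H_\beta$, not the theories of the $H_\alpha$'s, and from $S$ the stable core can recover essentially only the class of strong limit cardinals and the clubs $C_n$ (Corollary~\ref{cor:stronglimDefinable}, Proposition~\ref{prop:SigmaElementaryClubs}). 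Two models with different continuum functions can have identical stability predicates above the forcing (Corollary~\ref{cor:forcingabsolutness}), so coding $T$ into the GCH pattern --- the Cheng--Friedman--Hamkins mechanism, which works for $\HOD$ precisely because the continuum function is ordinal definable --- does not put $T$ into $L[S]$. There is also a circularity in coding a stage-$\kappa$ object into stages below $\kappa$ of a length-$\kappa$ iteration, which a genuine self-encoding construction would have to resolve. The paper avoids all of this by coding $T$ with the dedicated forcing $\mathbb C$ of Theorem~\ref{th:addGenericSetCoreOverL} \emph{high above} $\kappa$, flipping membership of coding pairs of strong limit cardinals in $S_1$; since $\mathbb C$ is $\leq\kappa$-closed, it disturbs neither the tree, nor the branch, nor the lift, and decoding works because $L[\mu]\subseteq L[S]$ supplies the coding-pair sequence.

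Two smaller points. First, your use of weak homogeneity of $T$ to conclude that no cofinal branch enters $\HOD^{V[G]}\supseteq L[S]$ is a legitimate alternative to the paper's more laborious verification that $S^W=S^{W[b]}$ (the paper's case $\alpha\leq\kappa<\beta$ is the delicate one), and it suffices for non-weak-compactness --- but only once $T\in L[S^{V[G]}]$ is secured, which is exactly the broken step. Second, $T$ as a forcing is ${<}\kappa$-distributive and $\kappa$-cc, not ${<}\kappa$-closed; your conclusion that it adds no small subsets survives, but the justification should be distributivity.
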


Although we don't know whether the stable core can have a measurable limit of measurables, the stable core has inner models with measurable limits of measurables, and much more. Say that a cardinal $\kappa$ is $1$-\emph{measurable}
if it is measurable, and, for $n < \omega$, $(n+1)$-\emph{measurable} if it
is measurable and a limit of $n$-measurable cardinals. Write $m_0^\#$ for $0^\#$ and $m_n^\#$ for the minimal mouse which is a sharp for a proper class of $n$-measurable cardinals, namely, an active mouse $\cM$ such that the critical point of the top extender is a limit of $n$-measurable cardinals in $\cM$. Here we mean \emph{mouse} in the sense of \cite[Sections 1 and 2]{Mitchell:BeginningInnerModelTheory}, i.e., a mouse has only total measures on its sequence. The mouse $m_n^{\#}$ can also be construed as a fine structural
mouse with both total and partial extenders (see
\cite{Zeman:InnerModelsLargeCardinals}, Section 4).
\begin{thm}
  For all $n<\omega$, if $m_{n+1}^\#$ exists, then $m_n^\#$ is in the
  stable core.
\end{thm}

Moreover, we obtain the following characterization of natural inner
models of the stable core.

\begin{thm}
  Let $n < \omega$ and suppose that $m_n^\#$ exists. Then whenever
  $$C_1\supseteq C_2\supseteq\ldots\supseteq C_n$$ are class clubs of
  uncountable cardinals such that for every $1<i\leq n$ and every
  $\gamma \in C_i$,
\[\la H_\gamma,\in, C_1,\ldots,C_{i-1}\ra\prec_{\Sigma_1} \la V,\in,
  C_1,\ldots,C_{i-1}\ra,\] then $L[C_1,\ldots,C_n]$ is a hyperclass-forcing extension of a (truncated) iterate of $m_n^\#$.
\end{thm}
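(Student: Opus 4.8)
The plan is to reconstruct a (truncated) iterate $\cN$ of $m_n^\#$ definably from the clubs inside $L[C_1,\ldots,C_n]$, and then to exhibit $\la C_1,\ldots,C_n\ra$ as a generic object over $\cN$ for a suitable hyperclass forcing $\bP\in\cN$, so that $L[C_1,\ldots,C_n]=\cN[g]$. I would build the iteration of $m_n^\#$ level by level, with the club $C_i$ governing the $i$-th level; the nesting $C_1\supseteq\cdots\supseteq C_n$ is meant to mirror the layered measurability structure of $m_n^\#$ (its top critical point being a limit of $n$-measurables, each of those a limit of $(n-1)$-measurables, and so on), the sparsest and most reflecting club $C_n$ matching the deepest level. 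The $\Sigma_1$-elementarity hypothesis on $C_i$ over $\la V,\in,C_1,\ldots,C_{i-1}\ra$ is exactly the indiscernibility required for the points of $C_i$ to serve as iteration images of the measures of $m_n^\#$ lying above the levels already coded by $C_1,\ldots,C_{i-1}$.

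First I would set up the iteration theory, using that $m_n^\#$ exists and is fully iterable: its linear iterations, pushing the relevant measures through the ordinals, produce well-founded proper-class iterates whose critical sequences are club in $\Ord$. The core step is a comparison argument. Reading the clubs as prescribing a putative iteration of $m_n^\#$ — the elements of $C_i$ being the intended critical points at the $i$-th level — I would compare this putative iteration against the genuine iterations of $m_n^\#$. The minimality of $m_n^\#$, together with the reflection hypotheses, should force the comparison to be trivial up to the truncation implicit in forming $\cN$; this simultaneously certifies that the putative iteration is genuine and identifies its direct limit as the desired iterate $\cN$. Since the critical sequences are recovered uniformly from the $C_i$, the iterate $\cN$ belongs to $L[C_1,\ldots,C_n]$, whence $L[C_1,\ldots,C_n]=\cN[C_1,\ldots,C_n]$.

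It then remains to see that this is a forcing extension: to identify a hyperclass forcing $\bP\in\cN$ for which $\la C_1,\ldots,C_n\ra$ is $\cN$-generic, so that $\cN[C_1,\ldots,C_n]=\cN[g]$. The natural candidate for $\bP$ is an iteration of Prikry-type, club-adding forcings, one factor per level, whose conditions are proper classes over $\cN$ — hence the passage to the hyperclass framework. The generic sequence of critical points produced by iterating $m_n^\#$ is precisely what such a forcing adds, in the same spirit as the critical sequence of an iterated measure being Prikry-generic over the direct limit, and as in Welch's single-club analysis \cite{Welch:ClubClassHartigQuantifierModel}. Genericity — that each $C_i$ meets every dense subclass of $\bP$ definable over $\cN$ — should then follow from the indiscernibility of the critical points and the minimality of $m_n^\#$.

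The main obstacle, I expect, is this last step: pinning down the correct hyperclass forcing $\bP$ and proving genericity in full. Two points are delicate. First, the bookkeeping in the comparison that matches the nested clubs $C_1\supseteq\cdots\supseteq C_n$ to the descending hierarchy of $n$-, $(n-1)$-, $\ldots$, $1$-measurables of $m_n^\#$, checking that no unexpected drops occur. Second, and more seriously, showing that the club data genuinely forms a generic filter rather than coding extra information into $L[C_1,\ldots,C_n]$ beyond $\cN$; this is where the $\Sigma_1$-reflection hypotheses must be used to their full strength, guaranteeing that the $C_i$ behave as generic indiscernibles over $\cN$ rather than as an arbitrary predicate. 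Carrying this out uniformly across all $n$ interacting levels, rather than for the single club treated by Welch, is the essential new difficulty beyond the known case.
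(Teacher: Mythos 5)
Your high-level architecture matches the paper's: iterate $m_n^\#$ so that its strictly $i$-measurable cardinals land on the clubs, recover the iterate inside $L[C_1,\ldots,C_n]$ by reading the iteration measures off tails of the $C_i$, and then realize the club data as a Prikry-style generic over the iterate, passing to a hyperclass-forcing framework to make sense of an $\Ord$-length forcing over the truncated model. But the proposal stops exactly where the proof begins, and the mechanisms you gesture at for the deferred steps are not the ones that work. First, no comparison argument is needed or used: since $m_n^\#$ is iterable in $V$, one constructs the linear iteration directly, sending the strictly $i$-measurables onto $\hat C_i$ (in fact onto the limit points $\hat C_i^*$, which matters for recovering the generic later). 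The place where the $\Sigma_1$-stability hypothesis is genuinely consumed is in the bookkeeping: at certain limit stages one runs out of measurables and must ``iterate up'' a higher measure to create more room, and one needs $\Sigma_1$-elementarity of the points of $C_{i+1}$ over $\la V,\in,C_1,\ldots,C_i\ra$ to see that all such auxiliary stages occur strictly below the next element of $C_{i+1}$, so that the $(i+1)$-measurables land exactly where intended and nothing overshoots. Your sketch does not engage with this, which is precisely where the nesting hypothesis does its work.

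Second, and more seriously, genericity does not follow from ``indiscernibility of the critical points and the minimality of $m_n^\#$.'' The forcing is the iteration $\p_{D_n}\ast\cdots\ast\dot\p_{D_1}$ of Fuchs's discrete-product Prikry forcings, one factor per level of measurability, and genericity of the functions $f_i$ (sending each $\alpha\in\hat C_i^*$ to the $\omega$-sequence of elements of $C_i$ limiting up to it) is verified via the Mathias criterion: every choice of measure-one sets in the relevant ground model catches all but finitely many Prikry points. The genuinely new difficulty for $n>1$ --- which you correctly flag as the main obstacle but do not resolve --- is that the later factors are defined over intermediate extensions $M[f_n]\cdots[f_{i+1}]$ with lifted measures, so the Mathias criterion must be checked against measure-one functions living in those extensions rather than in $M$. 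The paper handles this with a dense-embedding lemma: using the Prikry property and the factoring of the product below each coordinate, every measure-one function on $D_i$ in the intermediate extension can be refined coordinatewise by one lying in $M$, so that the ground-model Mathias criterion suffices. Without this lemma (or some substitute), the assertion that the $C_i$ ``behave as generic indiscernibles over $\cN$'' is unsupported, so the proposal has a genuine gap at the central step of the theorem.
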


\section{Preliminaries}\label{sec:preliminaries}
Recall that, for a cardinal $\alpha$, $H_\alpha$ is the collection of
all sets $x$ with transitive closure of size less than $\alpha$. If $\alpha$ is regular, then
$H_\alpha$ satisfies $\ZFC^-$ ($\ZFC$ without the powerset axiom). But
for singular $\alpha$, $H_\alpha$ may fail to satisfy even $\Sigma_2$-collection.

The following proposition is standard.
\begin{prop}\label{prop:hereditaryElementarity} Suppose $\alpha$ and $\beta$ are infinite cardinals.
\begin{itemize}
\item[(1)] $H_\alpha\prec_{\Sigma_1} V$.
\item[(2)] If $H_\alpha\prec_{\Sigma_m} V$, then $\Sigma_m$-collection holds in $H_\alpha$. In particular, every $H_\alpha$ satisfies $\Sigma_1$-collection.
\item[(3)] If $H_\alpha\prec_{\Sigma_m} H_\beta$ and $\Sigma_m$-collection holds in $H_\beta$, then it also holds in $H_\alpha$.
\end{itemize}
\end{prop}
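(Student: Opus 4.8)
The plan is to handle the three parts in order, relying throughout on two standard features of the L\'evy hierarchy: that $\Delta_0$ (bounded) formulas are absolute between any transitive class and $V$, and that for $m\geq 1$ the classes $\Sigma_m$ and $\Pi_m$ are closed under bounded quantification. I note at the outset that each $H_\alpha$ is transitive, so $\Delta_0$-absoluteness applies to it and to its transitive Mostowski images.

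For (1), the upward direction is immediate: a witness to a $\Sigma_1$ formula that already lies in $H_\alpha$ remains a witness in $V$ by $\Delta_0$-absoluteness. For the downward direction, suppose $\varphi(x,y)$ is $\Delta_0$, $a\in H_\alpha$, and $V\models\exists y\,\varphi(a,y)$. I would fix a regular $\theta$ large enough that some witness $y_0$ together with $a$ lies in $H_\theta$, so that $H_\theta\models\varphi(a,y_0)$, and then choose $X\prec H_\theta$ with $\mathrm{TC}(\{a\})\cup\{y_0\}\subseteq X$ and $|X|=|\mathrm{TC}(\{a\})|+\aleph_0<\alpha$; this is possible since $|\mathrm{TC}(\{a\})|<\alpha$ and $\alpha$ is an uncountable cardinal, and it is exactly where the smallness of the transitive closure of $a$ is used. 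Collapsing $X$ by the Mostowski isomorphism fixes $a$, because $\mathrm{TC}(\{a\})$ is a transitive subset of $X$, so the image $\bar y_0$ of $y_0$ satisfies $\bar X\models\varphi(a,\bar y_0)$; since $\bar X$ is transitive and $\varphi$ is $\Delta_0$ we get $V\models\varphi(a,\bar y_0)$, and $|\mathrm{TC}(\bar y_0)|\leq|\bar X|<\alpha$ places $\bar y_0\in H_\alpha$, as required.

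For (2), let $\varphi$ be $\Sigma_m$ with parameters $\vec p\in H_\alpha$, fix $a\in H_\alpha$, and assume $H_\alpha\models\forall x\in a\,\exists y\,\varphi(x,y,\vec p)$. The key complexity observation is that prefixing $\varphi$ by the unbounded quantifier $\exists y$ and then the bounded quantifier $\forall x\in a$ keeps the formula $\Sigma_m$, so by $H_\alpha\prec_{\Sigma_m}V$ the statement holds in $V$. Full collection in $V$ then yields a set $c$ with $V\models\forall x\in a\,\exists y\in c\,\varphi$, whence $V\models\exists c\,\forall x\in a\,\exists y\in c\,\varphi$. This last sentence is again $\Sigma_m$ (bounded quantifiers over a $\Sigma_m$ matrix, preceded by one unbounded $\exists$), so $\Sigma_m$-elementarity pulls it back down to $H_\alpha$, producing a collecting set $b\in H_\alpha$ with $H_\alpha\models\forall x\in a\,\exists y\in b\,\varphi$, which is precisely $\Sigma_m$-collection. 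Taking $m=1$ and invoking part (1) gives the ``in particular'' clause.

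Part (3) is the same argument with $H_\beta$ replacing $V$: the hypothesis statement, being $\Sigma_m$, transfers up to $H_\beta$ via $H_\alpha\prec_{\Sigma_m}H_\beta$; the assumed $\Sigma_m$-collection in $H_\beta$ supplies a bounding set $c\in H_\beta$; and the resulting $\Sigma_m$ sentence $\exists c\,\forall x\in a\,\exists y\in c\,\varphi$ transfers back down to $H_\alpha$. I expect no serious obstacle here, as these are standard facts; the only points that genuinely need care are the L\'evy-complexity bookkeeping in (2) and (3), namely keeping the collecting statement within $\Sigma_m$, which is exactly why $m\geq 1$ is required, and, in (1), ensuring that the elementary substructure can be taken of size strictly below $\alpha$, which is the reason the $\Sigma_1$-elementarity argument is run for uncountable $\alpha$.
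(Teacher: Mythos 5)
Your proof is correct and follows essentially the same route as the paper's: a Mostowski-collapse argument for (1) (the paper collapses a $\Sigma_1$-elementary submodel of $V$ directly rather than of a large $H_\theta$, a cosmetic difference, and your explicit remark that uncountability of $\alpha$ is needed is a fair point), and for (2) and (3) the same transfer-up/collect/transfer-down scheme, where only the collection-free direction of the normal-form equivalence is used on the way down. The one step to rephrase is the upward transfer of the hypothesis: you move $\forall x\in a\,\exists y\,\varphi$ from $H_\alpha$ to $V$ by declaring the whole formula $\Sigma_m$, but the equivalence of a boundedly quantified $\Sigma_m$ formula with a prenex $\Sigma_m$ formula is exactly an instance of the collection you are trying to establish in $H_\alpha$, so this is mildly circular; the easy repair (implicit in the paper) is to transfer the genuinely $\Sigma_m$ statement $\exists y\,\varphi(x,y)$ upward for each $x\in a$ separately and only form the collecting set in $V$, where full collection is available.
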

\begin{proof}
Let's prove (1), which is a classical fact attributed to \Levy. Suppose $\exists x\,\varphi(x,a)$ holds in $V$, where $\varphi(x,a)$ is a $\Delta_0$-formula and $a\in H_\alpha$. We can assume without loss that $a$ is transitive. Let $X\prec_{\Sigma_1} V$ be a $\Sigma_1$-elementary substructure of size $|a|$ with $a\cup \{a\}\subseteq X$, and let $M$ be the Mostowski collapse of $X$. Since $M$ is transitive and has size $|a|$, it is in $H_\alpha$. Also, by elementarity, $M$ satisfies $\exists x\,\varphi(x,a)$. So there is $b\in M$ such that $M\models\varphi(b,a)$. But since $M\subseteq H_\alpha$ is transitive and $\varphi(x,y)$ is a $\Delta_0$-assertion, it follows that $H_\alpha$ satisfies $\varphi(b,a)$ as well.

To prove $(2)$, suppose that $\varphi(x,y)$ is a $\Sigma_m$-formula and fix $a\in H_\alpha$. In $V$, by collection, there is a set $b$ such that $\forall x\in a\,\exists y\in b\, \varphi(x,y)$. So $V$ satisfies $$\exists z\,\forall x\in a\,\exists y\in z\,\varphi(x,y),$$ which is canonically equivalent in $V$ to a $\Sigma_m$-assertion $\psi(a)$. By the available elementarity, $\psi(a)$ holds in $H_\alpha$, but $\psi(a)$ clearly implies $\exists z\,\forall x\in a\,\exists y\in z\,\varphi(x,y)$ even in the absence of collection, and so $H_\alpha$ has a collecting set for $a$ and $\varphi$. An analogous argument shows $(3)$.
\end{proof}
It follows immediately from Proposition~\ref{prop:hereditaryElementarity}~(1) that the strong limit cardinals of $V$ are definable in the stable core.
\begin{cor}\label{cor:stronglimDefinable}
The class of strong limit cardinals of $V$ is definable in the stable core $\la L[S],\in,S\ra$. Indeed, $\alpha$ is a strong limit cardinal if and only if there is a cardinal $\beta$ such that $(\alpha,\beta)\in S_1$.
\end{cor}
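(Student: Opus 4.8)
The plan is to unwind the definitions and reduce both directions of the biconditional to Proposition~\ref{prop:hereditaryElementarity}. The first observation I would make is that a cardinal is $1$-good exactly when it is a strong limit cardinal: by definition $1$-goodness asks for a strong limit cardinal $\alpha$ together with $\Sigma_1$-collection in $H_\alpha$, and the latter holds automatically for every infinite cardinal by Proposition~\ref{prop:hereditaryElementarity}~(2). Since membership $(\alpha,\beta)\in S_1$ requires $\alpha$ to be $1$-good, this immediately yields the backward direction: if some $\beta$ witnesses $(\alpha,\beta)\in S_1$, then $\alpha$ is $1$-good and hence a strong limit cardinal.

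For the forward direction, suppose $\alpha$ is a strong limit cardinal and choose any strong limit cardinal $\beta\geq\alpha$; such $\beta$ exist since the strong limit cardinals form a proper class, and one may even take $\beta=\alpha$. Both $\alpha$ and $\beta$ are then $1$-good, so it remains only to check $H_\alpha\prec_{\Sigma_1}H_\beta$. This is a standard sandwiching argument: by Proposition~\ref{prop:hereditaryElementarity}~(1) both $H_\alpha$ and $H_\beta$ are $\Sigma_1$-elementary in $V$, so for every $\Sigma_1$-formula $\varphi$ with parameters in $H_\alpha\subseteq H_\beta$ we have $H_\alpha\models\varphi\iff V\models\varphi\iff H_\beta\models\varphi$, and hence $H_\alpha\prec_{\Sigma_1}H_\beta$. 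Thus $(\alpha,\beta)\in S_1$, completing the biconditional.

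For the definability claim I would observe that the formula $\exists\beta\,(1,\alpha,\beta)\in S$ lies in the language $\{\in,S\}$ of the stable core, with its quantifier ranging over ordinals. Because $S$ is a predicate of the structure $\la L[S],\in,S\ra$ and $L[S]$ has the same ordinals as $V$, this formula is absolute between $V$ and $\la L[S],\in,S\ra$; in particular there is no need to express separately that $\beta$ is a cardinal, since that constraint is already built into $S$. Combined with the biconditional above, this shows that $\exists\beta\,(1,\alpha,\beta)\in S$ defines the class of strong limit cardinals of $V$ in the stable core. The only step that warrants any care is this final transfer to $\la L[S],\in,S\ra$, and it is immediate once one notes that the defining formula speaks only about membership in $S$ and quantification over ordinals, both of which are absolute.
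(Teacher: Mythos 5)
Your proof is correct and follows exactly the route the paper intends: the paper states the corollary as an immediate consequence of Proposition~\ref{prop:hereditaryElementarity}, and your argument simply spells out the two observations involved (that $1$-goodness reduces to being a strong limit via part (2), and that $H_\alpha\prec_{\Sigma_1}H_\beta$ is automatic by sandwiching through $V$ via part (1)). Nothing further is needed.
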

The stable core can also define, for each $n$, the class club $C_n$ of all strong limit cardinals $\alpha$ such that $H_\alpha\prec_{\Sigma_n}V$.
\begin{prop}\label{prop:SigmaElementaryClubs}
For every $n<\omega$, $C_n$ is definable in the stable core.
\end{prop}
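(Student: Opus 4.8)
The plan is to prove the proposition by induction on $n$, defining $C_{n+1}$ from the predicate $S$ together with the class $C_n$, which is definable in $\la L[S],\in,S\ra$ by the induction hypothesis. The base case is immediate: by Proposition~\ref{prop:hereditaryElementarity}~(1) we have $H_\alpha\prec_{\Sigma_1}V$ for every infinite cardinal $\alpha$, so $C_1$ is exactly the class of strong limit cardinals, which is definable in the stable core by Corollary~\ref{cor:stronglimDefinable}, namely $\alpha\in C_1$ if and only if $\exists\beta\,(\alpha,\beta)\in S_1$. Throughout I will use that standard reflection (L\"owenheim--Skolem together with the reflection theorem) shows each $C_k$ is an unbounded proper class.

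For the inductive step I would establish the characterization
\[\alpha\in C_{n+1}\iff \{\beta\in C_n\mid(\alpha,\beta)\in S_{n+1}\}\text{ is unbounded in }\Ord.\]
Granting this, $C_{n+1}$ is definable in $\la L[S],\in,S\ra$: the slice $S_{n+1}$ is definable from $S$, the class $C_n$ is definable by the induction hypothesis, and unboundedness is expressed by the first-order class formula $\forall\delta\,\exists\beta>\delta\,(\beta\in C_n\wedge(\alpha,\beta)\in S_{n+1})$. Note that a single witness $(\alpha,\beta)\in S_{n+1}$ with $\beta\in C_n$ already forces $\alpha$ to be $(n+1)$-good and, via $H_\alpha\prec_{\Sigma_n}H_\beta\prec_{\Sigma_n}V$, to satisfy $\alpha\in C_n$; thus the right-hand side automatically picks out a subclass of $C_n$ consisting of strong limit cardinals, as it must.

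Both implications rest on the auxiliary fact that if $\alpha,\beta\in C_m$ with $\alpha\le\beta$ then $(\alpha,\beta)\in S_m$: a $\Sigma_m$-formula with parameters in $H_\alpha\subseteq H_\beta$ holds in $H_\alpha$ iff it holds in $V$ iff it holds in $H_\beta$, and both cardinals are $m$-good by Proposition~\ref{prop:hereditaryElementarity}~(2). For the forward direction, if $\alpha\in C_{n+1}$ then, since $C_{n+1}\subseteq C_n$, this fact with $m=n+1$ shows every $\beta\in C_{n+1}$ with $\beta\ge\alpha$ lies in $C_n$ and satisfies $(\alpha,\beta)\in S_{n+1}$; unboundedness of $C_{n+1}$ then gives the right-hand side. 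For the backward direction, assume the displayed class is unbounded and let $\varphi(a)\equiv\exists x\,\psi(x,a)$ be $\Sigma_{n+1}$ with $\psi\in\Pi_n$ and $a\in H_\alpha$. If $H_\alpha\models\varphi(a)$, fix a witness $b$ and any $\beta$ from the class; then $H_\alpha\prec_{\Sigma_{n+1}}H_\beta$ gives $H_\beta\models\psi(b,a)$, and since $\psi\in\Pi_n$ and $\beta\in C_n$ yields $H_\beta\prec_{\Sigma_n}V$ (so $\Pi_n$-formulas transfer upward), $V\models\varphi(a)$. Conversely, if $V\models\varphi(a)$, fix a witness $x_0\in H_\gamma$ and, using unboundedness, some $\beta>\gamma$ in the class; then $x_0\in H_\beta$, the $\Pi_n$-matrix transfers downward to give $H_\beta\models\varphi(a)$, and $H_\alpha\prec_{\Sigma_{n+1}}H_\beta$ yields $H_\alpha\models\varphi(a)$. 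Hence $H_\alpha\prec_{\Sigma_{n+1}}V$, i.e. $\alpha\in C_{n+1}$.

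The hard part, and the reason the induction is run through the nested clubs rather than directly against $V$, is exactly the transfer of the $\Pi_n$-matrix $\psi$ between $H_\beta$ and $V$. For $n\ge 1$ it is \emph{false} that $\Sigma_{n+1}$-elementarity into unboundedly many $H_\beta$ already gives $\Sigma_{n+1}$-elementarity into $V$; one genuinely needs the witnessing $\beta$ to satisfy $H_\beta\prec_{\Sigma_n}V$, that is, to lie in $C_n$, in order to move $\psi$ up and down between $H_\beta$ and $V$. Restricting the class quantifier over $\beta$ to the previously defined $C_n$ is what makes the argument close, and it is the step I would be most careful to get right.
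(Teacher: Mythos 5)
Your proof is correct, but it takes a genuinely different route from the paper's. The paper characterizes $C_n$ directly and non-inductively as the class of $\alpha$ such that $(\alpha,\beta)\in S_n$ for cofinally many $\beta$, verifies this for $C_2$, and asserts that ``a similar argument'' handles each $C_n$; you instead run an induction and characterize $C_{n+1}$ as the class of $\alpha$ for which $\{\beta\in C_n\mid (\alpha,\beta)\in S_{n+1}\}$ is unbounded, using the previously defined $C_n$ to restrict the witnesses. What your restriction buys is exactly what you identify: with $\beta\in C_n$ you have $H_\beta\prec_{\Sigma_n}V$, so the $\Pi_n$-matrix of a $\Sigma_{n+1}$-formula moves freely between $H_\beta$ and $V$, and the verification is uniform in $n$. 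One correction, though: your claim that for all $n\ge 1$ it is \emph{false} that $\Sigma_{n+1}$-elementarity into unboundedly many $H_\beta$ gives $\Sigma_{n+1}$-elementarity into $V$ overshoots. For $\Sigma_2$ this is precisely what the paper proves (the only transfer needed between $H_\beta$ and $V$ is at the $\Sigma_1$/$\Pi_1$ level, which holds for every $H_\beta$), and the same bookkeeping shows the unrestricted characterization also works for $\Sigma_3$, since a $\Sigma_3$ (resp.\ $\Pi_3$) assertion true in $H_\alpha$ reduces, after fixing witnesses, to transferring only $\Pi_1$ (resp.\ $\Sigma_1$) facts upward from $H_\beta$. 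The step that genuinely requires care first appears at $\Sigma_4$, where one would need $\Pi_2$-facts about parameters in $H_\beta\setminus H_\alpha$ to persist from $H_\beta$ to $V$; there your restricted quantifier (or some substitute for it) earns its keep, and your version is the safer one to run for all $n$ simultaneously. Also note that only your forward implication uses the auxiliary fact that any two members of $C_m$ pair into $S_m$; the backward implication does not.
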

\begin{proof}
The class club $C_1$ is definable because it is precisely the class of all strong limit cardinals. Let's argue that $C_2$ is precisely the collection of all $\alpha$ such that for cofinally many $\beta$, we have $\la \alpha,\beta\ra\in S_2$. Suppose that $\alpha$ is a strong limit cardinal such that $H_\alpha\prec_{\Sigma_2} V$. Then whenever $\beta$ is a strong limit cardinal and $H_\beta\prec_{\Sigma_2}V$, we have $\la \alpha,\beta\ra\in S_2$ since both satisfy $\Sigma_2$-collection by Proposition~\ref{prop:hereditaryElementarity}~(2). Now suppose that $\alpha$ is a strong limit cardinal such that for cofinally many $\beta$, $H_\alpha\prec_{\Sigma_2}H_\beta$. We will argue that $H_\alpha\prec_{\Sigma_2}V$. Suppose $H_\alpha\models\forall x\,\exists
y\,\varphi(x,y,a)$, where $\varphi$ is $\Delta_0$ and $a \in H_\alpha$. Given a set $b$, by assumption, there is a $\beta$ with $b\in H_\beta$ such that $H_\alpha\prec_{\Sigma_2}H_\beta$. It follows that $H_\beta\models\exists y\varphi(b,y,a)$, and hence so does $V$. A similar argument now shows that each $C_n$ is the collection of all $\alpha$ such that for cofinally many $\beta$, $\la\alpha,\beta\ra\in S_n$.
\end{proof}

Given a cardinal $\alpha$, let $H^{\lt\alpha}$ denote the relation consisting of pairs $\la \beta,H_\beta\ra$ for $\beta<\alpha$.
\begin{prop}\label{prop:reducingElementarity}
  For $m\geq 1$ and strong limit cardinals $\alpha$ and $\beta$, $H_\alpha\prec_{\Sigma_{m+1}}H_\beta$ if and only if
  $\la H_\alpha,\in,H^{\lt\alpha}\ra\prec_{\Sigma_m} \la
  H_\beta,\in,H^{\lt\beta}\ra$.
\end{prop}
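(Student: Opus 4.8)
The plan is to reduce the biconditional to a pair of uniform translation lemmas relating formulas over the two languages, and then to chase these translations through the elementarity hypotheses. Write $\hat H_\alpha = \la H_\alpha,\in,H^{\lt\alpha}\ra$ and let $P$ be the binary predicate symbol interpreted by $H^{\lt\alpha}$. The two lemmas I would establish, uniformly in all strong limit cardinals $\alpha$ and for every $m\geq 1$, are: (i) every $\Sigma_{m+1}$ formula $\psi$ over $\{\in\}$ is equivalent over $H_\alpha$ to a fixed $\Sigma_m$ formula $\psi^-$ over $\{\in,P\}$ evaluated in $\hat H_\alpha$; and (ii) every $\Sigma_m$ formula $\varphi$ over $\{\in,P\}$ is equivalent over $\hat H_\alpha$ to a fixed $\Sigma_{m+1}$ formula $\varphi^+$ over $\{\in\}$ evaluated in $H_\alpha$. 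Granting these, the proposition follows quickly: the predicates cohere, since $H^{\lt\alpha}=H^{\lt\beta}\restrict H_\alpha$ and $\alpha\leq\beta$, so $\hat H_\alpha$ is a substructure of $\hat H_\beta$; and then for a $\Sigma_m$ formula $\varphi$ and parameters $\vec a\in H_\alpha$, I would chain $\hat H_\alpha\models\varphi(\vec a)\Leftrightarrow H_\alpha\models\varphi^+(\vec a)$, apply $H_\alpha\prec_{\Sigma_{m+1}}H_\beta$ to the $\Sigma_{m+1}$ formula $\varphi^+$, and chain back through $\hat H_\beta$, giving $\hat H_\alpha\prec_{\Sigma_m}\hat H_\beta$; symmetrically, using (i), $\Sigma_m$-elementarity of the expanded structures yields $\Sigma_{m+1}$-elementarity of the plain ones.

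The two translations I would prove by induction on $m$, and the content lives in two elementary observations about the $H$-hierarchy. First, by comparability of cardinalities under \AC, the relation ``$x\in H_\gamma$'' (i.e.\ $|\TC(x)|<\gamma$) is $\Delta_1$ and ``$z=H_\gamma$'' is $\Pi_1$, both uniformly; this lets me eliminate the predicate $P$, replacing each atom $P(\gamma,z)$ by its definition at the cost of a single quantifier alternation, which drives lemma (ii). Second, since $\alpha$ is a strong limit cardinal we have $H_\alpha=\bigcup_{\gamma<\alpha}H_\gamma$ with each $H_\gamma\in H_\alpha$, so an unbounded quantifier $\exists x\,(\cdots)$ over $H_\alpha$ can be rewritten as $\exists\gamma\,\exists z\,(P(\gamma,z)\wedge\exists x\in z\,(\cdots))$ over $\hat H_\alpha$; here the membership ``$x\in z$'' has become bounded, so an entire $\Sigma_{m+1}$ prefix collapses onto a $\Sigma_m$ prefix with the predicate, which drives lemma (i). Throughout I would use the $\Sigma_1$-collection available in every $H_\alpha$ by Proposition~\ref{prop:hereditaryElementarity}~(2) to push the bounded quantifiers introduced above through $\Sigma_1$ and $\Pi_1$ matrices without raising their complexity.

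The main obstacle is the exact complexity bookkeeping in (i) and (ii): I must verify that the predicate contributes precisely one extra alternation and no more, and that the bounded quantifiers introduced by the hierarchy never inflate the count---this is exactly where $\Sigma_1$-collection and the $\Delta_1$-ness of ``$x\in H_\gamma$'' are essential. It is also here that the hypothesis $m\geq 1$ enters: the reduction bottoms out at the equivalence of $\Sigma_2$ over $H_\alpha$ with $\Sigma_1$ over $\hat H_\alpha$, since below $\Sigma_1$ in the expanded language there is no alternation left to absorb the cofinal quantifier over the hierarchy. With the translations in hand and the coherence of the predicates noted above, the biconditional of the proposition is immediate.
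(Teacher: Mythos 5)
Your proposal is correct and follows essentially the same route as the paper's proof: your translation lemma (ii) (eliminating the predicate at the cost of one alternation) is the paper's forward direction via the $\Pi_1$-definability and amenability of $H^{\lt\alpha}$, and your lemma (i) (absorbing one quantifier block into a bounded quantifier over a set named by the predicate) is the paper's backward direction, which it carries out explicitly for the $\Sigma_2/\Pi_2$ base case using $H_\gamma\prec_{\Sigma_1}H_\alpha$ and the strong limit hypothesis. Your bookkeeping via $\Sigma_1$-collection and the $\Delta_1$-ness of ``$x\in H_\gamma$'' is a more explicit rendering of the same induction the paper leaves implicit.
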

\begin{proof}

For the forward direction, observe that the relation
  $H^{\lt\alpha}$ is $\Pi_1$-definable and amenable over $H_\alpha$, which implies that predicates which are $\Sigma_m$-definable over $\la H_\alpha,\in,H^{\lt\alpha}\ra$ are $\Sigma_{m+1}$-definable over $H_\alpha$. So let's focus on the
  backward direction. First, observe that a $\Sigma_2$-formula
  $\exists x\,\forall y\,\varphi(x,y,a)$ holds in $H_\alpha$ if and
  only if the $\Sigma_1$-formula
  $$\exists z\,[z=(\beta,H_\beta)\wedge \exists x\in H_\beta\,\forall
  y\in H_\beta\,\varphi(x,y,a)]$$ holds in
  $\la H_\alpha,\in,H^{\lt\alpha}\ra$, and a $\Pi_2$-formula
  $\forall x\,\exists y\,\varphi(x,y,a)$ holds in $H_\alpha$ if and
  only if the $\Pi_1$-formula
  $$\forall z[z=(\beta,H_\beta)\rightarrow \forall x\in H_\beta\,\exists
  y\in H_\beta\,\varphi(x,y,a)]$$ holds in
  $\la H_\alpha,\in,H^{\lt\alpha}\ra$. Both equivalences follow
  from Proposition~\ref{prop:hereditaryElementarity} (1) and the fact that $\alpha$ and $\beta$ are strong limits. Thus, the
  complexity of any assertion is reduced by 1.
\end{proof}
\begin{prop}\label{prop:liftingElementarity}
Suppose $1\leq m<\omega$, $\alpha$ and $\beta$ are strong limit cardinals, $\p\in H_\alpha$ is a partial order, and $G\subseteq \p$ is $V$-generic. For $(1)$ and $(2)$, suppose additionally that $H_\alpha\models\Sigma_m$-collection.
\begin{itemize}
\item[(1)] The Definability Lemma and Truth Lemma for $\Sigma_m$-formulas hold for $\p$ in $H_\alpha$. Indeed, if $\varphi(\bar x)$ is a $\Sigma_m$-formula, then the relation $p\forces\varphi(\bar x)$ is also $\Sigma_m$ in $H_\alpha$.
\item[(2)] $H_\alpha\prec_{\Sigma_m} H_\beta$ if and only if $$H_\alpha^{V[G]}=H_\alpha[G]\prec_{\Sigma_m}H_\beta[G]=H_\beta^{V[G]}.$$
\item[(3)] $H_\alpha$ satisfies $\Sigma_m$-collection if and only if $H_\alpha[G]$ satisfies $\Sigma_m$-collection.
\end{itemize}
\end{prop}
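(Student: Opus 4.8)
The plan is to prove the three parts of Proposition~\ref{prop:liftingElementarity} by a simultaneous induction on $m$, exploiting the standard forcing machinery together with the elementarity-collection interplay already established in Proposition~\ref{prop:hereditaryElementarity}. The key point throughout is that, since $\p \in H_\alpha$ and $\alpha$ is a strong limit cardinal, every $\p$-name for an element of $H_\alpha[G]$ can itself be coded by an element of $H_\alpha$, so that $H_\alpha[G] = H_\alpha^{V[G]}$; this is where the strong-limit hypothesis does its work, guaranteeing that the collection of names of hereditary size ${<}\alpha$ is closed under the relevant operations and has the right size.

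**First I would** establish (1). The Definability and Truth Lemmas for $\p$ over $V$ are standard, so the content is the complexity bookkeeping: that, under $\Sigma_m$-collection in $H_\alpha$, the forcing relation $p \forces \varphi(\bar x)$ for $\Sigma_m$-formulas $\varphi$ is itself $\Sigma_m$-definable over $H_\alpha$, and that it correctly computes satisfaction in $H_\alpha[G]$. I would argue by induction on the logical structure of $\varphi$: atomic and Boolean cases reduce to the forcing relation for $\Delta_0$-formulas, which is absolute and low-complexity; the crucial step is the existential quantifier, where one passes from $p \forces \exists x\,\psi(x,\bar a)$ to the assertion that densely many $q \leq p$ force $\psi(\sigma,\bar a)$ for some name $\sigma \in H_\alpha$. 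Here $\Sigma_m$-collection in $H_\alpha$ is exactly what is needed to collect the witnessing names into a single set inside $H_\alpha$, keeping the complexity at $\Sigma_m$ rather than forcing an unwanted jump. The Truth Lemma then follows because genericity meets the relevant dense classes, which are definable over $H_\alpha$ and hence genuinely dense for a $V$-generic $G$.

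**Next,** part (2) follows from (1) by a routine back-and-forth: to show $H_\alpha \prec_{\Sigma_m} H_\beta \implies H_\alpha[G] \prec_{\Sigma_m} H_\beta[G]$, take a $\Sigma_m$-formula true in $H_\alpha[G]$ about parameters named by $\sigma_1,\dots,\sigma_k \in H_\alpha$; by the Truth Lemma some $p \in G$ forces it, by (1) this forcing statement is $\Sigma_m$ over $H_\alpha$ with parameters in $H_\alpha$, so elementarity transports it up to $H_\beta$, and the Truth Lemma in $H_\beta[G]$ reinterprets it as satisfaction there. The converse direction is symmetric, recovering a forcing statement over $H_\alpha$ from satisfaction in $H_\alpha[G]$ and pushing it back down through $H_\beta$ via the assumed elementarity of the extensions. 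For part (3), I would note that a collecting set for a $\Sigma_m$-instance in $H_\alpha[G]$ can be read off from a collecting set for the associated forced $\Sigma_m$-instance over $H_\alpha$ (using (1) to keep complexity controlled), and conversely; alternatively one observes that $\Sigma_m$-collection is itself expressible in a way that (2) already transfers, so (3) can be folded into the same inductive step.

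**The main obstacle** I anticipate is the complexity accounting in (1): ensuring that the quantifier over names does not raise the complexity of the forcing relation above $\Sigma_m$. The temptation is to quantify over \emph{all} $\p$-names, which is a proper-class or at least unbounded quantifier, but the strong-limit hypothesis confines the relevant names to $H_\alpha$, and $\Sigma_m$-collection lets one replace an apparent $\Sigma_{m+1}$ search for witnessing names by a bounded $\Sigma_m$ search over a single collected set. Getting this synchronization exactly right — so that (1) feeds (2) and (3) at the same level $m$ rather than at $m+1$ — is the delicate part, and it is precisely why the proposition bundles the Definability/Truth Lemmas together with the elementarity and collection transfer.
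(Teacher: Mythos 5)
Your outline of (1) and of the \emph{forward} directions of (2) and (3) matches the paper's argument: the forcing relation is defined internally to $H_\alpha$, $\Sigma_m$-collection is used to keep the forcing relation for $\Sigma_m$-formulas at complexity $\Sigma_m$ (the paper phrases this as needing collection to push bounded quantifiers into normal form), and the forward transfer is the routine ``some $p\in G$ forces it, elementarity moves the forced statement up'' argument.

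The gap is in the backward directions of (2) and (3), which you dismiss as ``symmetric'' or foldable into the same step. They are not symmetric: to go from $H_\alpha[G]\prec_{\Sigma_m}H_\beta[G]$ back to $H_\alpha\prec_{\Sigma_m}H_\beta$ you must express a $\Sigma_m$-fact about the \emph{ground model} $H_\alpha$ as a fact about the \emph{extension} $H_\alpha[G]$, and the forcing relation only runs the other way. The paper's proof uses the Laver--Woodin definability of the ground model in its forcing extension. Moreover, used naively this definition is $\Delta_2$, which would bump a $\Sigma_m$-assertion about $H_\alpha$ to something worse than $\Sigma_m$ over $H_\alpha[G]$ and the assumed elementarity would not suffice. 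The paper's resolution is the two-step device you are missing: first, by Proposition~\ref{prop:reducingElementarity}, a $\Sigma_m$-assertion over $H_\alpha$ becomes a $\Sigma_{m-1}$-assertion over $\la H_\alpha,\in,H^{\lt\alpha}\ra$ (this is where the strong-limit hypothesis really earns its keep, via the definable hierarchy of $H_\gamma$'s); second, $\la H_\alpha,\in,H^{\lt\alpha}\ra$ is $\Delta_1$-definable in $\la H_\alpha[G],\in,(H^{\lt\alpha})^{V[G]}\ra$, so the assertion stays $\Sigma_{m-1}$ over the expanded extension structure, and Proposition~\ref{prop:reducingElementarity} applied to the extension converts the hypothesis $H_\alpha[G]\prec_{\Sigma_m}H_\beta[G]$ into $\Sigma_{m-1}$-elementarity of the expanded structures, closing the loop. (The case $m=1$ is handled separately by downward absoluteness of cardinals.) The backward direction of (3) needs the same ground-model definability plus one more idea absent from your sketch: the collecting set $b$ produced in $H_\alpha[G]$ must be replaced by a ground-model set, which is done by covering $b\cap V$ with a set $\bar b\in V$ of size less than $\alpha$ using that $\p$ is small. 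Without these ingredients the backward implications do not go through.
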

\begin{proof}
  The argument for (1) actually works for all cardinals $\alpha$ and
  $\beta$, not just strong limits. We argue that the standard definition of the forcing
  relation works in $H_\alpha$. Suppose, for instance, that $H_\alpha$
  satisfies $p\forces\sigma=\tau$ for $\p$-names $\sigma,\tau \in
  H_\alpha$ and let $H\subseteq \p$ be
  $V$-generic with $p\in H$. The relation $p\forces\sigma=\tau$ is a
  $\Sigma_1$-assertion stating that a tree exists witnessing the
  recursive definition of $\sigma=\tau$ in terms of names of lower
  rank (in fact, the assertion is $\Delta_1$ because we can say ``for
  every tree obeying the recursive definition..."). So by
  $\Sigma_1$-elementarity, $p\forces\sigma=\tau$ holds in $V$, and
  hence $\sigma_H=\tau_H$. Conversely, suppose that $\sigma_H=\tau_H$
  for some $V$-generic filter $H\subseteq \p$. Then there is $p\in H$ such
  that $p\forces \sigma=\tau$, and hence, by $\Sigma_1$-elementarity,
  $p\forces\sigma=\tau$ holds in $H_\alpha$ as well. The remainder of
  the argument is by induction on the complexity of formulas. For
  instance, let's argue for negations. Suppose that the standard
  definition of the forcing relation holds in $H_\alpha$ for a formula
  $\varphi$. By definition of the forcing relation,
  $p\forces\neg\varphi$ if for every $q\leq p$, $q$ does not force
  $\varphi$, but clearly this holds in $H_\alpha$ if and only if it
  holds $V$ provided that they agree on what it means for $q$ to force
  $\varphi$, which is the inductive assumption.

  The argument that the definition of the forcing relation for a
  $\Sigma_m$-formula is itself $\Sigma_m$ is also
  standard. The collection assumption is required to make sure that a formula is equivalent to its normal form where all the bounded quantifiers are pushed to the back. The argument above already shows that for formulae of the
  form ``$\sigma=\tau$'' the forcing relation is $\Delta_1$. Let's
  argue for instance that for $\Delta_0$-formulas, the complexity of
  the forcing relation is $\Delta_1$. Say
  $p\forces \exists x\in\sigma\,\varphi(x,\sigma)$, where
  $\varphi(x,y)$ is a $\Delta_0$-formula and by induction
  $q\forces\varphi(x,y)$ is a $\Delta_1$-relation. Then
  $p\forces\exists x\in\sigma\,\varphi(x,\sigma)$ holds if and only if
  for every $q\leq p$, there is $r\leq q$ and
  $\tau\in \text{dom}(\sigma)$ such that
  $r\forces\varphi(\tau,\sigma)$, and of course, quantification over
  elements of $\p$ is obviously bounded.

Now let's prove (2). We start with the forward direction, which is standard. Suppose that \hbox{$H_\alpha\prec_{\Sigma_m} H_\beta$}. Clearly, since $\p\in H_\alpha$, we have $H_\alpha[G]=H_\alpha^{V[G]}$ and similarly for $H_\beta$. If a $\Sigma_m$-assertion $\varphi$ holds in $H_\alpha[G]$, then there is some $p\in G$ such that $p\forces\varphi$ holds in $H_\alpha$, which is also a $\Sigma_m$-assertion by (1), and so $p\forces\varphi$ holds in $H_\beta$, meaning that $H_\beta[G]$ satisfies $\varphi$.

Next, let's prove the backward direction. Suppose that
$H_\alpha[G]\prec_{\Sigma_m}H_\beta[G]$. The argument for $m=1$ is
trivial since if $\alpha$ and $\beta$ are cardinals in $V[G]$, then
they are also obviously cardinals in $V$, and so the result follows by
Proposition~\ref{prop:hereditaryElementarity} (1). So suppose that
$m\geq 2$. Since $\p\in H_\alpha$, $\alpha$ remains a strong limit in
$V[G]$. Thus, $H_\alpha[G]=H_\alpha^{V[G]}$ has a definable hierarchy consisting of
$H_\beta^{V[G]}$ for regular $\beta<\alpha$. The existence of such a
hierarchy suffices for the standard $\Delta_2$-definition of the
ground model in a forcing extension (due independently to
Woodin~\cite{woodin:groundmodel} and Laver~\cite{laver:groundmodel}) to
go through, so that $H_\alpha$ is $\Delta_2$-definable in
$H_\alpha[G]$. Indeed, examining the definition shows that
$\la H_\alpha,\in, H^{\lt\alpha}\ra$ is $\Delta_1$-definable in
\hbox{$\la H_\alpha[G],\in, (H^{\lt\alpha})^{V[G]}\ra$}. Now suppose that
$H_\alpha$ satisfies a $\Sigma_m$-assertion $\varphi(x,a)$, and let
$\varphi^*(x,a)$ be the equivalent $\Sigma_{m-1}$-assertion which
holds in $\la H_\alpha,\in, H^{\lt\alpha}\ra$. Since
$\la H_\alpha,\in,H^{\lt\alpha}\ra$ is $\Delta_1$-definable in
$\la H_\alpha[G],\in, (H^{\lt\alpha})^{V[G]}\ra$, there is a
$\Sigma_{m-1}$-assertion $\varphi^{**}(x,a)$ expressing in
$\la H_\alpha[G],\in, (H^{\lt\alpha})^{V[G]}\ra$ that $\varphi^*(x,a)$ holds in
$\la H_\alpha,\in, H^{\lt\alpha}\ra$. By
Proposition~\ref{prop:reducingElementarity},
$$\la H_\alpha[G],\in,(H^{\lt\alpha})^{V[G]}\ra\prec_{\Sigma_{m-1}}\la
H_\beta[G],\in,(H^{\lt\beta})^{V[G]}\ra.$$ Thus,
$\la H_\beta[G],\in, (H^{\lt\beta})^{V[G]}\ra$ satisfies
$\varphi^{**}(x,a)$, and therefore $\varphi^*(x,a)$ holds in
$\la H_\beta,\in, H^{\lt \beta}\ra$. So finally, $\varphi(x,a)$
holds in $H_\beta$.

Finally, let's prove (3). Again, we start with the standard forward direction. Suppose that $H_\alpha$ satisfies $\Sigma_m$-collection. Let $\varphi(x,y)$ and $a$ be such that $$H_\alpha[G]\models\forall x\in a\,\exists y\,\varphi(x,y).$$ So there is some $p\in G$ and a name $\dot a$ for $a$ such that $p\forces\forall x\in\dot a\,\exists y\,\varphi(x,y)$. Fix a name $\sigma\in\dom \dot a$ and apply $\Sigma_m$-collection in $H_\alpha$ to the statement $$\forall q\leq p\,\exists y\,(q\forces\sigma\in\dot a\rightarrow q\forces\varphi(\sigma,y))$$ to obtain a collecting set $y_\sigma$. Next, apply $\Sigma_m$-collection in $H_\alpha$, to the statement $$\forall x\in\dom \dot a\,\exists z\,\exists q\leq p\, (q\forces x\in\dot a\rightarrow \exists y\in z\,q\forces \varphi(x,y)),$$ which holds by the previous step because $y_x$ witnesses it for $x$, to obtain a collecting set $B$. We can assume without loss that $B$ consists only of $\p$-names and let $\dot b =\{(y,p)\mid y\in B\}$. It is not difficult to see that $\dot b_G$ gives the collecting set in $H_\alpha[G]$.

For the backward direction, assume that $H_\alpha[G]$ satisfies
$\Sigma_m$-collection and let $\varphi(x,y)$ and $a$ be such that
$H_\alpha$ satisfies $\forall x\in a\,\exists y\,\varphi(x,y)$. Again,
the case $m=1$ is trivial since cardinals are downward absolute, so we
can assume $m\geq 2$ and use the $\Delta_1$-definability of $\la
H_\alpha,\in,H^{\lt\alpha}\ra$ in $\la H_\alpha[G],\in,
(H^{\lt\alpha})^{V[G]}\ra$. Thus, we can apply $\Sigma_{m}$-collection in
$H_\alpha[G]$ to obtain a set $b$ collecting witnesses for
$\varphi(x,y)$. Since $\p$ can be assumed to have size less than $\alpha$, we can cover
$b\cap V$ with a set $\bar b$ of size less than $\alpha$ in $V$. So
$\bar b\in H_\alpha$.
\end{proof}
It follows from Proposition~\ref{prop:liftingElementarity}~(2)~and~(3) that only an initial segment of the stability predicate can be changed by set forcing. So the stable core is at least partially forcing absolute.
\begin{cor}\label{cor:forcingabsolutness}
  If $\p\in H_\gamma$ is a forcing notion and $G\subseteq \p$ is
  $V$-generic, then $(n,\alpha,\beta)\in S$ if and only if
  $(n,\alpha,\beta)\in S^{V[G]}$ for all $\alpha, \beta \geq
  \gamma$. So, in particular, $S$ and $S^{V[G]}$ agree above the size
  of the forcing.
\end{cor}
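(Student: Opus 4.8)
The plan is to deduce this corollary directly from parts (2) and (3) of Proposition~\ref{prop:liftingElementarity}, which is exactly the machinery that was set up for this purpose. The statement $(n,\alpha,\beta)\in S$ unpacks, by the definition of the stability predicate, into the conjunction of three assertions: that $\alpha$ and $\beta$ are $n$-good (strong limit cardinals in which $\Sigma_n$-collection holds), and that $H_\alpha\prec_{\Sigma_n}H_\beta$. So I would handle each of these three components separately, checking that each is unaffected by passing from $V$ to $V[G]$ whenever $\alpha,\beta\geq\gamma$, and then reassemble them.

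First I would observe that since $\p\in H_\gamma$ and $\alpha,\beta\geq\gamma$, the forcing $\p$ lies in both $H_\alpha$ and $H_\beta$; moreover $\p$ has size less than $\gamma\leq\alpha,\beta$, so it is small relative to both. The key absoluteness facts are then: (i) $\alpha$ and $\beta$ remain cardinals, indeed strong limit cardinals, in $V[G]$, since forcing of size less than a strong limit cardinal preserves that cardinal's being a strong limit; (ii) by Proposition~\ref{prop:liftingElementarity}~(3), $H_\alpha$ satisfies $\Sigma_n$-collection if and only if $H_\alpha[G]=H_\alpha^{V[G]}$ does, and likewise for $\beta$; and (iii) by Proposition~\ref{prop:liftingElementarity}~(2), once we know both $H_\alpha$ and $H_\beta$ satisfy $\Sigma_n$-collection, $H_\alpha\prec_{\Sigma_n}H_\beta$ if and only if $H_\alpha^{V[G]}\prec_{\Sigma_n}H_\beta^{V[G]}$. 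Combining (i)--(iii), $\alpha$ and $\beta$ are $n$-good with $H_\alpha\prec_{\Sigma_n}H_\beta$ in $V$ precisely when the analogous statement holds in $V[G]$, which is exactly $(n,\alpha,\beta)\in S\Leftrightarrow(n,\alpha,\beta)\in S^{V[G]}$. The final sentence, that $S$ and $S^{V[G]}$ agree above the size of the forcing, is then immediate since $\gamma$ was an arbitrary bound with $\p\in H_\gamma$.

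One small bookkeeping point deserves care. Proposition~\ref{prop:liftingElementarity}~(2) is stated with the standing hypothesis that $H_\alpha\models\Sigma_m$-collection, so to invoke it I must first establish the collection hypothesis on the correct side before transferring the elementarity. The clean order is therefore to apply part (3) first, transferring $\Sigma_n$-collection between $H_\alpha$ and $H_\alpha[G]$ (and between $H_\beta$ and $H_\beta[G]$) in whichever direction is needed, and only then to apply part (2). In the forward direction ($V$ to $V[G]$) the collection hypothesis is available in $V$ directly from $n$-goodness; in the backward direction one first recovers it in $V$ via part (3) from its holding in $V[G]$. Since both directions of (2) and (3) are already proved, there is no real obstacle here—the corollary is essentially an assembly of the proposition's clauses—and the only thing to get right is this ordering of invocations so that each application of the proposition has its hypotheses met.
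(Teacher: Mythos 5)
Your proof is correct and takes essentially the same route as the paper, which gives no separate argument and simply observes that the corollary ``follows from Proposition~\ref{prop:liftingElementarity}~(2)~and~(3).'' Your decomposition of $(n,\alpha,\beta)\in S$ into the strong-limit, collection, and elementarity clauses, and your care about applying part~(3) before part~(2) so that the standing collection hypothesis of~(2) is available on the appropriate side, is exactly the intended bookkeeping.
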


Next, let's give an argument that consistently the stable core can be a proper submodel of $\HOD$. The fact follows from results in \cite{Friedman:StableCore}, but here we give a simplified argument suggested to the second author by Woodin.
\begin{prop}
It is consistent that $L[S]\subsetneq\HOD$.
\end{prop}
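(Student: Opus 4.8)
The plan is to produce a forcing extension of $L$ in which some non-constructible real $c$ becomes ordinal definable, so that $c\in\HOD$, while the stability predicate is pushed all the way back into $L$, so that $L[S]=L$ and hence $c\notin L[S]$. Concretely, I would code $c$ into the continuum function low in the cardinal hierarchy, inside the gap $(\aleph_0,\aleph_\omega)$ below the first uncountable strong limit cardinal. Force over $L$ (for instance with an Easton product, or by first adding a Cohen real $c\subseteq\omega$ and then coding it) so that in the extension $L[G]$ one has $2^{\aleph_n}=\aleph_{n+2}$ exactly when $n\in c$ and $2^{\aleph_n}=\aleph_{n+1}$ otherwise, with $c$ generic over $L$. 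Since the sequence $\la\aleph_n\mid n<\omega\ra$ and the continuum function are definable without parameters, $c=\{n\mid 2^{\aleph_n}\geq\aleph_{n+2}\}$ is ordinal definable in $L[G]$, so $c\in\HOD^{L[G]}$, while $c\notin L$ since it is $L$-generic. Because the $\GCH$ failures stay bounded below $\aleph_\omega$, cardinals and cofinalities are preserved and $\aleph_\omega$ remains a strong limit, so the class of strong limit cardinals (hence the index set of $S$) is unchanged.

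The heart of the argument, and the main obstacle, is to show that this coding leaves the stable core equal to $L$, i.e. that $S^{L[G]}\in L$ despite $c\notin L$. The key structural point is that the coding is confined to the interval $(\aleph_0,\aleph_\omega)$, which contains \emph{no} strong limit cardinals; thus every strong limit cardinal either equals $\aleph_0$ (and lies below the coding) or is $\geq\aleph_\omega$ (and sees the coded continuum pattern in full). Above the size of the forcing, $S^{L[G]}$ and $S^L$ already agree by Corollary~\ref{cor:forcingabsolutness}, so only two further families of pairs need to be examined by hand. For strong limit cardinals $\alpha,\beta\geq\aleph_\omega$, both $H_\alpha$ and $H_\beta$ compute the full coded pattern identically and nothing above $\aleph_\omega$ is touched, so every coding-sensitive $\Sigma_n$-statement is decided the same way in $H_\alpha$ and $H_\beta$; hence $H_\alpha\prec_{\Sigma_n}H_\beta$ keeps its $L$-value, contributing only $S^L$-information. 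For a pair $(\aleph_0,\beta)$ with $\beta\geq\aleph_\omega$, the structure $H_{\aleph_0}=V_\omega$ is unchanged and satisfies no statement asserting $\GCH$ to fail at an infinite cardinal, whereas for each numeral $k\in c$ the $\Sigma_2$-statement ``$2^{\aleph_k}\geq\aleph_{k+2}$'' (with $k\in V_\omega$ as a parameter) holds in $H_\beta$; arranging $c\neq\emptyset$, this breaks $\Sigma_2$-elementarity, so $H_{\aleph_0}\prec_{\Sigma_n}H_\beta$ holds if and only if $n=1$, a value computable in $L$.

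The conceptual reason the obstacle can be overcome is that, although the full $\Sigma_2$-theory of $H_\beta$ encodes $c$ completely through the statements ``$2^{\aleph_k}\geq\aleph_{k+2}$'', the predicate $S$ records only the single bit ``do the two theories agree?'' for each pair, and that bit collapses to the coarse, $c$-independent value described above. Assembling the three cases, $S^{L[G]}$ is computed in $L$ from $S^L$, whence $L[S^{L[G]}]=L$; since $c\in\HOD^{L[G]}\setminus L$, this gives $L[S]\subsetneq\HOD$, as desired. The step I expect to require the most care in a full write-up is verifying that the elementarity relations among the strong limit cardinals $\geq\aleph_\omega$ genuinely retain their $L$-values — that is, that introducing $c$ (and objects definable from it) as available parameters cannot break a previously-holding $\Sigma_n$-elementarity — which follows because any such parameter is decided identically in $H_\alpha$ and $H_\beta$, the relevant $\GCH$ data and the real $c$ being shared between the two structures.
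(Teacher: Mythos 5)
Your proposal is correct and is essentially the paper's own argument: code a Cohen real over $L$ into the continuum pattern on the $\aleph_n$'s, observe that the real becomes ordinal definable, and check that the stability predicate is unaffected because the coding lives entirely below $\aleph_\omega$, the first uncountable strong limit. The one step you flag as delicate --- the triples $(n,\aleph_\omega,\gamma)$ --- is dispatched more cleanly in the paper by noting that for $n\geq 2$ such triples can \emph{never} lie in any stability predicate, since $H_{\aleph_\omega}$ believes there are no uncountable limit cardinals while $H_\gamma$ sees $\aleph_\omega$, so no preservation argument is needed there at all.
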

\begin{proof}
  Start in $L$ and force to add a Cohen real $r$. Next, force to code
  $r$ into the continuum pattern on the $\aleph_n$'s and let $H$ be
  $L[r]$-generic for the coding forcing $\p$ (the full support
  $\omega$-length product forcing on coordinate $n$ with
  $\Add(\aleph_n,\aleph_{n+2})$ whenever $n\in r$ and with trivial
  forcing otherwise). Observe that $\HOD^{L[r][H]}=L[r]$ because it
  has $r$, which the forcing $\p$ made definable, and it must be
  contained in $L[r]$ because $\p$ is weakly homogeneous. We would
  like to argue that the stable core of $L[r][H]$ is $L$. By
  Corollary~\ref{cor:forcingabsolutness}, the stable core of $L[r]$ is
  $L$. So it remains to argue that forcing with $\p$ does not change
  the stable core. The forcing $\p$ preserves cardinals, so the strong
  limit cardinals of $L[r]$ are the same as in $L[r][H]$. By
  Corollary~\ref{cor:forcingabsolutness}, only triples
  $(n,\aleph_\omega,\gamma)$ with $n\geq 2$ in $S$ can be affected by
  $\p$. But for $n\geq 2$, $(n,\aleph_\omega,\gamma)$ can never make
  it into any stability predicate because $H_{\aleph_\omega}$ believes
  that there are no limit cardinals and $H_\gamma$ sees
  $\aleph_\omega$.
\end{proof}

We end the section with a brief description of a class forcing notion $\p$ making no use of the stability predicate such that $V$ is a class generic extension of \hbox{$\la
\HOD,\in, \p\ra$} (this possibility was first suggested by Woodin). Conditions in $\p$ are triples
$(\alpha,\varphi,\gamma)$, where $\alpha<\gamma$ are ordinals,
$\varphi$ is a formula with ordinal parameters below $\gamma$ which
defines in $V_\gamma$ a non-empty subset $X(\alpha,\varphi,\gamma)$ of
$P(\alpha)$. The ordering is given by
$(\alpha^*,\varphi^*,\gamma^*)\leq (\alpha,\varphi,\gamma)$ whenever
$\alpha\leq\alpha^*$ and for all $y\in
X(\alpha^*,\varphi^*,\gamma^*)$, $y\cap\alpha\in
X(\alpha,\varphi,\gamma)$. It is not difficult to see that if $A$ is
an $\Ord$-Cohen generic class of ordinals, then the collection
$G(A)=\{(\alpha,\varphi,\gamma)\in\p\mid A\cap\alpha\in
X(\alpha,\varphi,\gamma)\}$ is $\p$-generic over $V$. But since we can
easily recover $A$ from $G(A)$ and clearly $V=L[A]$, we have that
$V=L[G(A)]$. Unlike the forcing in \cite{Friedman:StableCore}, $\p$ does not have the $\Ord$-cc.

\section{Coding into the stable core over $L$}\label{sec:coding}
We will argue that any set added generically over $L$ can be coded
into the stable core of a further forcing extension. It is easiest to
code into the strong limit cardinals (because these are always
definable in the stable core), but we will show that we can actually
code into any $m$-th slice $S_m$ of the stability predicate.

\begin{thm}\label{th:addGenericSetCoreOverL}
Suppose $\p\in L$ is a forcing notion and $G\subseteq \p$ is $L$-generic. Then for every $m\geq 1$, there is a further forcing extension $L[G][H]$ such that $G\in L[S^{L[G][H]}_m]$.
\end{thm}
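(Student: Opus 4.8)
The plan is to reduce the theorem to coding a single set of ordinals and to exploit that the relevant part of $S_m$ is already self-definable. Since $\p\in L$ is set forcing, fix $\theta$ with $\p\in H_\theta$; by Corollary~\ref{cor:forcingabsolutness} the stability predicate above $\theta$ is insensitive to $\p$, so above $\theta$ we may work as if in $L$, where $\GCH$ holds and the strong limit cardinals are exactly the limit cardinals. Fixing an $L$-definable enumeration of $\p$, coding $G$ amounts to coding the set $A=\{\text{code}(p):p\in G\}\subseteq\Ord$, from which $G\in L[A]$; it therefore suffices to produce a further extension $L[G][H]$ with $A\in L[S_m^{L[G][H]}]$, for then $G\in L[A]\subseteq L[S_m]$. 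The target is pinned down by the proof of Proposition~\ref{prop:SigmaElementaryClubs}: the club $C_m=\{\alpha:(\alpha,\beta)\in S_m\text{ for cofinally many }\beta\}$ is definable from $S_m$ alone, so $C_m\in L[S_m]$, and the restriction of the relation $H_\alpha\prec_{\Sigma_m}H_\beta$ to $m$-good cardinals is by definition exactly $S_m$. Thus it is enough to force so that the $S_m$-pattern on an $L[S_m]$-definable family of coding cardinals reads off $A$.

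The mechanism I would use for a single bit is the comparison $(\alpha,\beta)\in S_m$ for strong limit cardinals $\alpha<\beta$, i.e.\ $H_\alpha\prec_{\Sigma_m}H_\beta$. The data determining this comparison is \emph{local}: it depends only on the continuum pattern strictly below $\beta$, since a feature appearing above $\beta$ lies outside $H_\beta$ and is invisible to both structures. A single $\GCH$-failure $2^{\delta}=\delta^{++}$ planted at some $\delta\in(\alpha,\beta)$, while $\GCH$ is kept throughout $(\theta,\alpha)$, already makes the $\Sigma_2$-statement ``there is a cardinal above $\theta$ with a continuum jump'' true in $H_\beta$ but false in $H_\alpha$; as $\Sigma_m$-elementarity implies $\Sigma_2$-elementarity for $m\ge 2$, this breaks $(\alpha,\beta)\in S_m$ uniformly in $m$, and it does so without disturbing $\alpha$ or $\beta$ as strong limits or as $m$-good cardinals, the jump sitting strictly between them. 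For $m=1$ the mechanism degenerates to the simpler one of Corollary~\ref{cor:stronglimDefinable}: destroy strong-limitness directly, which is entirely local and non-interfering, and read $A$ off the strong limit cardinals.

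To assemble the bits I would take a reverse Easton iteration $H$ above $\theta$ modifying the continuum function, together with an $L[S_m]$-definable backbone of coding cardinals whose $S_m$-relations encode $A$: reserve disjoint intervals $I_\xi$ of cardinals, one per bit, and arrange that a canonical comparison $(\alpha_\xi,\beta_\xi)$ attached to $I_\xi$ lies in $S_m$ precisely when $\xi\notin A$. Preservation in both directions is checked by Proposition~\ref{prop:liftingElementarity}: part~(2) lifts and reflects $\Sigma_m$-elementarity across the factor of $H$ below $\beta_\xi$, and part~(3) keeps the coding cardinals $m$-good, while Corollary~\ref{cor:forcingabsolutness} confines all changes to the intended range. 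Then $A=\{\xi:(\alpha_\xi,\beta_\xi)\notin S_m\}$ is definable from $S_m$, yielding $A\in L[S_m]$ and hence $G\in L[S_m]$.

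The main obstacle is the tension, for $m\ge 2$, between \emph{breaking} and \emph{recovering}. Breaking $H_\alpha\prec_{\Sigma_m}H_\beta$ needs a $\Sigma_m$-feature in $(\alpha,\beta)$ that is \emph{not} reflected below $\alpha$; but the crude ``there is a continuum jump'' is merely $\Sigma_2$, so a single jump simultaneously breaks \emph{every} comparison $(\alpha',\beta')$ with $\alpha'<\delta<\beta'$ and $\GCH$ below $\alpha'$, expelling from $C_m$ the very backbone cardinals needed to define the coding family inside $L[S_m]$. The resolution I would pursue is to make the crude jump-feature occur cofinally, so that ``there is a jump above $\gamma$'' becomes a non-discriminating, everywhere-reflected statement, and to code instead into a finer, higher-complexity feature obtained by relativizing the continuum-jump statement to the definable clubs $C_1,\ldots,C_{m-1}$, using the complexity bookkeeping of Proposition~\ref{prop:reducingElementarity} to land it at level $\Sigma_m$. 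This produces a genuinely level-$m$ discriminator that is invisible below a coding cardinal yet leaves a cofinal backbone of $\Sigma_m$-correct cardinals intact. Verifying that this hierarchical feature both breaks $\Sigma_m$-elementarity where intended and is preserved, via the factoring argument and Proposition~\ref{prop:liftingElementarity}, where intended is the crux of the argument.
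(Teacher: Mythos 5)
Your overall scaffolding is right --- coding pairs in disjoint intervals high above the forcing, Corollary~\ref{cor:forcingabsolutness} to localize the damage, and the factoring of the coding product together with Proposition~\ref{prop:liftingElementarity}~(2)~and~(3) to preserve membership in $S_m$ for the pairs you leave alone. But the heart of the theorem is the \emph{destruction} mechanism for a single pair at level $\Sigma_m$, and there your proposal has a genuine gap that you yourself flag as ``the crux'': the plan to replace the non-discriminating $\Sigma_2$ continuum-jump feature by a ``finer, higher-complexity feature obtained by relativizing to $C_1,\ldots,C_{m-1}$'' is never constructed, and nothing is said about why such a feature would be visible to $H_{\beta_\xi}$ but not to $H_{\alpha_\xi}$ while leaving every other comparison intact. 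As written, the proof stops exactly where the work begins. (There is also a secondary soft spot in your decoding: you route it through the club $C_m$ being definable from $S_m$, but after the coding forcing the extension's $C_m$ has no evident connection to the $L$-defined family of coding pairs; the decoding should instead use that $L\subseteq L[S_m]$, so the coding-pair sequence and $S_m^L$ are already available there.)

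The paper avoids the entire problem you are wrestling with by choosing a destruction mechanism that is uniform in $m$ and trivially local: the coding pairs $(\beta_\xi,\beta_\xi^*)$ are taken to be $m$-stable cardinals of $L$ (so the pair is in $S_m^L$ to begin with), and for a bit to be switched off one forces with $\Coll(\delta_\xi^+,\beta_\xi)$, which makes $\beta_\xi$ fail to be a cardinal in $L[G][H]$ --- hence $(\beta_\xi,\beta_\xi^*)\notin S_m^{L[G][H]}$ for the crudest possible reason, simultaneously for every $m$, with no need for any level-$m$ discriminating sentence. The collateral damage is confined to the interval $(\delta_\xi^+,\beta_\xi]$; the remaining work (choosing $\beta_\xi$ of cofinality $\delta_\xi^+$ so the collapse has size $\beta_\xi$, checking exactly which cardinals are collapsed, and Claim~\ref{cl:codingPreservesGCH} on $\GCH$ preservation) ensures the surviving pairs are still strong limits and still $\Sigma_m$-elementary via the same factoring argument you describe. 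If you want to salvage your continuum-coding idea, note that the paper does use it --- but only for the first slice $S_1$ (Theorem~\ref{th:GCHfails}), where destroying strong-limitness suffices, and the discussion following that proof explains why pushing it to $S_m$ for $m\geq 2$ requires additional hypotheses.
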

\begin{proof}
We can assume via coding that $G\subseteq\kappa$ for some cardinal $\kappa$. Also, since $\p$ is a set forcing, $\GCH$ holds on a tail of the cardinals in $L[G]$, and so on a tail, the strong limit cardinals coincide with the limit cardinals. Also, on a tail, $S^L$ agrees with $S^{L[G]}$ by Corollary~\ref{cor:forcingabsolutness}.

We work in $L$.   High above $\kappa$, we will define a sequence
$\la (\beta_\xi,\beta_\xi^*)\mid \xi<\kappa\ra$ of \emph{coding pairs}
such that $(\beta_\xi,\beta_\xi^*)\in S^L_m$. The coding forcing
$\mathbb C$ will be defined so that if $H\subseteq\mathbb C$ is
$L[G]$-generic, then we will have $\xi\in G$ if and only if
$(\beta_\xi,\beta_\xi^*)\in S^{L[G][H]}_m$. Since $L[S^{L[G][H]}_m]$ can
construct $L$, it will have the sequence of the coding pairs as well
as $S^L_m$, so that all the information put together will allow it to
recover $G$.

Call a strong limit cardinal $\alpha$ $m$-\emph{stable} if
$H_\alpha\prec_{\Sigma_m} L$. Observe that there is a proper class of
$m$-stable cardinals and if $\alpha$ and $\beta$ are both $m$-stable,
then the pair $(\alpha,\beta)\in S_m^L$. Let $\delta_0$ be the least
strong limit cardinal above $\kappa$. Let $\beta_0$ be the least
$m$-stable cardinal above $\delta_0$ of cofinality $\delta_0^+$ and
let $\beta_0^*$ be the least $m$-stable cardinal above $\beta_0$. Now
supposing we have defined the pairs $(\beta_\eta,\beta_\eta^*)$ of
$m$-stable cardinals for all $\eta<\xi$, let $\delta_\xi$ be the
supremum of the $\beta_\eta^*$ for $\eta<\xi$, let $\beta_\xi$ be the
least $m$-stable cardinal above $\delta_\xi$ of cofinality
$\delta_\xi^+$ and let $\beta_\xi^*$ be the least $m$-stable cardinal
above $\beta_\xi$. In particular, $\beta_\xi >\delta_\xi^+$ since, by
$m$-stability, $\beta_\xi$ is a strong limit cardinal. Note that the
sequence $\la (\beta_\xi,\beta_\xi^*)\mid \xi<\kappa\ra$ is
$\Sigma_{m+1}$-definable over $L$. Note also that
$\beta_\eta<\beta_\eta^*<\beta_\xi<\beta_\xi^*$ for all
$\eta<\xi<\kappa$ and for limit $\lambda<\kappa$,
$\beta_\lambda>\bigcup_{\xi<\lambda}\beta_\xi$, so that the sequence
of the $\beta_\xi$ will be purposefully discontinuous. Since the
forcing $\p$ is small relative to $\delta_0$, by
Corollary~\ref{cor:forcingabsolutness}, the coding pairs
$(\beta_\xi,\beta_\xi^*)\in S_m^{L[G]}$.

Now for $\xi<\kappa$, let $\mathbb C_\xi$ be the following forcing. If
$\xi\in G$, then $\mathbb C_\xi$ is the trivial forcing. If
$\xi\notin G$, then $\mathbb C_\xi=\Coll(\delta^+_\xi,\beta_\xi)$. Let
$\mathbb C$ be the full support product
$\Pi_{\xi<\kappa}\mathbb C_\xi$ and let $H\subseteq\mathbb C$ be
$L[G]$-generic.

Let's check that $\mathbb C$ collapses the minimum number of
cardinals, namely $\mathbb C$ collapses a cardinal $\delta$ if and
only if there is a non-trivial forcing stage $\xi$ such that
$\delta_\xi^+<\delta\leq \beta_\xi$. For every $\xi<\kappa$, the
forcing $\mathbb C$ factors as
$\Pi_{\eta<\xi}\mathbb C_\eta\times\Pi_{\xi\leq\eta<\kappa}\mathbb
C_\eta$, where the second part is $\lt\delta_\xi^+$-closed (using
full support), and so cannot collapse any cardinals
$\leq\delta^+_\xi$. Observe next that the forcing
$\Coll(\delta^+_\xi,\beta_\xi)$ has size
$\beta_\xi^{\delta_\xi}=\beta_\xi$ because
$\text{cf}(\beta_\xi)>\delta_\xi$ by our choice of $\beta_\xi$, and so cannot
collapse any cardinal $\geq\beta_\xi^+$. It follows that the forcing
$\mathbb C$ cannot collapse any
$\delta\in (\beta_\xi, \delta_{\xi+1}^+]$. It remains to show that
$\delta_\lambda$ and $\delta_\lambda^+$ for a limit $\lambda$ are
preserved. By what we already showed, $\delta_\lambda$ is a limit of
cardinals in the forcing extension, and therefore remains a
cardinal. Also, by what we already showed, if $\delta^+_\lambda$ is
collapsed, then it must be collapsed to $\delta_\lambda$. Suppose this
happens and fix a bijection $f:\delta_\lambda\to \delta_\lambda^+$ in
the forcing extension. We can let $f=\bigcup_{\xi<\lambda}f_\xi$,
where $f_\xi:\gamma_\xi\to \delta_\lambda^+$ and the $\gamma_\xi$ are
cofinal in $\delta_\lambda$. Each function $f_\xi$ must be added by
some initial segment of $\Pi_{\xi<\lambda}\mathbb C_\xi$ by closure,
and therefore its range must be bounded in $\delta_\lambda^+$. Now
build a descending sequence of conditions
$\la p_\xi\mid\xi<\lambda\ra$ in $\Pi_{\xi<\lambda}\mathbb C_\xi$ such
that $p_\xi$ decides the bound on the range of $f_\xi$. But then any
condition $p$ below the entire sequence forces that $f$ is bounded in
$\delta_\lambda^+$, which is the desired contradiction.

By the following claim, the forcing $\mathbb C$
also preserves $\GCH$ where the coding forcing takes place, so the
strong limit cardinals of $L[G][H]$ are precisely the limit cardinals
there.

\begin{claim}\label{cl:codingPreservesGCH}
  The $\GCH$ continues to hold on the part where it holds in $L[G]$ in
  the forcing extension $L[G][H]$ by $\mathbb C$.
\end{claim}
\begin{proof}
  By closure, it is clear that whereever the $\GCH$ held below
  $\delta_0^+$, it will continue to hold. Since $G \subseteq \kappa$,
  $\GCH$ holds in $L[G]$ above $\delta_0$.

  If there is trivial forcing at stage 0, then the $\GCH$ holds at
  $\delta_0^+$ in $L[G][H]$. So suppose that
  $\mathbb C_0=\Coll(\delta_0^+,\beta_0)$ is a non-trivial
  stage. Recall that $\Coll(\delta_0^+,\beta_0)$ has size $\beta_0$ so
  that there are $\beta_0^+$-many nice names for subsets of
  $\delta_0^+$ (and of course in $L[G][H]$,
  $(\delta_0^+)^+=(\beta_0^+)^{L[G]}$), which shows that the $\GCH$ holds
  at $\delta_0^+$ in $L[G][H]$ in this case as well.

  Now suppose inductively that the $\GCH$ holds up to some cardinal
  $\rho$. If $\rho=\delta_\xi^+$ for a successor ordinal $\xi$, we
  repeat the argument for $\xi=0$. If
  $\delta_\xi^+<\rho<\delta_{\xi+1}^+$ and there was non-trivial
  forcing at stage $\xi$, then $\beta_\xi<\rho<\delta_{\xi+1}^+$, and
  so the $\GCH$ continues to hold because the initial forcing is small
  relative to $\rho$ and the tail forcing is closed. Next, suppose
  $\rho=\delta_\lambda^+$ for a limit cardinal $\lambda<\kappa$. Since
  $\lambda$ is a limit, the initial segment forcing
  $\Pi_{\xi<\lambda}\mathbb C_\xi$ has size at most
  $\delta_\lambda^+$. This means that there are
  $(\delta^+_\lambda)^+$-many nice-names for subsets of
  $\delta_\lambda^+$, so that the $\GCH$ holds at
  $\rho=\delta_\lambda^+$. Finally, suppose
  $\rho=\delta_\lambda$. Each $A\subseteq\delta_\lambda$ is uniquely
  determined by the sequence $\la A_\xi\mid\xi<\lambda\ra$ with
  $A_\xi=A\cap\beta_\xi$. Let $\dot f_\xi$ be a name for an injection
  from $P(\beta_\xi)$ into $\delta_\lambda$, which exists since, by
  assumption, the $\GCH$ holds below $\delta_\lambda$ in $L[G][H]$. Let's
  argue that every sequence $\la A_\xi\mid\xi<\lambda\ra$ such that
  $A_\xi\subseteq \beta_\xi$ in the extension has a name of the form
  $\dot A$, where $\dot A(\xi)=\dot f^{-1}_\xi(\gamma)$ for some
  $\gamma\in\delta_\lambda$. Let $\dot B$ be any name for the sequence
  $\la A_\xi\mid\xi<\lambda\ra$ and $p'\in H$ be a condition forcing
  that $\dot B$ is a sequence of the right form. Below $p'$, we build
  a descending sequence $p_\xi$ for $\xi<\lambda$ of conditions
  deciding that $\dot B(\xi)=\dot f_\xi^{-1}(\gamma_\xi)$ for some
  fixed $\gamma_\xi<\delta_\lambda$. By closure, there is some $p$
  below the entire sequence. So by density, there is some such
  $p\in H$. It follows that there are at most as many subsets of
  $\delta_\lambda$ in the extension as there are functions
  $f:\lambda\to\delta_\lambda$ in the ground model, and there are
  $\delta_\lambda^+$-many such functions.
\end{proof}

Now we will argue that the pair
$(\beta_\xi,\beta_\xi^*)$ belongs in $S^{L[G][H]}_m$ if and only if $\xi\in G$.
If $\xi\notin G$, then $\beta_\xi$ is not even a cardinal in
$L[G][H]$, and therefore certainly
$(\beta_\xi,\beta_\xi^*)\notin S^{L[G][H]}_m$. Suppose that
$\xi\in G$, so that there is trivial forcing at stage $\xi$. By what
we already argued about which cardinals are collapsed in $L[G][H]$, it
follows that $\beta_\xi$ and $\beta_\xi^*$ are limit cardinals
there. Let $\mathbb C_\sm=\Pi_{\eta<\xi}\mathbb C_\eta$ and
$\mathbb C_\tail=\Pi_{\xi<\eta<\kappa}\mathbb C_\eta$, and note that
since there is no forcing at stage $\xi$, $\mathbb C$ factors as
$\mathbb C_\sm\times\mathbb C_\tail$. Let $H_\sm\times H_\tail$ be the
corresponding factoring of the generic filter $H$. Since
$\mathbb C_\tail$ is $\leq\beta^*_\xi$-closed, we have that
$H_{\beta_\xi}^{L[G][H]}=H_{\beta_\xi}^{L[G][H_\sm]}$ and
$H_{\beta_\xi^*}^{L[G][H]}=H_{\beta_\xi^*}^{L[G][H_\sm]}$. By
Proposition~\ref{prop:liftingElementarity}~(3),
$H_{\beta_\xi}^{L[G][H_\sm]}$ satisfies $\Sigma_m$-collection and by
Proposition~\ref{prop:liftingElementarity}~(2),
$H_{\beta_\xi}^{L[G][H_\sm]}\prec_{\Sigma_m}
H_{\beta_\xi^*}^{L[G][H_\sm]}$.
\end{proof}
It follows from Theorem~\ref{th:addGenericSetCoreOverL} that
(consistently) the stable core is not a fine-structural or in any
sense canonical inner model. Among the numerous corollaries of
Theorem~\ref{th:addGenericSetCoreOverL} are the following.
\begin{cor}$\,$
\begin{itemize}
\item[(1)] The $\GCH$ can fail on an arbitrarily large initial segment
  of the regular cardinals in the stable core.
\item[(2)] An arbitrarily large ordinal of $L$ can be countable in the stable core.
\item[(3)] ${\rm MA}+\neg{\rm CH}$ can hold in the stable core.
\end{itemize}
\end{cor}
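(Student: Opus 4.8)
The plan is to derive each item from Theorem~\ref{th:addGenericSetCoreOverL} by the same two-move strategy: first force the desired property over $L$ with a set forcing $\p$ and generic $G$, and then apply the theorem to pass to an extension $L[G][H]$ in which $G$ belongs to the stable core. The remaining work is to transfer the property from $L[G]$ to the stable core $L[S^{L[G][H]}]$ itself, rather than merely to the ambient model $L[G][H]$.

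The first step I would carry out is a sandwiching observation guaranteeing agreement on small sets. Write $V=L[G][H]$ and $S=S^V$; since $G\in L[S_m]\subseteq L[S]$ by the theorem, we have $L[G]\subseteq L[S]\subseteq V$. Inspecting the coding forcing $\mathbb C$ from the proof of Theorem~\ref{th:addGenericSetCoreOverL}, its nontrivial stages are collapses $\Coll(\delta_\xi^+,\beta_\xi)$ with every $\beta_\xi>\delta_0$, where $G\subseteq\kappa$ and $\delta_0$ is the least strong limit cardinal above $\kappa$; hence the full-support product $\mathbb C$ is $(<\delta_0^+)$-closed, adds no subsets of any $\gamma\le\delta_0$, and preserves all cardinals $\le\delta_0^+$. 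Squeezing $P(\gamma)^{L[G]}\subseteq P(\gamma)^{L[S]}\subseteq P(\gamma)^{V}=P(\gamma)^{L[G]}$ for $\gamma\le\delta_0$, I conclude that $L[G]$, $L[S]$, and $V$ have the same subsets of, and the same cardinals up to, $\delta_0$. In particular the continuum function and the regular cardinals of the stable core below $\delta_0$ agree with those of $L[G]$.

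Given this, the three items follow by choosing $\p$ appropriately with $\kappa$ (hence $\delta_0$) placed above the region of interest. For (1), let $\p$ be the Easton product over $L$ forcing $2^\nu=\nu^{++}$ at every regular $\nu$ below a prescribed bound $<\kappa$; these $\nu$ remain (successor, hence regular) cardinals with unchanged power sets in $L[S]$, so $\GCH$ fails at each of them in the stable core, and the bound is arbitrary. For (2), take $\p=\Coll(\omega,\lambda)$ for a prescribed ordinal $\lambda$ of $L$; no agreement argument is needed here, as $G$ codes a surjection $\omega\to\lambda$ and $G\in L[S]$ witnesses that $\lambda$ is countable in the stable core. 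For (3), force ${\rm MA}$ together with $2^{\aleph_0}=\aleph_2$ over $L$ by the usual finite-support ccc iteration of length $\aleph_2$, taking $G\subseteq\kappa=\aleph_2$; agreement below $\delta_0$ then gives $2^{\aleph_0}=\aleph_2$ in $L[S]$, so ${\rm CH}$ fails there.

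The step I expect to carry the most weight is verifying ${\rm MA}$ in $L[S]$ for item (3). Since $2^{\aleph_0}=\aleph_2$ in $L[S]$, full ${\rm MA}$ there reduces to ${\rm MA}_{\aleph_1}$ for ccc posets of size $\le\aleph_1$. Any such poset $Q$ together with an $\aleph_1$-indexed family of dense subsets is coded by subsets of $\aleph_1<\delta_0$ and therefore lies in $L[G]$ by the agreement. As $L[G]\subseteq L[S]$ has fewer antichains and $\aleph_1$ is absolute between the two, $Q$ is still ccc in $L[G]$, so ${\rm MA}$ in $L[G]$ produces a filter $F\in L[G]\subseteq L[S]$ meeting the given dense sets; since meeting a dense set is absolute, $L[S]\models{\rm MA}$. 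The only delicate points are the downward absoluteness of ccc and ensuring the relevant posets and dense-set families fall inside the agreement region, both handled by taking $\kappa\ge\aleph_2$ so that $\delta_0$ sits well above $\aleph_2$.
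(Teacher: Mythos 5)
Your proposal is correct and follows essentially the same route as the paper: force the desired property over $L$ with a set forcing, then apply Theorem~\ref{th:addGenericSetCoreOverL} to code $G$ into the stable core, and use the closure of the coding forcing to see that $L[G]$, $L[S]$, and $L[G][H]$ agree on subsets of (and cardinals up to) $\delta_0$, so the property transfers to $L[S]$. Your treatment of (3) — reducing ${\rm MA}$ to ccc posets of size $\aleph_1$ living in $L[G]$, noting downward absoluteness of ccc, and pulling the filter from $L[G]$ into $L[S]$ — is exactly the (more detailed) version of the paper's remark that any relevant partial order in the stable core already exists in $L[G]$.
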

\begin{proof}
  For (1), we force over $L$ to violate the $\GCH$ on an initial
  segment of the regular cardinals, and then code all the subsets we
  add into the stable core of the forcing extension by the coding
  forcing $\mathbb C$. For (2), we force over $L$ to collapse the
  ordinal, and then code the collapsing map into the stable core of
  the forcing extension by the coding forcing $\mathbb C$. For (3), we
  force Martin's Axiom with $2^\omega=\kappa$ to hold over $L$, and let $L[G]$ be the forcing extension. We
  then code the $G$ into the stable core of the forcing extension high above $\kappa$. Any partial order $\p$ on $\kappa$ in the stable core of the coding extension already exists in $L[G]$ and therefore $G$ will have added a partial generic filter for it.
\end{proof}

\begin{thm}\label{th:GCHfails}
It is consistent that the $\GCH$ fails at all regular cardinals in the stable core.
\end{thm}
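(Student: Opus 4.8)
The plan is to run the coding construction of Theorem~\ref{th:addGenericSetCoreOverL} at class length, interleaving it with forcing that violates the $\GCH$, so that every regular cardinal of the resulting stable core is a place where the $\GCH$ has been made to fail. Working over $L$, I would define an Easton-support class iteration $\mathbb P \ast \mathbb C$ in which $\mathbb P$ is the class product forcing $\Add(\delta,\delta^{++})$ at every regular cardinal $\delta$, thereby making $2^\delta=\delta^{++}$ throughout, and $\mathbb C$ is a class-length version of the coding forcing from Theorem~\ref{th:addGenericSetCoreOverL} that codes each subset added by $\mathbb P$ into the stability predicate. As in that proof, the coding uses a class sequence of coding pairs $(\beta_\xi,\beta_\xi^*)$ of $m$-stable cardinals (for a fixed $m\geq 1$) and collapses $\Coll(\delta_\xi^+,\beta_\xi)$ to record bits: leaving $\beta_\xi$ a cardinal records a $1$ and collapsing it records a $0$, so that $(\beta_\xi,\beta_\xi^*)\in S_m$ in the final model exactly when the corresponding bit is $1$. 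Since each subset to be coded is a subset of some cardinal $\delta$, I would devote a block of coding pairs above $\delta$ to it, exactly as in Theorem~\ref{th:addGenericSetCoreOverL}.

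The construction proceeds in blocks partitioning $\Ord$, where in each block we violate the $\GCH$ at the regular cardinals lying in it and then code, via collapses higher in the block, the subsets added in the previous block. The blocks must be chosen so that the collapsing region of each block---where the cardinal structure is disturbed---is itself swept up by the $\GCH$-violation and coding of the next block, so that the $\GCH$ is violated cofinally and at every relevant cardinal. Because every proper initial segment of $\mathbb P \ast \mathbb C$ is set forcing, the local analysis of Section~\ref{sec:preliminaries} applies below any cardinal: Corollary~\ref{cor:forcingabsolutness} guarantees that the stability predicate is disturbed only above the size of each set-sized piece, Proposition~\ref{prop:liftingElementarity}~(2)~and~(3) transfer $\Sigma_m$-elementarity and $\Sigma_m$-collection through the set-sized factors as in Theorem~\ref{th:addGenericSetCoreOverL}, and the argument of Claim~\ref{cl:codingPreservesGCH} shows the collapses preserve the $\GCH$ exactly where it is supposed to hold. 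One must also check that $\mathbb P \ast \mathbb C$ is tame enough to preserve $\ZFC$ and to have a definable forcing relation, which should follow from its Easton support together with the $m$-stability of the coding pairs.

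In the final model $V=L[G][H]$, the stable core $L[S]$ contains $L$ and hence the sequence of coding pairs (definable over $L$), and it reads off from $S_m$ exactly which bits were recorded, thereby recovering all of the subsets added by $\mathbb P$; so at each regular cardinal $\delta$ of the stable core it has the full family of $\delta^{++}$-many subsets of $\delta$ witnessing $2^\delta=\delta^{++}$, and the $\GCH$ fails there. The hard part is to guarantee that this covers \emph{every} regular cardinal of the stable core, not merely an initial segment as in the corollary to Theorem~\ref{th:addGenericSetCoreOverL}: the collapses recording bits of value $0$ destroy cardinals in $V$ whose collapsing maps need not belong to $L[S]$, so such a $\beta_\xi$ can remain a regular cardinal of the stable core where the $\GCH$ has not yet been made to fail. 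I would handle this by arranging the interleaving so that $L[S]$ and $V$ agree on cardinals in the coding regions---coding in addition enough information about the collapses that $L[S]$ recognizes the collapsed ordinals as non-cardinals---so that the regular cardinals of $L[S]$ are precisely the $\GCH$-violation points. Getting this bookkeeping to close up uniformly across all blocks is the main obstacle; once it does, the $\GCH$ fails at every regular cardinal of the stable core.
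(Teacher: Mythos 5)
Your plan founders on exactly the obstacle you name in your last paragraph, and the paper resolves it not with bookkeeping but by changing the coding mechanism. If you record bits with $\Coll(\delta_\xi^+,\beta_\xi)$ as in Theorem~\ref{th:addGenericSetCoreOverL}, then (by Claim~\ref{cl:codingPreservesGCH}) the $\GCH$ \emph{holds} throughout the coding region of each block, and the collapses make the cardinals of $V$ diverge from those of $L[G]$; your proposed fix --- interleaving blocks so that the next block's $\GCH$-violation sweeps up the previous block's coding region, while also coding enough about the collapses for $L[S]$ to recognize the collapsed ordinals --- never closes up, since each new round of violation requires a new coding region in which the $\GCH$ again holds and new collapses again disturb the cardinal structure. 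You also keep the $m$-stable coding pairs and the slices $S_m$; at class length this runs into the separate problem the paper discusses immediately after the theorem, namely that for singular strong limit $\alpha$ the restriction $\p\restrict\alpha$ of the class-length product has unbounded support in $H_\alpha$ and is not a class forcing over $H_\alpha$, so the lifting arguments for $\Sigma_m$-elementarity with $m\geq 2$ are unavailable.

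The paper's proof instead does all of the $\GCH$-violation in one step (the Easton-support $\Ord$-length product of $\Add(\kappa,\kappa^{++})$ over all regular $\kappa$ of $L$) and then codes the single class $G\subseteq\Ord$ into the \emph{strong limit} cardinals, i.e.\ into $S_1$: the coding pairs $(\beta_\xi,\beta_\xi^*)$ are just consecutive strong limit cardinals of $L$, and at a stage $\xi\notin G$ one forces with $\Add(\delta_\xi,\beta_\xi)$ (or $\Add(\delta_\xi^+,\beta_\xi)$ when $\delta_\xi$ is singular), which destroys the strong limit property of $\beta_\xi$ without collapsing any cardinal. The whole coding forcing is then cardinal preserving, so $V$, $L[G]$ and the stable core agree on regular cardinals, the failure $2^\delta=\delta^{++}$ persists at every regular $\delta$, and by Corollary~\ref{cor:stronglimDefinable} the stable core reads $G$ off from the strong limit pattern and hence contains $L[G]$ together with all the witnessing subsets. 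That substitution --- coding by killing strong limits rather than by collapsing cardinals --- is the missing idea in your proposal.
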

\begin{proof}
The idea will be to force the $\GCH$ to fail at all regular cardinals over $L$, and then use $\Ord$-many coding pairs to code all the added subsets into the stable core of a forcing extension. In this argument, we will code into the limit cardinals by using generalized Cohen forcing instead of the collapse forcing.

In $L$, let $\p$ be the Easton support $\Ord$-length product forcing with $\Add(\kappa,\kappa^{++})$ at every regular cardinal $\kappa$, and let $G\subseteq \p$ be $L$-generic. Standard arguments show that in $L[G]$, $2^\kappa=\kappa^{++}$ for every regular cardinal $\kappa$, while the $\GCH$ continues to hold at singular cardinals (see, for example, \cite{Jech:SetTheory}). Since $\la L[G],\in, G\ra$ has a definable global well-order, we can assume via coding that $G\subseteq \Ord$ (and define the coding forcing in this expanded structure).

We first work in $L$. Let $\delta_0$ be the least strong limit
cardinal. Above $\delta_0$, we will define a sequence
$\la (\beta_\xi,\beta_\xi^*)\mid\xi\in\Ord\ra$ of \emph{coding pairs}
of strong limit cardinals. Let $\beta_0<\beta_0^*$ be the next two strong
limit cardinals above $\delta_0$. Now supposing we have defined the
pairs $(\beta_\eta,\beta_\eta^*)$ of strong limit cardinals for all
$\eta<\xi$, let $\delta_\xi$ be the supremum of the $\beta_\eta^*$ for
$\eta<\xi$ and let $\beta_{\xi}<\beta_{\xi}^*$ be the next two
strong limit cardinals above $\delta_\xi$. Observe that every strong
limit cardinal of $L$ remains a strong limit in $L[G]$, and so in
particular, the elements $\beta_\xi$ and $\beta_\xi^*$ of the coding
pairs are strong limits in $L[G]$.

For each ordinal $\xi$, let $\mathbb C_\xi$ be the following
forcing. If $\xi\in G$, then $\mathbb C_\xi$ is the trivial
forcing. So suppose that $\xi\notin G$. If $\delta_\xi$ is singular,
we let $\mathbb C_\xi=\Add(\delta^+_\xi,\beta_\xi)$ (the partial order
to add $\beta_\xi$-many Cohen subsets to $\delta^+_\xi$ with bounded
conditions), and otherwise, we let
$\mathbb C_\xi=\Add(\delta_\xi,\beta_\xi)$. Let's argue that all
forcing notions $\mathbb C_\xi$ are cardinal preserving. If
$\delta_\xi$ is singular, then $\GCH$ holds at $\delta_\xi$, and
therefore $\Add(\delta^+_\xi,\beta_\xi)$ has the
$(2^{\lt\delta_\xi^+})^+=(2^{\delta_\xi})^+=\delta_\xi^{++}$ chain
condition, which means that it preserves all cardinals. If
$\delta_\xi$ is regular, then it is inaccessible because it is always
a limit cardinal, and therefore $\Add(\delta_\xi,\beta_\xi)$ preserves
all cardinals. Obviously, every non-trivial forcing $\mathbb C_\xi$
destroys the strong limit property of $\beta_\xi$ in the forcing
extension.

Let $\mathbb C$ be the $\Ord$-length Easton support product $\Pi_{\xi\in\Ord}\mathbb C_\xi$. Let's argue that the forcing notion $\mathbb C$ is also cardinal preserving. Observe first that if $\delta_\xi$ is singular, then the initial segment  $\Pi_{\eta<\xi}\mathbb C_\eta$ has size $\delta_\xi^{\delta_\xi}=\delta_\xi^+$ since $\GCH$ holds at $\delta_\xi$. If $\delta_\xi$ is regular, then $\delta_\xi$ is inaccessible, so that conditions in $\Pi_{\eta<\xi}\mathbb C_\eta$ are bounded, and hence $\Pi_{\eta<\xi}\mathbb C_\eta$ has size $\delta_\xi^{\lt\delta_\xi}=\delta_\xi$.  Now we can argue that if $\delta_\xi^+<\gamma<\delta_{\xi+1}$ is a cardinal, then it remains a cardinal in the forcing extension by $\mathbb C$ because by previous calculations, the initial segment $\Pi_{\eta<\xi}\mathbb C_\eta\times \mathbb C_\xi$  cannot collapse $\gamma$, and the tail forcing is highly closed. Cardinals of the form $\delta_{\xi+1}$ cannot be collapsed because the successor stage forcings are cardinal preserving. It remains to consider cardinals of the form $\delta_\lambda$ and $\delta_\lambda^+$ for a limit cardinal $\lambda$. By what we already showed, $\delta_\lambda$ is a limit of cardinals in the forcing extension, and hence must be a cardinal itself. If $\delta_\lambda$ is regular, then it is inaccessible, and hence the initial segment $\Pi_{\xi<\lambda}\mathbb C_\xi$ is too small to collapse $\delta_\lambda^+$. So suppose that $\delta_\lambda$ is singular with $\text{cof}(\delta_\lambda)=\mu<\delta_\lambda$. By regrouping the product, we can view the forcing $\Pi_{\xi<\lambda}\mathbb C_\xi$ as a product of length $\mu$, which is $\lt\mu$-closed on a tail. Thus, an analogous argument to the one given in the proof of Theorem~\ref{th:addGenericSetCoreOverL} shows that $\delta_\lambda^+$ cannot be collapsed to $\delta_\lambda$ in this case, completing the proof that $\mathbb C$ is cardinal preserving. In particular, this implies that $\GCH$ continues to fail at all regular cardinals in any forcing extension by $\mathbb C$.

Let $H\subseteq\mathbb C$ be $L[G]$-generic. For each $\xi\in\Ord$, we can factor $\mathbb C$ as the product $\Pi_{\eta<\xi}C_\eta\times\Pi_{\xi\leq\eta}C_\eta$, where the tail forcing $\Pi_{\xi\leq \eta}C_\eta$ is $\lt\delta_\xi$-closed since we used Easton support. Note that since $\mathbb C$ is a progressively closed class product, it preserves $\ZFC$ to the forcing extension $L[G][H]$.

Suppose $\xi<\kappa$ is a trivial stage of forcing in $\mathbb C$. Let $\mathbb C_\sm=\Pi_{\eta<\xi}\mathbb C_\eta$ and $\mathbb C_\tail=\Pi_{\xi<\eta}\mathbb C_\eta$. The forcing $\mathbb C_\sm$ has size at most $\delta_\xi^+$, and therefore cannot destroy the strong limit property of $\beta_\xi$ and $\beta_\xi^*$, and neither can $\mathbb C_\tail$, which is $\lt\beta_\xi^*$-closed. It follows that $\beta_\xi$ and $\beta_\xi^*$ remain strong limits in $L[G][H]$.
\end{proof}
Let's see what it would take to code subsets added by $G$ into the
$m$-th slice $S_m$ of the stability predicate. The main problem is that if $\alpha$ is a singular cardinal, then $\p\restrict\alpha$ has unbounded support in $\alpha$, and therefore $\p\restrict\alpha$ is not a class forcing over $H_\alpha$, which prevents us from using standard lifting arguments to go from $H_\alpha^L\prec_{\Sigma_m}H_\beta^L$ to $H_\alpha^{L[G]}\prec_{\Sigma_m} H_\beta^{L[G]}$. The construction would go through for $m$, if we assume that $L$ has a proper class of inaccessible cardinals $\alpha$ such that $H_\alpha^L\prec_{\Sigma_{m+1}} L$. The class forcing $\p$ is $\Delta_2$-definable, so the forcing relation for $\Sigma_m$-formulas is $\Sigma_{m+1}$-definable. Using this, we can argue that if $H_\alpha^L\prec_{\Sigma_{m+1}} H_\beta^L$, then $H_\alpha^{L[G]}=H_\alpha^L[G]\prec_{\Sigma_{m}}H_\beta^L[G]=H_\beta^{L[G]}$.

Finally, let's note that if we only wanted the $\GCH$ to fail cofinally, then we could force in a single step to add $\kappa^{++}$-many subsets to some $\kappa$, followed by the forcing to code the sets into the stable core, and do this for cofinally many cardinals, spacing them out enough to prevent interference.
\section{Measurable cardinals in the stable core}\label{sec:measurables}
In \cite{KennedyMagidorVaananen:extendedLogics}, Kennedy, Magidor, and
V\"a\"an\"anen, studied properties of the model $\la
L[\Card],\in,\Card\ra$ for the class $\Card$ of cardinals of $V$. They
showed that if there is a measurable cardinal, then $L[\mu]$, the canonical model for a single measurable cardinal, is contained in $L[\Card]$. In particular, $L[\Card]^{L[\mu]}=L[\mu]$, which shows that $L[\Card]$ can have a measurable cardinal. Recently, Philip Welch showed that if $m_1^{\#}$ exists, then $L[\Card]$ is a certain Prikry-type forcing extension of an iterate of $m_1^{\#}$ adding Prikry sequences to all measurable cardinals in it \cite{Welch:ClubClassHartigQuantifierModel}. It follows from this that, in the presence of sufficiently large large cardinals, the model $L[\Card]$ satisfies the $\GCH$ and has no measurable cardinals, although it does have an inner model with a proper class of measurables.

We adapt techniques of \cite{KennedyMagidorVaananen:extendedLogics} to show that if there is a measurable cardinal, then, for every $m\geq 1$, $L[\mu]$ is contained in $L[S_m]$. In particular, $L[S^{L[\mu]}]=L[\mu]$, showing that the stable core can have a measurable cardinal. Indeed, we improve this result to show that the stable core can have a discrete proper class of measurable cardinals.

Let's start with the following easy proposition showing that if $0^\#$ exists, then it is in the stable core.
\begin{prop}\label{prop:zeroSharp}
If $0^\#$ exists, then $0^\#\in L[S_m]$ for every $m\geq 1$.
\end{prop}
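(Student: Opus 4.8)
The plan is to show that $0^\#$ is definable in the structure $\la L[S_m],\in,S_m\ra$ by recovering the class of Silver indiscernibles from the information already available in $S_m$. The key observation is that, since $0^\#$ exists, every uncountable cardinal of $V$ is a Silver indiscernible, and hence in particular the $\Sigma_m$-elementarity structure between initial segments $H_\alpha^V$ encodes a great deal of information about how $L$ sits inside $V$. Concretely, I would first note that $L[S_m]$ can construct $L$ internally, so it has access to the $L$-hierarchy; the task is to isolate, inside $L[S_m]$, a closed unbounded class $I$ of ordinals that forms a set of indiscernibles for $L$ generating all of $L$, since such a class determines $0^\#$ uniquely.

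First I would use Corollary~\ref{cor:stronglimDefinable} (or more generally Proposition~\ref{prop:SigmaElementaryClubs}) to extract, definably in $\la L[S_m],\in,S_m\ra$, the class $C_m$ of strong limit cardinals $\alpha$ with $H_\alpha\prec_{\Sigma_m}V$, which is a definable closed unbounded class of cardinals of $V$. Since $0^\#$ exists, the Silver indiscernibles form a closed unbounded class of cardinals, and any closed unbounded class of uncountable cardinals of $V$ consists of (a subclass of the) Silver indiscernibles and in particular is a class of indiscernibles for $L$. Thus every $\alpha\in C_m$ is a Silver indiscernible, and the elements of $C_m$ generate $L$ via the Skolem functions of $L$. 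The point is that $L[S_m]$, having both $C_m$ and the constructible hierarchy, can compute for any $L$-Skolem term $t$ and any increasing tuple $\alpha_1<\dots<\alpha_k$ drawn from $C_m$ the value $t^L(\alpha_1,\dots,\alpha_k)$, and can therefore read off the complete theory of $(L,\in,\alpha_1,\dots,\alpha_k)$, i.e.\ the type realized by these indiscernibles.

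Next I would argue that this type is exactly $0^\#$. Because $C_m$ is a genuine class of Silver indiscernibles, the formulas true of increasing tuples from $C_m$ do not depend on the choice of tuple (by indiscernibility) and form a complete, consistent, well-founded, unbounded, remarkable Ehrenfeucht--Mostowski blueprint—that is, precisely the set $0^\#$. Since all of this computation takes place using only $C_m$ (which is definable from $S_m$) and the internally constructed $L$, the resulting set of formulas is a member of $L[S_m]$, giving $0^\#\in L[S_m]$.

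The main obstacle, and the point that needs the most care, is verifying that the definable class $C_m$ really consists of Silver indiscernibles and is rich enough to generate $L$, rather than merely being some club of cardinals that happens to capture less information. This is handled by the standard fact that, once $0^\#$ exists, the Silver indiscernibles are exactly the uncountable cardinals that are regular in $L$ in the appropriate sense, and any closed unbounded class of uncountable cardinals is contained in the indiscernibles; since $C_m$ is such a club, indiscernibility applies and the induced EM-type is forced to be $0^\#$ itself. No delicate fine-structural input is needed beyond the basic theory of $0^\#$, so once this identification is made the proof is essentially immediate.
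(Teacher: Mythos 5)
Your proposal follows essentially the same route as the paper: the paper's proof simply fixes an increasing $\omega$-sequence $\la\alpha_n\mid n<\omega\ra$ of $V$-cardinals inside $L[S_m]$ (available by Corollary~\ref{cor:stronglimDefinable}), notes that these are Silver indiscernibles, and reads off $0^\#$ as $\{\varphi : L_{\alpha_n}\models\varphi(\alpha_0,\ldots,\alpha_{n-1})\}$. Two small points in your write-up deserve correction, though neither is fatal. First, your claim that the elements of $C_m$ ``generate $L$ via the Skolem functions of $L$'' is false: the Skolem hull in $L$ of a proper sub-club of the Silver indiscernibles is in general a proper elementary submodel of $L$ (e.g.\ the hull of $I\setminus\{\iota_0\}$ omits $\iota_0$). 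Fortunately you do not need this; all that is needed is that the elements of $C_m$ (or of any increasing tuple of uncountable $V$-cardinals) are indiscernibles, so that the type they realize is well defined and equals $0^\#$. Second, ``reading off the complete theory of $(L,\in,\alpha_1,\ldots,\alpha_k)$'' is not literally a first-order computation, since $L$ is a proper class; the standard fix, which is exactly the paper's one-line formula, is to use $L_{\alpha_{k+1}}\prec L$ for the next indiscernible $\alpha_{k+1}$ and evaluate $\varphi(\alpha_1,\ldots,\alpha_k)$ in $L_{\alpha_{k+1}}$. With those two repairs your argument is correct and coincides with the paper's, just stated with more EM-blueprint machinery than is needed.
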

\begin{proof}
Every $L[S_m]$ has many increasing $\omega$-sequences of $V$-cardinals, so fix some such sequence $\la \alpha_n\mid n<\omega\ra$. We have that $\varphi(x_0,\ldots,x_{n-1})\in 0^\#$ if and only if $L_{\alpha_n}\models\varphi(\alpha_0,\ldots,\alpha_{n-1})$.
\end{proof}

\begin{thm}\label{thm:LmuSC}
Suppose that $\kappa$ is a measurable cardinal and $L[\mu]$ is the canonical
  inner model with a normal measure $\mu$ on $\kappa$. Then
  $L[\mu]\subseteq L[S_m]$ for every $m\geq 1$.
\end{thm}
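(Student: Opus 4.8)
The plan is to show that the measure $\mu$, equivalently the model $L[\mu]$, can be reconstructed inside $\langle L[S_m],\in,S_m\rangle$ by exploiting the well-known structure theory of $L[\mu]$: namely, that $L[\mu]$ is \emph{iterable} and that its iterates are obtained by sending $\kappa$ successively through the class of Silver indiscernibles above $\kappa$. The key idea is that $S_m$ already defines enough of the large-scale structure of $V$ (strong limit cardinals, the clubs $C_n$, elementarity between various $H_\alpha$'s) to recover a canonical iterate of $L[\mu]$, and since any iterate of $L[\mu]$ computes $L[\mu]$ back, this suffices.

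More concretely, I would first argue that there is a proper class of $V$-cardinals $\lambda>\kappa$ that are simultaneously \emph{generating indiscernibles} for $L[\mu]$ and \emph{definable in the stable core from $S_m$}. By Proposition~\ref{prop:zeroSharp} we already know $0^\#\in L[S_m]$, so $L[S_m]$ sees all Silver indiscernibles for $L$; the task is the analogous fact one level up. Using the iterability of $L[\mu]$, form the linear iteration of $L[\mu]$ by $\mu$ of length $\mathrm{Ord}$, sending $\kappa=\kappa_0$ to a closed unbounded class of ordinals $\langle \kappa_\alpha\mid\alpha\in\mathrm{Ord}\rangle$; on a tail, the $\kappa_\alpha$ are cardinals of $V$ and, more importantly, they can be characterized purely in terms of the $H_\gamma$-elementarity data coded by $S_m$ (for instance as the appropriate strong limit cardinals $\alpha$ with $H_\alpha\prec_{\Sigma_m}V$ that are additionally generating indiscernibles over the inner model built so far). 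Then the resulting iterate $L[\mu_\infty]$ with its top generators is definable inside $\langle L[S_m],\in,S_m\rangle$.

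Next I would invoke the standard inner-model-theoretic fact that from any sufficiently long iterate $L[\mu_\infty]$ together with the class of its generating indiscernibles one recovers the original $L[\mu]$: one forms the direct limit and inverts the iteration map, or, more simply, uses that the iterates all have the same $L[\mu]$ as their common core and that the critical sequence determines the normal measure. Since everything in this reconstruction is carried out using only parameters available in $L[S_m]$ (the club of generators and the constructibility-from-them machinery, all of which $L[S_m]$ can perform because it contains $0^\#$ and the relevant slice $S_m$), we conclude $L[\mu]\subseteq L[S_m]$.

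The main obstacle will be the middle step: precisely characterizing the generating indiscernibles $\langle\kappa_\alpha\rangle$ of the $\mathrm{Ord}$-length iterate in terms definable from $S_m$, rather than in terms of the measure $\mu$ itself (which is of course not a priori available to $L[S_m]$). The delicate point is that the iteration points must be pinned down by their external elementarity behaviour—captured by the stability predicate and the clubs $C_n$ from Proposition~\ref{prop:SigmaElementaryClubs}—and one must verify that these externally-definable ordinals genuinely coincide, on a tail, with the critical sequence of the canonical iteration. Establishing this coincidence, and checking that the comparison/iterability apparatus for $L[\mu]$ goes through using only $S_m$-definable parameters, is where the real work lies; the surrounding direct-limit and reconstruction arguments are routine given that coincidence.
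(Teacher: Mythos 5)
Your high-level skeleton --- iterate $L[\mu]$, show the iterate lands inside $L[S_m]$, then recover $L[\mu]$ from the iterate --- is exactly the paper's strategy, but both of the steps you defer as ``the real work'' and ``routine,'' respectively, are where your plan either lacks the key idea or would actually fail. For the first step, you do not need to (and should not try to) pin down the critical sequence $\langle\kappa_\xi\rangle$ exactly in $S_m$-definable terms. The paper's trick is to pick a strong limit $\lambda\gg\kappa^+$ such that $\{\alpha : (\alpha,\lambda)\in S_m\}$ is unbounded in $\lambda$ and iterate $\mu$ exactly $\lambda$ times, so that the image measure $\mu_\lambda$ lives on $\lambda$ itself. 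Since $\kappa_\eta=\eta$ for a tail of cardinals $\eta<\lambda$ and every $X\in\mu_\lambda$ contains a tail of the critical sequence, every $X\in\mu_\lambda$ contains a tail of $\{\eta<\lambda : (\eta,\lambda)\in S_m\}$; hence the filter $\cF$ generated by these tails satisfies $\cF\cap L[\mu_\lambda]=\mu_\lambda$ (as $\mu_\lambda$ is an ultrafilter there), giving $L[\mu_\lambda]=L[\cF]\subseteq L[S_m]$ with no identification of individual iteration points at all. (Your invocation of Silver indiscernibles and $0^\#$ is a red herring here; the critical sequence of the iteration of $L[\mu]$ is not the class of Silver indiscernibles.)

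The second step is a genuine gap rather than a routine one: the claim that ``any iterate of $L[\mu]$ computes $L[\mu]$ back'' is false as stated --- $\mu\notin L[\mu_\lambda]$, $\kappa$ is not even measurable there, and the map $j_\lambda$ is not available in $L[S_m]$, so there is nothing to invert and no ``common core'' to extract from the iterate alone. What the paper does instead is build, inside $L[S_m]$, a set $A$ of $(\kappa^+)^V$-many ordinals above $\lambda$ that are fixed by $j_\lambda$ (strong limit cardinals of cofinality $(\kappa^+)^V$, isolated using $S_m$), and take the Skolem hull $X$ of $\kappa\cup A$ in some $L_\theta[\mu_\lambda]$. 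Because $X\subseteq j_\lambda\image L[\mu]$ and $j_\lambda\image L[\mu]$ contains no ordinals strictly between $\kappa$ and $\lambda$, the transitive collapse sends $\lambda$ to $\kappa$ and has the form $L_{\bar\theta}[\nu]$ with $\nu$ a normal measure on $\kappa$; Kunen's Uniqueness Theorem then identifies this as $L_{\bar\theta}[\mu]$, which puts $\mu$ into $L[S_m]$. Without something like this hull-plus-condensation argument (and the explicit appeal to Kunen uniqueness to identify the collapsed measure), your reconstruction of $L[\mu]$ from the iterate does not go through.
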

The proof of this theorem uses techniques from the proof of Kunen's Uniqueness Theorem (\cite{Kunen:dissertation} and \cite{Kunen:IteratedUltrapowers}, for a modern account,
see for example \cite[Theorem 19.14]{Jech:SetTheory}) and is following the idea of Theorem 9.1 in \cite{KennedyMagidorVaananen:extendedLogics}.
\begin{proof}[Proof of Theorem \ref{thm:LmuSC}]
  We will first argue that for some sufficiently large $\lambda$, the
  normal measure $\mu_\lambda$ on the $\lambda$-th iterated ultrapower
  of $L[\mu]$ by $\mu$ is in $L[S_m]$, and then find in $L[S_m]$ an
  elementary substructure of size $(\kappa^+)^V$ of an initial segment
  $L_\theta[\mu_\lambda]$ of the iterate that will collapse to $L_{\bar\theta}[\mu]$.

We can assume that $\mu\in L[\mu]$. We work in $V$ and fix $m\geq
1$. Let $\lambda \gg \kappa^{+}$ be a strong limit cardinal with
unboundedly many $\alpha$ such that $(\alpha,\lambda)\in S_m$. Let
$j_\lambda: L[\mu]\to L[\mu_\lambda]$ be the embedding given by the $\lambda$-th iterated ultrapower of $L[\mu]$ by $\mu$, so that in $L[\mu_\lambda]$, $\mu_\lambda$ is a normal measure on the cardinal $\lambda=j_\lambda(\kappa)$ (by \cite[Corollary 19.7]{Kanamori:HigherInfinite}, for all cardinals $\lambda>\kappa^+$, the $\lambda$-th element of the critical sequence is $\lambda$). Let $\la \kappa_\xi \st \xi <\lambda\ra$ be the critical sequence of the iteration by $\mu$. Finally, let $\cF$ denote the filter generated by the tails $$A_\xi=\{\eta < \lambda \st \xi \leq \eta \text{ such that } (\eta,\lambda)\in S_m\}$$ for $\xi < \lambda$. We will argue that $L[\mu_\lambda]=L[\cF]$. It will follow that $L[\mu_\lambda]\subseteq L[S_m]$ since $L[S_m]$ can compute $L[\cF]$ from $S_m$.

First, let's argue that $\mu_\lambda\subseteq \cF$. Suppose
$X \in \mu_\lambda$. Then there must be a $\zeta < \lambda$ such that
$\{ \kappa_\xi \st \zeta \leq \xi < \lambda\} \subseteq X$ (see
\cite[Lemma 19.5]{Kanamori:HigherInfinite}). As $\kappa_\eta = \eta$ for every
sufficiently large cardinal $\eta < \lambda$ (\cite[Corollary
19.7]{Kanamori:HigherInfinite}), it follows that
\[\{\eta <\lambda\st \zeta^\prime \leq \eta \text{ and } \eta \text{
    is a cardinal}\} \subseteq X\] for some $\zeta^\prime <
\lambda$. In particular, $A_{\zeta^\prime} \subseteq X$, and thus
$X \in \cF$. But now since $\mu_\lambda$ is an ultrafilter in
$L[\mu_\lambda]$ and $\cF$ is a filter, it follows that
$\cF\cap L[\mu_\lambda]\subseteq\mu_\lambda$ and hence
$\cF\cap L[\mu_\lambda]=\mu_\lambda$. From here it is not difficult to
see that $L[\mu_\lambda]=L[\cF]$, and hence
$L[\mu_\lambda]\subseteq L[S_m]$.

Now we will define in $L[S_m]$, a
sequence of length $(\kappa^+)^V$ whose elements will generate the
desired elementary substructure. Recall that if $\eta$ is a strong limit cardinal of cofinality greater
than $\kappa$ and moreover $\eta>\lambda$ (the length of the
iteration), then $j_\lambda(\eta)=\eta$ (see \cite[Corollary
19.7]{Kanamori:HigherInfinite}).  Let $\lambda^* \gg \lambda$ be a
strong limit cardinal of cofinality greater than $(\kappa^+)^V$ such that the set
$S_m^{\lambda^*} = \{ \eta \st (\eta,\lambda^*) \in S_m \}$ is unbounded in $\lambda^*$. Let
$\eta_0$ be the $(\kappa^+)^V$-th element of $S_m^{\lambda^*}$ above
$\lambda$. Inductively, let $\eta_{\xi+1}$ be the $(\kappa^+)^V$-th
element of $S_m^{\lambda^*}$ above $\eta_\xi$ and
$\eta_\delta = \bigcup_{\xi < \delta} \eta_\xi$ for limit ordinals
$\delta$. Let $A = \{ \eta_{\xi +1} \st \xi < (\kappa^+)^V\}$. As
$(\kappa^+)^V$ is regular (in $V$), it follows that
$\cf^V(\eta_{\xi+1})=(\kappa^+)^V$ for all $\eta_{\xi+1} \in
A$. Therefore each element of $A$ is fixed by the iteration embedding
$j_\lambda$.

Fix $\theta$ above the supremum of $A$. Let
$X\prec L_\theta[\mu_\lambda]$ be the Skolem hull of $\kappa\cup A$ in
$L_\theta[\mu_\lambda]$, and note that $X \in L[S_m]$. Let $N$ denote
the Mostowski collapse of $X$, and let
$$\sigma: N \rightarrow X \prec L_\theta[\mu_\lambda]$$ be the inverse
of the collapse embedding. Note that $\lambda$ is in $X$ as it is
definable as the unique measurable cardinal in
$L_\theta[\mu_\lambda]$. In fact, $\sigma(\kappa) = \lambda$ by the
following argument. As $X$ is generated by elements from
$j_\lambda\image L[\mu]$, it is contained in
$j_\lambda\image L[\mu]\prec L[\mu_\lambda]$. But there is no
$\gamma \in j_\lambda\image L[\mu]$ with $\kappa < \gamma < \lambda$, so
$\lambda$ has to collapse to $\kappa$. Finally, since $|A| = (\kappa^+)^V$
and $\sigma(\kappa)=\lambda$, the collapse $N$ has the form
$L_{\bar\theta}[\nu]$ with $\nu$ a normal measure on $\kappa$ and
$\bar\theta$ an ordinal of size $(\kappa^+)^V$. By Kunen's Uniqueness
Theorem (see for example \cite[Theorem 19.14]{Jech:SetTheory}),
$N = L_{\bar\theta}[\mu]$, and thus $L_{\bar\theta}[\mu] \in L[S_m]$. So
$L[\mu]\subseteq L[S_m]$, as desired.
\end{proof}

\begin{cor}\label{cor:SCmeascard} $\,$
  \begin{itemize}
  \item[(1)] We have $L[S^{L[\mu]}] = L[\mu]$. In particular, it is consistent
    that the stable core has a measurable cardinal.
  \item[(2)] Let $K^{DJ}$ denote the Dodd-Jensen core model below a
    measurable cardinal. Then $K^{DJ} \subseteq L[S]$, and hence
    $L[S^{K^{DJ}}]=K^{DJ}$.
  \item[(3)] If $0^\dagger$ exists, then $0^\dagger \in L[S]$.
  \end{itemize}
\end{cor}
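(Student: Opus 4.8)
The plan is to derive all three statements from Theorem~\ref{thm:LmuSC} together with standard facts about $L[\mu]$ and the Dodd--Jensen core model, mostly by relativizing the theorem to the relevant inner model. For (1), note first that $S^{L[\mu]}$ is a definable class of $L[\mu]$, so trivially $L[S^{L[\mu]}]\subseteq L[\mu]$. For the reverse inclusion I would relativize Theorem~\ref{thm:LmuSC} to $L[\mu]$: inside $L[\mu]$ the cardinal $\kappa$ is measurable and $L[\mu]$ is its own canonical inner model for $\kappa$, so the theorem, applied internally, yields $L[\mu]\subseteq L[(S_m)^{L[\mu]}]\subseteq L[S^{L[\mu]}]$ for every $m\geq 1$. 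The two inclusions give $L[S^{L[\mu]}]=L[\mu]$.

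For (2) the key external fact is that the Dodd--Jensen core model below a measurable satisfies $K^{DJ}\subseteq L[\mu]$ for the canonical $L[\mu]$ of the measurable in $V$. Combined with Theorem~\ref{thm:LmuSC} this gives $K^{DJ}\subseteq L[\mu]\subseteq L[S_m]\subseteq L[S]$, hence $K^{DJ}\subseteq L[S]$. For the equality $L[S^{K^{DJ}}]=K^{DJ}$ I would relativize once more: the inclusion $L[S^{K^{DJ}}]\subseteq K^{DJ}$ is automatic, and since $K^{DJ}$ is its own core model, $(K^{DJ})^{K^{DJ}}=K^{DJ}$, while $K^{DJ}$ carries a measurable cardinal (because $V$ does, so there is an inner model with a measurable). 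Applying Theorem~\ref{thm:LmuSC} inside $K^{DJ}$ to its own canonical $(L[\mu'])^{K^{DJ}}$ then gives $K^{DJ}=(K^{DJ})^{K^{DJ}}\subseteq (L[\mu'])^{K^{DJ}}\subseteq L[S^{K^{DJ}}]$.

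Part (3) is the most delicate, since $0^\dagger$ can exist with no measurable cardinal in $V$, so Theorem~\ref{thm:LmuSC} does not apply off the shelf. The point is that its proof uses measurability in $V$ only through the iterability of $L[\mu]$, which the existence of $0^\dagger$ already guarantees: the length-$\lambda$ iteration $j_\lambda\colon L[\mu]\to L[\mu_\lambda]$ is wellfounded, the critical sequence still satisfies $\kappa_\eta=\eta$ for sufficiently large $V$-cardinals $\eta<\lambda$, and the tail filter $\cF$ read off from $S_m$ still captures $\mu_\lambda$; so the argument goes through and delivers $L[\mu]\subseteq L[S]$. Granting this, I would compute $0^\dagger$ as in Proposition~\ref{prop:zeroSharp}: since $0^\dagger$ exists, the uncountable $V$-cardinals above $\kappa$ form a class of Silver indiscernibles for $L[\mu]$, and the strong limit cardinals among them are definable in $L[S]$ by Corollary~\ref{cor:stronglimDefinable}. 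As $L[\mu]\subseteq L[S]$ and $\mu,\kappa\in L[S]$, the model $L[S]$ can evaluate $L[\mu]\models\varphi(\kappa,i_0,\dots,i_{n-1})$ through sufficiently tall initial segments $L_\theta[\mu]\in L[S]$, for an increasing sequence $i_0<i_1<\cdots$ of such indiscernibles; the resulting Ehrenfeucht--Mostowski type is exactly $0^\dagger$, so $0^\dagger\in L[S]$.

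The main obstacle is the verification underlying (3): that the proof of Theorem~\ref{thm:LmuSC} really survives replacing ``$\kappa$ measurable in $V$'' by ``$L[\mu]$ iterable''. Concretely, one must confirm that the synchronization of the critical points of the iteration with the $V$-cardinals singled out by $S_m$ --- on which both $\cF\cap L[\mu_\lambda]=\mu_\lambda$ and the collapse of the Skolem hull to $L_{\bar\theta}[\mu]$ rely --- depends only on wellfoundedness of the iterates and the counting argument of \cite[Corollary~19.7]{Kanamori:HigherInfinite}, all available from $0^\dagger$. A secondary check is the core-model fact invoked in (2) that $K^{DJ}$ has a measurable cardinal whenever $V$ does.
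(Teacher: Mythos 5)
Parts (1) and (3) are essentially the paper's argument. Part (1) is exactly the relativization of Theorem~\ref{thm:LmuSC} to $V=L[\mu]$. For (3) the paper likewise reads $0^\dagger$ off from a sequence of strong limit cardinals of $V$ as in Proposition~\ref{prop:zeroSharp}, using $L[\mu]\subseteq L[S]$; your explicit check that the proof of Theorem~\ref{thm:LmuSC} needs only the iterability of the canonical $L[\mu]$ (which $0^\dagger$ supplies) rather than an actual measurable cardinal in $V$ is a point the paper glosses over, and it is the right thing to verify.

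Part (2), however, has a genuine gap. First, your proof of $K^{DJ}\subseteq L[S]$ passes through $K^{DJ}\subseteq L[\mu]$ and so presupposes a measurable cardinal in $V$, which the corollary does not assume: $K^{DJ}=L[\mathcal{M}]$ for the class $\mathcal{M}$ of Dodd--Jensen mice is defined outright in \ZFC. Second, and more seriously, your derivation of $L[S^{K^{DJ}}]=K^{DJ}$ rests on the claim that $K^{DJ}$ has a measurable cardinal because $V$ does. This is false: it is a basic Dodd--Jensen theorem that $K^{DJ}$ has no measurable cardinals (a total measure in $K^{DJ}$ would yield a mouse beyond all Dodd--Jensen mice), and in particular measurable cardinals of $V$ are never measurable in $K^{DJ}$. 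Consequently Theorem~\ref{thm:LmuSC} cannot be applied inside $K^{DJ}$, and the chain $(K^{DJ})^{K^{DJ}}\subseteq (L[\mu'])^{K^{DJ}}\subseteq L[S^{K^{DJ}}]$ has no content. The paper's route avoids both problems: it shows directly that every Dodd--Jensen mouse $M=\la J_\alpha[U],\in,U\ra$ belongs to $L[S]$ by running the iteration argument of Theorem~\ref{thm:LmuSC} on $M$ itself --- some $\lambda$-th iterate $M_\lambda$ is captured via the tail filter read off from $S_m$, and then $M$ is recovered inside $L[M_\lambda]$ as the transitive collapse of $\Hull_1^{M_\lambda}(\rho\cup j_\lambda(p))$, using the fine-structural hull condition built into the definition of a Dodd--Jensen mouse. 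Since this is a theorem of \ZFC, it relativizes to $K^{DJ}$, where $(K^{DJ})^{K^{DJ}}=K^{DJ}$, and together with the trivial inclusion $L[S^{K^{DJ}}]\subseteq K^{DJ}$ it yields $L[S^{K^{DJ}}]=K^{DJ}$ without a measurable cardinal anywhere.
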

\begin{proof} (1) follows immediately from Theorem \ref{thm:LmuSC} by applying it inside $V = L[\mu]$.

For (2) we first recall the definition of the Dodd-Jensen core model
$K^{DJ}$ from \cite{DoddJensen:CoreModel}. We call a transitive model
$M$ of the form $M=\la J_\alpha[U],\in,U\ra$ a \emph{Dodd-Jensen
  mouse} if $M$ satisfies that $U$ is a normal measure on some $\kappa
< \alpha$, all of the iterated ultrapowers of $M$ by $U$ are
well-founded, and $M$ has a fine structural property implying that $M = \Hull_1^M(\rho \cup p)$ (the $\Sigma_1$-Skolem closure of $M$) for some ordinal $\rho < \kappa$ and some finite set of parameters $p \subseteq \alpha$ (see \cite[Definition 5.4]{DoddJensen:CoreModel}). The Dodd-Jensen core model $K^{DJ} = L[\mathcal{M}]$, where $\mathcal{M}$ is the class of all such Dodd-Jensen mice (see \cite[Definition 6.3]{DoddJensen:CoreModel} or, for a modern account, \cite{Mitchell:BeginningInnerModelTheory}). So we need to argue that every such mouse $M$ is in $L[S]$. We essentially follow the proof of Theorem \ref{thm:LmuSC} to show that some $\lambda$-th iterate $M_\lambda$ of $M$ is in $L[S]$. Then we argue that
$M \in L[M_\lambda]$ as, by $\Sigma_1$-elementarity of $j_\lambda$, $M$ is isomorphic to $\Hull_1^{M_\lambda}(\rho \cup
  j_\lambda(p))$. Hence, $M \in L[S]$.

For (3), since the strong limit cardinals of $V$ are definable in $L[S]$, the result for $0^\dagger$ follows from Theorem \ref{thm:LmuSC} as in the proof of Proposition~\ref{prop:zeroSharp}.
\end{proof}

The argument above relied on the fact that for a certain club $C$ of
cardinals, $L[\mu]\subseteq L[C]$ and $L[C]\subseteq L[S]$. The
following result shows that this argument with one club $C$ cannot be
pushed further to show that stronger large cardinals are in the stable
core. Given a club $C$, we will denote by $\hat C$, the collection of all successor elements of $C$ together with its least element.

\begin{thm}[\cite{Welch:ClubClassHartigQuantifierModel}]\label{thm:L[C]}
Suppose that $C$ is a class club of uncountable cardinals. Then there is an $\Ord$-length iteration of the mouse $m_1^{\#}$ such that in the direct limit model $M_C$ (truncated at $\Ord$), the measurable cardinals are precisely the elements of $\hat C$.
\end{thm}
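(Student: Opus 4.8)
The plan is to realize $M_C$ as the direct limit of a carefully chosen $\Ord$-length linear iteration of $m_1^\#$, using only the (total) measures on the sequences of the intermediate models. Since $m_1^\#$ exists it is fully iterable, so every such iterate is wellfounded, direct limits at limit stages are wellfounded, and it makes sense to form the $\Ord$-length limit and truncate it at $\Ord$ to obtain a proper-class premouse $M_C$. The strategy has two ingredients. First, the top measure is used purely to keep the process going: each application of the (image of the) top extender advances the current critical point and refreshes the supply of measurable cardinals lying below it, so that over $\Ord$-many steps we produce a proper class of measurable cardinals with critical points cofinal in $\Ord$. Second, the measures below the top extender are used to \emph{position} these measurables, via the Kunen-style sliding principle already invoked in the proof of Theorem~\ref{thm:LmuSC}: iterating a normal measure $\theta$-many times sends its critical point to $\theta$ whenever $\theta$ is a sufficiently large cardinal. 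Composing such moves lets us drive a given measurable cardinal of an intermediate model up to any prescribed target cardinal while fixing everything below its original critical point.

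I would organize the construction as a recursion along $C$. Write $C'$ for the class of limit points of $C$, so that $\hat C = C\setminus C'$. Enumerate $C$ increasingly and maintain, as an invariant at each stage, that the measurable cardinals of the current model below the current critical point have already been slid onto the correct initial segment of $\hat C$, while the remaining (higher) measurables serve as ``scratch'' to be positioned later. At a successor step I place the next element $d\in\hat C$ by sliding the least unplaced scratch measurable up to $d$; crucially, once $d$ carries a measure on the sequence and all subsequent critical points are chosen above $d$, the tail embeddings fix $d$ and preserve its measure, so $d$ stays measurable in $M_C$. The limit points $e\in C'$ are never assigned a measure; they arise instead as suprema of placed measurables, so they are limits of measurables in $M_C$ but carry no total measure there, hence are non-measurable. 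At limit stages of the recursion I pass to the direct limit, whose wellfoundedness is guaranteed by iterability, and I check continuity so that a limit of limit points is again correctly a non-measurable limit of measurables.

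The verification is then local: for any fixed $\gamma$, all tail embeddings eventually have critical point above $\gamma$, so the measurable cardinals of $M_C$ below $\gamma$ are already computed at some stage of the iteration; by the invariant they are exactly $\hat C\cap\gamma$, and letting $\gamma\to\Ord$ gives that the measurable cardinals of $M_C$ are precisely $\hat C$. The main obstacle is the matching between the rigid supply of measurables generated by the top extender and the arbitrary thin pattern prescribed by $C$: one must interleave the top-measure steps (which advance the critical point and thereby pin down which ordinals become non-measurable limits of measurables) with the sliding steps (which place individual measurables) delicately enough that the non-measurable limits of measurables of $M_C$ are \emph{exactly} the elements of $C'$ and the measurables are \emph{exactly} $\hat C$, with no spurious measurable and no spurious accumulation point, and so that everything is continuous at limits. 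Controlling the accumulation structure at the critical points---ensuring that applying the top measure does not force the ``wrong'' (too thick) pattern of measurables below its critical point at the points of $C'$---is the delicate heart of the argument, and is where the fine-structural, partial-extender reconstrual of $m_1^\#$ referred to in the introduction is likely to be needed.
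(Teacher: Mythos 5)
Your proposal follows essentially the same approach as the paper's proof: slide measurables of $m_1^{\#}$ onto the elements of $\hat C$ by iterating each relevant measure the appropriate (cardinal) number of times, apply the top measure precisely at the limit stages $\xi=\alpha_\xi$ where the supply of measurables below the top critical point is exhausted, and verify locally via the tail embeddings that limit points of $C$ end up as non-measurable limits of measurables. The ``delicate heart'' you flag at the end is handled in the paper not by fine structure but by a simple cardinality observation---$M_\xi$ has size at most $\alpha_\xi$, so the new top critical point is automatically below the next target $\alpha_{\xi+1}$---together with the fact that the new measurables produced by a top-measure step all lie strictly above the old critical point.
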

\begin{proof}
Let $\bar\kappa$ be the critical point of the top measure of $m_1^{\#}$. Let $\la \alpha_\xi\mid\xi\in\Ord\ra$ be the increasing enumeration
of $C$.

Iterate the first measurable cardinal $\kappa_0$ of $m_1^{\#}$
  $\alpha_0$-many times, so that $\kappa_0$ iterates to $\alpha_0$, and
  let $M_{\alpha_0}$ be the iterate. Since $m_1^{\#}$ is countable, $M_{\alpha_0}$ has
  cardinality $\alpha_0$, and hence the critical point of the top measure $\bar\kappa_{\alpha_0}$, the image of $\bar\kappa$ in $M_{\alpha_0}$,
  is below $\alpha_1$. In particular, the next measurable cardinal
  $\kappa_1$ above $\alpha_0$ in $M_{\alpha_0}$ is below $\alpha_1$ and we can
  iterate it to $\alpha_1$ by iterating it $\alpha_1$-many
  times. Repeat this for all successor ordinals $\xi$ and take direct
  limits along the iteration embeddings at limit stages with the
  following exception.

  Suppose we have carried out the construction for a limit $\xi$-many steps
  resulting in the model $M_{\xi}$, where $\bar\kappa_{\xi}$ is the critical point of the top measure, in which the measurable cardinals limit up to $\bar\kappa_\xi$. In this case, we must have $\xi=\alpha_\xi$. When this happens we have run out of room and don't have a measurable cardinal to iterate to the next element $\alpha_{\xi+1}$ of $\hat C$. To make more space, in the next step, we iterate up the top measure to obtain the model $M_{\xi+1}$ with more measurable cardinals. By cardinality considerations, the critical point $\bar\kappa_{\xi+1}$ of the top measure is obviously below $\alpha_{\xi+1}$. Hence, we can continue the construction, iterating the least measurable cardinal $\kappa_{\xi+1}$ above
  $\xi$ in $M_{\xi+1}$ to $\alpha_{\xi+1}$.

  Let $M$ be the resulting model obtained as the direct limit along
  the iteration embeddings and let $M_C$ be $M$ truncated at $\Ord$, which is the cardinal on which the top measure of $M$ lives. The construction ensures that we hit every element of $\hat C$ along the way, so that the measurable
  cardinals in $M_C$ are exactly the elements $\hat C$.
\end{proof}
A more elaborate version of this iteration argument is
going to be used in Section \ref{sec:Lcard} to generalize the results of \cite{Welch:ClubClassHartigQuantifierModel} to a finite number of specially nested clubs.
\begin{cor}\label{cor:mouseNotInL[C]}
  If $C$ is a class club of uncountable $V$-cardinals, then $m_1^{\#}\notin L[C]$.
\end{cor}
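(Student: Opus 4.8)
The plan is to argue by contradiction, using Theorem~\ref{thm:L[C]} to turn the assumption $m_1^{\#}\in L[C]$ into the statement that $L[C]$ is itself a (truncated) iterate of $m_1^{\#}$, which is impossible because such an iterate cannot contain the sharp $m_1^{\#}$ that generated it. So suppose toward a contradiction that $m_1^{\#}\in L[C]$. Since both $m_1^{\#}$ and the predicate $C$ are available in $L[C]$, and since the iteration of Theorem~\ref{thm:L[C]} is completely determined by $m_1^{\#}$ together with the ordinals enumerating $C$ (the only data used are the measures on the sequence of $m_1^{\#}$ and the iteration lengths read off from $C$), the whole construction can be carried out inside $L[C]$; moreover, iterated ultrapowers and direct limits are absolute, so the model $M_C$ computed in $L[C]$ agrees with the one computed in $V$. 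Hence $M_C$ is a definable inner model of $L[C]$ with $M_C\subseteq L[C]$, and by Theorem~\ref{thm:L[C]} its measurable cardinals are exactly the elements of $\hat C$.

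First I would show that in fact $M_C=L[C]$. The class of measurable cardinals of $M_C$ is definable over $M_C$ without parameters, and by Theorem~\ref{thm:L[C]} this class is precisely $\hat C$. Since $\hat C$ consists of the successor elements of $C$ together with its least element, the limit points of $C$ are exactly the limit points of $\hat C$, and therefore $C=\hat C\cup\{\text{limit points of }\hat C\}$ is definable over $M_C$ as well. As $M_C$ is an inner model over which the class $C$ is definable, it follows that $L[C]\subseteq M_C$, and together with $M_C\subseteq L[C]$ this gives $M_C=L[C]$. In particular the assumption now yields $m_1^{\#}\in M_C$.

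It remains to derive a contradiction from $m_1^{\#}\in M_C$, and this is where the main work lies. The point is that $M_C$ is a proper-class iterate of $m_1^{\#}$ truncated at the critical point of its top measure, so $m_1^{\#}$ plays the role of the sharp for $M_C$ exactly as $0^{\#}$ does for $L$: iterating the top measure of $m_1^{\#}$ out through the ordinals produces $M_C$ together with its class $\hat C$ of indiscernibles. A model cannot contain its own sharp---this is the analog of $0^{\#}\notin L$---since otherwise $m_1^{\#}\in M_C$ would allow $M_C$, which satisfies $\ZFC$, to define a nontrivial elementary embedding $j\colon M_C\to M_C$, contradicting Suzuki's theorem that no $\ZFC$ model admits such a definable embedding. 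Equivalently, by the minimality of $m_1^{\#}$ it cannot be an element of any of its own proper iterates. Either way we contradict $m_1^{\#}\in M_C$, completing the proof.

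The hard part will be this last step: making precise, via the fine-structural theory of $m_1^{\#}$ and its iterations, that the sharp $m_1^{\#}$ is not an element of the truncated iterate $M_C$. This rests on the minimality of $m_1^{\#}$ among mice that are sharps for a proper class of measurables, together with the iterability used in Theorem~\ref{thm:L[C]}; by contrast, the absoluteness of the iteration between $L[C]$ and $V$ and the recovery of $C$ from $\hat C$ are comparatively routine.
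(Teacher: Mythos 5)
Your setup matches the paper's: assume $m_1^{\#}\in L[C]$, run the iteration of Theorem~\ref{thm:L[C]} inside $L[C]$, recover $C$ from the measurables of $M_C$ to get $L[C]\subseteq M_C$, and conclude $L[C]=M_C$. The problem is the final step, which you yourself flag as ``the hard part'': from $m_1^{\#}\in M_C$ you claim $M_C$ can define a nontrivial elementary $j\colon M_C\to M_C$ and then invoke Suzuki's theorem. As written this is a genuine gap. The indiscernible-shifting embedding one gets from a sharp lives on the model obtained by iterating the \emph{top} measure of $m_1^{\#}$ out through the ordinals; that model is a different iterate of $m_1^{\#}$ whose measurables sit on the top-measure critical sequence, not on $\hat C$, so it is not $M_C$, and a definable embedding of a proper inner model into itself is no contradiction (compare $0^{\#}$ giving a definable $j\colon L\to L$ in $V$). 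Likewise, the iteration maps you can build from $m_1^{\#}$ and $C$ inside $L[C]$ go from $M_C$ to \emph{other} iterates (e.g.\ the one associated to a shifted club), not back into $M_C$ itself. The alternative phrasing ``by minimality, $m_1^{\#}$ is not an element of any of its proper iterates'' is exactly the statement to be proved, not an argument for it.

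The paper closes the argument much more cheaply, with no fine structure and no embedding: every critical point used in the iteration is at least the first measurable cardinal of $m_1^{\#}$, so nothing below it moves and $\omega_1^{M_C}=\omega_1^{m_1^{\#}}$. Since $m_1^{\#}$ is a countable structure in $L[C]$ (being the minimal such mouse, it is a hull of finitely many parameters, so any model containing it sees it as countable), the ordinal $\omega_1^{m_1^{\#}}$ is countable in $L[C]$. But $L[C]=M_C$ would then compute its own $\omega_1$ to be one of its countable ordinals, which is absurd. If you replace your Suzuki step with this cardinality mismatch, your proof becomes the paper's.
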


\begin{proof}
Suppose towards a contradiction that $m_1^{\#}\in L[C]$.  Iterate $m_1^{\#}$ inside $L[C]$ to a model $M$ as in the proof of Theorem~\ref{thm:L[C]}, and let $M_C$ be the truncation of $M$ at $\Ord$. In
  particular, $C$ is definable in $M_C$ by considering the closure of
  its measurable cardinals. It follows that $L[C]$ is a definable
  sub-class of $M_C$, so that $L[C] = M_C$. But this is impossible
  because $m_1^{\#}$ is a countable model (in $L[C]$), which means, in
  particular, that $\omega_1^{m_1^{\#}}=\omega_1^{M_C}$ is countable in
  $L[C]$.
\end{proof}

In the last section of the article we will show that, unlike $L[\Card]$, the stable core, given sufficiently large large cardinals, can have inner models with a proper class of $n$-measurable cardinals for any $n<\omega$.

Now we can say more precisely what Welch showed about the model
$L[\Card]$ in \cite{Welch:ClubClassHartigQuantifierModel}. Let $M_C$,
for the class club $C$ of limit cardinals, be the iterate of
$m_1^{\#}$ (truncated at $\Ord$) obtained as in the proof of Theorem
\ref{thm:L[C]} in which the measurable cardinals are precisely the
cardinals of $V$ of the form $\aleph_{\omega\cdot\alpha+\omega}$
(namely elements of $\hat C$). Let $U_\alpha\subseteq P^{M_C}(\aleph_{\omega\cdot\alpha+\omega})$ be the iteration measures on $\aleph_{\omega\cdot\alpha+\omega}$ in $M_C$, and note that a subset of $\aleph_{\omega\cdot\alpha+\omega}$ in $M_C$ is in $U_\alpha$ if and only if it contains some tail of the cardinals. Let $\vec U=\la U_\alpha\mid\alpha\in\Ord\ra$. The model $M_C$ has the form $L[\vec U]$ because it is an iterate of the mouse $m^{\#}_1$. Let $W_\alpha\subseteq P^{L[\Card]}(\aleph_{\omega\cdot\alpha+\omega})$ consist of
all subsets of $\aleph_{\omega\cdot\alpha+\omega}$ in $L[\Card]$ containing some tail of cardinals $\aleph_{\omega\cdot\alpha+n}$, $n<\omega$, and let $\vec W$ be the sequence of the $W_\alpha$. Now it is easy to see that $L[\vec W]=L[\vec U]$, and hence, since $\vec W$ is definable in $L[\Card]$, $M_C$ is contained in $L[\Card]$. Let $f$ be a function
on $\hat C$ such that
$$f(\omega\cdot\alpha+\omega)=\la \omega\cdot\alpha+n\mid n<\omega\ra.$$ We clearly have that $L[\Card]=L[f]$, and also
$L[f]=L[\vec W][f]=M_C[f]$ because the sequence $\vec W$ can be
recovered from $f$. Thus, $L[\Card]=M_C[f]$, and it turns out that in
some sense which we will explain in detail in
Section~\ref{sec:characterization}, $M_C[f]$ is a Prikry-type forcing
extension of $M_C$ adding Prikry sequences to all its measurable
cardinals.

Note that in this
construction we iterated the measurable cardinals to elements of $\hat C$, where $C$ is the club of limit cardinals, instead of to all successor cardinals,
because we need to have enough cardinals in between to be able to use
them to define the measures in $\vec U$, so that $L[\vec U]$ is
contained in $L[C]$. If on the other hand, we iterate the measurable
cardinals to all successor cardinals,
then we can get an inclusion in the other direction: $L[C]$ is
contained in the iterate $M_C$.

We will now generalize the result that the stable core of $L[\mu]$ is equal to $L[\mu]$ to show that if $\vec U$ is a discrete proper class of normal measures, then stable core of $L[\vec U]$ is $L[\vec U]$. It follows that the stable core can have a proper class of measurable cardinals.

\begin{thm}
If $\vec U$ is a discrete proper class sequence of normal measures, then $L[S]^{L[\vec U]} = L[\vec U]$. In particular, it is consistent that the stable core has a proper class of measurable cardinals.
\end{thm}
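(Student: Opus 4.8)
The plan is to generalize the proof of Theorem~\ref{thm:LmuSC}, handling the measurable cardinals one at a time using the discreteness of $\vec U$ to keep the local analysis essentially identical to the single-measurable case. Fix $m \geq 1$; I will show $L[\vec U] \subseteq L[S_m]$, from which $L[S^{L[\vec U]}] = L[\vec U]$ follows since trivially $L[S] \subseteq \HOD$ gives the reverse inclusion when we work inside $V = L[\vec U]$ and observe that $\vec U$ is ordinal-definable there via the structure. Discreteness means that if $\la \kappa_\alpha \mid \alpha \in \Ord\ra$ enumerates the measurable cardinals carrying the $U_\alpha$, then each $\kappa_\alpha$ is not a limit of measurables, so around each $\kappa_\alpha$ there is room to run the Kunen-style iteration/collapse argument in isolation.

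First I would recover, for each $\alpha$, the measure $U_\alpha$ (or rather a sufficiently long iterate of it) inside $L[S_m]$. The key point is that the argument of Theorem~\ref{thm:LmuSC} is \emph{local}: to capture $U_\alpha$ on $\kappa_\alpha$, I iterate $L[\vec U]$ by $U_\alpha$ alone, sending $\kappa_\alpha$ up to some large strong limit $\lambda$ chosen so that unboundedly many $\eta < \lambda$ satisfy $(\eta,\lambda) \in S_m$; because $\vec U$ is discrete, iterating the single measure $U_\alpha$ does not disturb the measures below $\kappa_\alpha$ (they are moved only trivially once we are above them), so the critical sequence of this iteration again consists eventually of all cardinals in an interval, and the tail filter $\cF_\alpha$ generated by the sets $A_\xi = \{\eta < \lambda \mid \xi \leq \eta,\ (\eta,\lambda)\in S_m\}$ recaptures the iterated measure exactly as before. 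Thus the iterate $L[\vec U_{<\alpha}, \mu_\lambda]$ — or the relevant initial segment carrying the single top measure $\mu_\lambda$ — is computable from $S_m$ together with the lower part of the sequence, which I build up by recursion on $\alpha$.

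Next I would run the Skolem-hull collapse to pull the genuine measure $U_\alpha$ back down. As in Theorem~\ref{thm:LmuSC}, I pick $\lambda^* \gg \lambda$ and a set $A$ of $(\kappa_\alpha^+)^V$-many points of cofinality $(\kappa_\alpha^+)^V$ fixed by the iteration embedding, form the hull $X \prec L_\theta[\mu_\lambda]$ of $\kappa_\alpha \cup A$, and collapse; the image of the top measurable collapses back to $\kappa_\alpha$ and Kunen's Uniqueness Theorem identifies the collapse as $L_{\bar\theta}[U_\alpha]$ (now in the appropriate relative sense, over the already-recovered lower measures). Performing this for every $\alpha$ and assembling the results by recursion on $\Ord$ yields each $U_\alpha \in L[S_m]$, hence the full sequence $\vec U \in L[S_m]$, so $L[\vec U] \subseteq L[S_m]$.

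The main obstacle I expect is the bookkeeping in the recursion: unlike the single-measurable case, I must verify that recovering $U_\alpha$ does not require information about the $U_\beta$ for $\beta > \alpha$ (so that the recursion is well-founded) and that the collapse argument at stage $\alpha$ correctly reconstructs $U_\alpha$ \emph{relative} to the already-known $\vec U_{<\alpha}$ rather than over bare $L$. Discreteness is exactly what makes this work: since $\kappa_\alpha$ is not a limit of measurables, the iterate by $U_\alpha$ alone behaves, on the interval around $\kappa_\alpha$, just like the iterate of $L[\mu]$ by $\mu$, and the hull/collapse sees only $U_\alpha$ as its unique measurable in the relevant window. The one point requiring genuine care is confirming that the Uniqueness-Theorem step goes through over the predicate for $\vec U_{<\alpha}$, i.e. that iterability and the fine-structural identification are preserved when we carry the lower measures as an inert parameter; this is where I would spend most of the verification.
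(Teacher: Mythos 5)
Your strategy---recover each $U_\alpha$ separately by iterating it up to a $\lambda$ where the tail filter on $\{\eta \mid (\eta,\lambda)\in S_m\}$ regenerates the iterated measure, then pull it back via a Skolem hull and Kunen's Uniqueness Theorem---is not the paper's argument, and it has a genuine gap at exactly the point you flag and then defer. Kunen's Uniqueness Theorem does not identify the collapse $N$ of your hull once the model carries a long sequence of measures: the uniqueness proof rests on iterating the two candidate measures past each other, and for a sequence of measures this becomes the full comparison lemma for premice with measure sequences, i.e.\ core model theory, not a routine ``carry $\vec U_{<\alpha}$ as an inert parameter'' verification. The natural extension of the Kunen-style argument reaches only sequences $\la \mu_\xi\mid\xi<\eta\ra$ whose length $\eta$ is below the least measurable in the sequence---a condition violated as badly as possible by a proper class of measures. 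There is also a well-foundedness problem in your recursion that discreteness does not repair: the $\lambda$-th iterate of $L[\vec U]$ by $U_\alpha$ alone is a model of the form $L[\vec U']$ where $\vec U'$ contains the images of \emph{all} the measures $U_\beta$ for $\beta>\alpha$; the tail-filter trick hands $L[S_m]$ only the single coordinate $\mu_\lambda$ of $\vec U'$, whereas in Theorem~\ref{thm:LmuSC} the whole iterate was $L[\mu_\lambda]$ and hence recoverable from that one filter. So to compute the stage-$\alpha$ iterate inside $L[S_m]$ you already need the measures above $\kappa_\alpha$, which your recursion has not yet produced.

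The paper's proof avoids both problems by working with core models rather than iterating $\vec U$ directly: inside $V=L[\vec U]$ it forms $K_0=K^{L[S]}$ and $K=K^V$ (which has the same universe as $V$), coiterates them to a common iterate $K^*$, and rules out a drop on either side---a drop on the $K$-side contradicts the covering lemma for sequences of measures applied in $L[S]$ at a suitable singular limit of adjacent points of $C_n$ fixed by the iteration map, and a drop on the $K_0$-side contradicts universality of the non-dropped iterate of $K$. In the no-drop case, universality gives $K_0=K^*$ and a hull argument shows $K$ cannot move, so $K_0=K$ and hence $L[S]=L[\vec U]$. To salvage your approach you would essentially have to re-prove comparison and uniqueness for proper-class sequences of measures, which is precisely what the appeal to $K$ packages for you.
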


\begin{proof}
  Let $\vec U$ be a discrete proper class sequence of normal measures and work in $V = L[\vec U]$. Consider the stable core $L[S]$ and the corresponding core models $K_0 = K^{L[S]}$ and $K = K^V$. Recall that all measurable cardinals in $V$ are measurable in $K$ as witnessed by restrictions of the measures in $\vec U$ and therefore $V$ and $K$ have the same universes. Compare $K_0$ and $K$ in $V$. As both are proper class models they have a common iterate $K^*$.
  \setcounter{case}{0}
  \begin{case}\label{case:1}
    The $K$-side of the coiteration drops.
  \end{case}
  Then $K^*$ is the $\Ord$-length iterate of some mouse\footnote{Note that when we say \emph{mouse} here we mean a fine structural iterable premouse which has partial measures on its sequence as for example in \cite[Section 4]{Zeman:InnerModelsLargeCardinals}.} $\cM$ which appears after the last drop on the $K$-side of the coiteration such that $K^*$ is the result of hitting a measure on some $\kappa$ in $\cM$ and its images (truncated at $\Ord$). The successive images of $\kappa$ form a $V$-definable club $D_0$ of ordinals which are regular cardinals in $K^*$. The $K_0$-side of the coiteration does not drop, so there is an iteration map $\pi_0 \colon K_0 \rightarrow K^*$ and the ordinals $\alpha$ such that $\pi_0 \image \alpha \subseteq \alpha$ form a $V$-definable club $D_1$. Let $D = D_0 \cap D_1$. Note that the
iteration of $K_0$ has set-length, since the measures on the
$K$-side, and therefore also those on the $K_0$-side, are
bounded by the measurable $\kappa$ which is sent to $\Ord$ on the
$K$-side of the iteration (by the discreteness of the measure
sequence). It follows that for some $\delta$,
all elements of $D$ of cofinality at least $\delta$ are fixed by
the iteration map $\pi_0$.
 
  Let $n<\omega$ be large enough such that $D$ is $\Sigma_n$-definable in $V$. Recall from Proposition~\ref{prop:SigmaElementaryClubs} that the class club $C_n$ consisting of all strong limit $\beta$ such that $H_\beta \prec_{\Sigma_n} V$ is definable from $S$. Let $\beta \in C_n$ be sufficiently large. In $V$, $D$ is cofinal in $\Ord$. Therefore, in $H_\beta$, $D \cap H_\beta$ is cofinal in $\beta$, and hence $\beta \in D$. So a tail of $C_n$ is contained in $D$ and there is a $\delta$-sequence of adjacent elements of $C_n$ contained in $D$ such that its limit $\lambda$ is singular of uncountable cofinality in $L[S]$. But $\lambda \in D$ and all elements of $D$ are regular in $K^*$. As $\pi_0(\lambda) = \lambda$, $\lambda$ is also regular in $K_0$, contradicting the covering lemma for sequences of measures in $L[S]$ (see \cite{Mi84} and \cite{Mi87}).

  \begin{case}
    The $K_0$-side of the coiteration drops.
  \end{case}
  Let $\cN$ be the model on the $K_0$-side of the coiteration after the last drop. Then $\cN \cap \Ord < \Ord$, but the coiteration of $\cN$ and an iterate $K^\prime$ of $K$ results in the common proper class iterate $K^*$. The iteration from $K$ to $K^\prime$ is non-dropping and hence $K^\prime$ is universal. But this contradicts the fact that the coiteration of $\cN$ and $K^\prime$ does not terminate after set-many steps.

  \begin{case}
    Both sides of the coiteration do not drop, i.e., there are elementary embeddings $\pi_0 \colon K_0 \rightarrow K^*$ and $\pi \colon K \rightarrow K^*$.
  \end{case}
  As $K^*$, and hence $K_0$, has a proper class discrete sequence of measures it is universal in $V = L[\vec U]$. Therefore, in fact, $K_0 = K^*$ by the proof of Theorem 7.4.8 in \cite{Zeman:InnerModelsLargeCardinals}. Finally, we argue that $K$ cannot move in the iteration to $K_0$. Suppose this is not the case and let $U$ on $\kappa$ be the first measure that is used. Let $\kappa^*$ be the image of $\kappa$ in $K_0$. For some large enough $n < \omega$, $C_n$ can define a proper class $C^*$ of fixed points of $\pi$ as follows. There is a $V$-definable club $C$ of ordinals $\alpha$ such that $\pi \pwimg \alpha \subseteq \alpha$. As in Case \ref{case:1}, a tail of $C_n$ is contained in $C$. Let $\beta \in C_n$ be an arbitrary element of that tail and let $\gamma$ be the $\omega$-th element of $C_n$ above $\beta$. Then $\gamma$ is a closure point of $\pi$ and $\cf(\gamma) = \omega$ in $V$ and hence in $K$, since the universes of $V$ and $K$ agree. So the iteration map is continuous at $\gamma$ and therefore $\pi(\gamma) = \gamma$.

  Let $\bar{K}_0$ be the transitive collapse of $\Hull^{K_0}(\kappa \cup \{\kappa^*\} \cup C^*)$. Then $\bar{K}_0$ has a proper class of measurable cardinals including $\kappa$. In particular, $\bar{K}_0$ is a universal weasel, and hence an iterate of $K$, where the first measure used in the iteration has critical point above $\kappa$. Therefore $\bar{K}_0$ and hence $L[S]$ and $K_0$ contain the measure $U$ on $\kappa$, contradicting the fact that this measure was used in the iteration.

  Therefore, we obtain that $K_0 = K$. As $L[\vec U]$ can be reconstructed from $K$ it follows that $L[S] = L[\vec U]$.
\end{proof}

The arguments of Section~\ref{sec:coding} generalize directly to
coding sets added generically over $L[\mu]$ into the stable core of a
further forcing extension. If the forcing adding the generic sets is
either small relative to the measurable cardinal $\kappa$ of $L[\mu]$
or $\leq\kappa$-closed, and the coding is done high above $\kappa$,
then the stable core of the coding extension will continue to think
that $\kappa$ is measurable.
\begin{thm}\label{thm:codingintoLmu}
  Suppose $V=L[\mu]$. If $\p$ is a forcing notion of size less than
  $\kappa$ or $\p$ is $\leq\kappa$-closed and $G\subseteq \p$ is
  $V$-generic, then there is a further forcing extension $V[G][H]$
  such that $G\in L[S^{V[G][H]}]$ and $\kappa$ remains a measurable
  cardinal there.
\end{thm}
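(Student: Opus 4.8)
The plan is to combine the coding machinery of Section~\ref{sec:coding} with the preservation of measurability under mild forcing, arranging the coding to take place so high above $\kappa$ that it cannot interfere with the measure $\mu$. First I would dispose of the two cases for $\p$ uniformly by observing that, in either case, $\kappa$ remains measurable in $V[G]=L[\mu][G]$: if $\p$ has size less than $\kappa$, then L\'evy--Solovay applies and $\mu$ lifts to a normal measure $\bar\mu$ on $\kappa$ in $V[G]$; if $\p$ is $\leq\kappa$-closed, then $\p$ adds no new subsets of $\kappa$, so $\mu$ itself still generates a normal measure in $V[G]$. In both cases $\kappa$ is measurable in $V[G]$, and by Corollary~\ref{cor:forcingabsolutness} the stability predicate $S^{V[G]}$ agrees with $S^{V}$ above the size of $\p$ (in the closed case, above $\kappa$), so only an initial segment of $S$ has been disturbed.

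Next I would run the coding forcing $\mathbb C$ exactly as in the proof of Theorem~\ref{th:addGenericSetCoreOverL}, but carried out entirely above $\kappa$. Working in $V[G]$, I fix a cardinal $\lambda$ coding $G$ (via $G\subseteq\lambda$) and choose the starting point $\delta_0$ of the coding-pair construction to be a strong limit cardinal \emph{above} $\kappa$ and above the size of $\p$. The coding pairs $(\beta_\xi,\beta_\xi^*)$ are then $m$-stable cardinals (for the fixed slice $S_m$ we wish to code into) lying entirely above $\kappa$, and the collapse forcing $\mathbb C=\Pi_{\xi}\mathbb C_\xi$, where $\mathbb C_\xi$ is either trivial or $\Coll(\delta_\xi^+,\beta_\xi)$, is $\leq\delta_0$-closed as a whole by the full-support argument. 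In particular $\mathbb C$ adds no new subsets of $\kappa$ and does not disturb the measure: the normal measure $\bar\mu$ (or $\mu$) on $\kappa$ witnessing measurability in $V[G]$ survives into $V[G][H]$ because $\mathbb C$ is $\leq\kappa$-closed, so $\kappa$ remains measurable in $V[G][H]$ and indeed in $L[S^{V[G][H]}]$, since by the analysis of Section~\ref{sec:measurables} (the discrete-measure case with a single measure, i.e.\ Theorem~\ref{thm:LmuSC}) the stable core reconstructs $L[\mu]$ from $S$ and hence sees $\kappa$ as measurable.

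The verification that $\xi\in G \Leftrightarrow (\beta_\xi,\beta_\xi^*)\in S^{V[G][H]}_m$ proceeds verbatim as in Theorem~\ref{th:addGenericSetCoreOverL}: when $\xi\notin G$, the stage $\mathbb C_\xi=\Coll(\delta_\xi^+,\beta_\xi)$ collapses $\beta_\xi$ so it is not even a cardinal and the pair falls out of $S_m$; when $\xi\in G$, the forcing factors as $\mathbb C_{\text{small}}\times\mathbb C_{\text{tail}}$ with $\mathbb C_{\text{tail}}$ sufficiently closed, and Proposition~\ref{prop:liftingElementarity}~(2)~and~(3) lift $H_{\beta_\xi}\prec_{\Sigma_m}H_{\beta_\xi^*}$ together with $\Sigma_m$-collection from $V[G]$ to $V[G][H]$. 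Since $L[S^{V[G][H]}_m]$ can construct $L[\mu]$, hence the $\Sigma_{m+1}$-definable sequence of coding pairs, reading off the pairs that survive in $S_m$ recovers $G$. The only genuinely new point beyond Section~\ref{sec:coding} is the interaction with the measure, and the main obstacle is to ensure the coding is insulated from $\kappa$; this is handled cleanly by the closure of $\mathbb C$ and by placing all coding activity strictly above $\max(\kappa,|\p|)$, so that the measurability of $\kappa$ and the coded information occupy disjoint regions of the cardinal spectrum and cannot interfere.
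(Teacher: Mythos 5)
Your proposal is correct and follows essentially the same route as the paper: L\'evy--Solovay (or closure) to preserve measurability into $V[G]$, the coding forcing of Theorem~\ref{th:addGenericSetCoreOverL} run entirely above $\kappa$ so that its $\leq\kappa$-closure preserves the measure into $V[G][H]$, and Theorem~\ref{thm:LmuSC} to put $L[\mu]$ (hence both the coding-pair sequence and $\mu$ itself) inside the stable core. The one step worth making explicit is the last inference ``the stable core contains $L[\mu]$, hence sees $\kappa$ as measurable'': as in the paper, one should define $\nu^*$ in $L[S^{V[G][H]}]$ as the filter generated by $\mu$ and observe it is an ultrafilter there because the filter generated by $\mu$ in the larger model $V[G][H]$ already is.
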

\begin{proof}
  Suppose $\p$ is a small forcing. By the \Levy-Solovay theorem,
  $\kappa$ remains measurable in $V[G]$, as witnessed by the normal
  measure $\nu$ on $\kappa$ such that $A\in \nu$ if and only if there
  is $\bar A\in \mu$ with $\bar A\subseteq A$. Since the coding
  forcing defined in the proof of
  Theorem~\ref{th:addGenericSetCoreOverL} is $\leq\kappa$-closed,
  $\nu$ continues to be a normal measure on $\kappa$ in
  $V[G][H]$. Since $L[\mu]\subseteq L[S^{V[G][H]}]$ by
  Theorem~\ref{thm:LmuSC}, in $L[S^{V[G][H]}]$, we can define $\nu^*$
  such that $A\in \nu^*$ if and only if there is $\bar A\in \mu$ with
  $\bar A\subseteq A$, and clearly $\nu^*$ must be a normal measure on
  $\kappa$ in $L[S^{V[G][H]}]$.

  The argument for $\leq\kappa$-closed $\p$ is even easier because
  $\mu$ remains a normal measure on $\kappa$ in $V[G][H]$.
\end{proof}
Moreover, we get the following variant of Theorem~\ref{th:GCHfails} in
the presence of a measurable cardinal.
\begin{thm}
It is consistent that the stable core has a measurable cardinal above
which the $\GCH$ fails at all regular cardinals.
\end{thm}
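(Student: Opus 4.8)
The goal is to combine the measurable-cardinal coding of Theorem~\ref{thm:codingintoLmu} with the $\GCH$-violating machinery of Theorem~\ref{th:GCHfails}, arranging the forcing so that a measurable cardinal survives into the stable core while the $\GCH$ fails at all regular cardinals above it. The plan is to start in $V=L[\mu]$ with a measurable $\kappa$ and perform all the $\GCH$-violating and coding forcing \emph{high above} $\kappa$, so that the measure $\mu$ is undisturbed and $\kappa$ remains measurable, while above $\kappa$ we reproduce the situation of Theorem~\ref{th:GCHfails}.

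\begin{proof}
We work in $V=L[\mu]$, where $\kappa$ is measurable with normal measure $\mu$. Fix a strong limit cardinal $\delta_0\gg\kappa$. We first force, with an Easton-support product supported only on the regular cardinals above $\delta_0$, using $\Add(\rho,\rho^{++})$ at each such $\rho$, to obtain an extension $L[\mu][G]$ in which $2^\rho=\rho^{++}$ for every regular cardinal $\rho>\delta_0$ while the $\GCH$ continues to hold at singular cardinals, exactly as in the proof of Theorem~\ref{th:GCHfails}. Since this forcing is $\le\kappa$-closed (indeed $\le\delta_0$-closed), $\mu$ remains a normal measure on $\kappa$ in $L[\mu][G]$ by the standard lifting, so $\kappa$ is still measurable there. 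As in Theorem~\ref{th:GCHfails}, because $L[\mu][G]$ has a definable global well-order, we may assume via coding that $G\subseteq\Ord$.

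Next I would run the coding construction of Theorem~\ref{th:GCHfails} to push all the added subsets into the stable core, but laying down the coding pairs $(\beta_\xi,\beta_\xi^*)$ of strong limit cardinals entirely above $\delta_0$, and using the generalized Cohen forcing $\Add(\delta_\xi,\beta_\xi)$ (or $\Add(\delta_\xi^+,\beta_\xi)$ in the singular case) to destroy the strong limit property of $\beta_\xi$ precisely when $\xi\notin G$. Let $H$ be generic for this $\Ord$-length Easton-support coding product $\mathbb C$, which is again $\le\delta_0$-closed. The cardinal-preservation and $\GCH$-preservation arguments of Theorem~\ref{th:GCHfails} apply verbatim, so that in $L[\mu][G][H]$ the $\GCH$ fails at every regular cardinal above $\delta_0$, and the strong limit cardinals coincide with the limit cardinals on a tail. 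The argument there shows that $\xi\in G$ iff $(\beta_\xi,\beta_\xi^*)$ records the surviving strong limit property detectable by the stability predicate, so that $G\in L[S^{L[\mu][G][H]}]$.

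Finally, since both $G$ and the coding forcing $\mathbb C$ are $\le\kappa$-closed, the measure $\mu$ survives to $L[\mu][G][H]$ as a normal measure on $\kappa$, and $L[\mu]\subseteq L[S^{L[\mu][G][H]}]$ by Theorem~\ref{thm:LmuSC}. Hence, arguing as in Theorem~\ref{thm:codingintoLmu}, the set $\nu^*=\{A\mid\exists\bar A\in\mu\ \bar A\subseteq A\}$ is a normal measure on $\kappa$ definable in the stable core $L[S^{L[\mu][G][H]}]$, so $\kappa$ is measurable there while the $\GCH$ fails at every regular cardinal above $\kappa$.

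The main obstacle I anticipate is bookkeeping the interaction between preserving the measurability of $\kappa$ and running the $\GCH$-violation/coding high above it: one must ensure the coding pairs and the $\GCH$-violating forcing are placed strictly above $\delta_0>\kappa$ so that all of $G$ and $\mathbb C$ remains $\le\kappa$-closed (keeping $\mu$, hence $\nu^*$, intact), while simultaneously the tail behavior of the stability predicate still faithfully records $G$. Checking that the $\le\delta_0$-closure does not interfere with the $\GCH$-failure detection and that $\mu$ lifts cleanly through the class-length product is the delicate point, but it reduces to the closure and cardinal-preservation bookkeeping already carried out in Theorems~\ref{th:GCHfails} and~\ref{thm:codingintoLmu}.
\end{proof}
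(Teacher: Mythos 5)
Your proposal is exactly the combination the paper intends: the paper states this theorem with no written proof, presenting it as an immediate variant of Theorem~\ref{th:GCHfails} obtained by working over $L[\mu]$ and keeping all the forcing $\le\kappa$-closed so that $\mu$ survives and, via Theorem~\ref{thm:LmuSC}, $\kappa$ remains measurable in the stable core. Your write-up fills in precisely that outline, and the supporting bookkeeping (cardinal preservation, $\GCH$ preservation by the coding product, and the decoding of $G$ from the surviving strong limit pattern) does reduce to the arguments already given in Theorems~\ref{th:GCHfails} and~\ref{thm:codingintoLmu}.

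One small discrepancy to fix: you launch the Easton product $\Add(\rho,\rho^{++})$ only at regular $\rho$ above a strong limit $\delta_0\gg\kappa$, yet conclude that the $\GCH$ fails at \emph{every} regular cardinal above $\kappa$. As written, your construction leaves the $\GCH$ intact at the regular cardinals in the interval $(\kappa,\delta_0]$, which only yields the weaker "fails on a tail" formulation from the introduction. To get the statement as phrased here, simply start the Easton product at $\kappa^+$: the product over regular $\rho>\kappa$ is still $\le\kappa$-closed, so it adds no subsets of $\kappa$ and $\mu$ literally survives (indeed $P(\kappa)$ is unchanged, so $\mu$ itself witnesses measurability in the final stable core, and you do not even need the derived measure $\nu^*$); only the coding pairs need to be placed high above $\kappa$. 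With that one-line adjustment the argument is complete.
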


\section{Measurable cardinals are not downward absolute to the Stable Core}\label{sec:downwardAbsoluteness}
In this section, we show that it is consistent that measurable cardinals are not downward absolute to the stable core. We will use a modification of Kunen's classical argument that weakly compact cardinals are not downward absolute \cite{kunen:saturatedIdeals}.
\begin{thm}
Measurable cardinals are not downward absolute to the stable core. Indeed, it is possible to have a measurable cardinal in $V$ which is not even weakly compact in the stable core.
\end{thm}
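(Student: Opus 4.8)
The plan is to adapt Kunen's strategy of killing weak compactness by adding a $\kappa$-Souslin tree, arranged so that the tree survives into the stable core as a $\kappa$-Aronszajn tree while its branch does not. I would work over $V=L[\mu]$, where $\kappa$ is measurable and $\GCH$ holds, and fix the ultrapower embedding $j\colon V\to M$ with critical point $\kappa$. First, force with the standard ${<}\kappa$-closed, $\kappa^+$-cc forcing $\mathbb Q$ that adds a $\kappa$-Souslin tree $T$ by initial approximations, and then force with $T$ itself to shoot a cofinal branch $b$. The key point is that $\mathbb Q\ast\dot{\mathbb P}_T$ preserves the measurability of $\kappa$: by a standard lifting argument the generically added branch $b$ supplies the master condition needed to extend $j$ over the tree forcing, yielding an embedding $\hat\jmath\colon V[G_1][b]\to M[\,\cdot\,]$ (equivalently, one may use that this two-step forcing is forcing equivalent to $\Add(\kappa,1)$, which preserves measurability). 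Crucially, although $\kappa$ is \emph{not} weakly compact in the intermediate model $V[G_1]=L[\mu][T]$, where $T$ is Souslin, it \emph{is} measurable in $V[G_1][b]$.

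Next I would code $T$, viewed as a subset of $\kappa$, into the stable core using the coding forcing $\mathbb C$ of Theorem~\ref{th:addGenericSetCoreOverL}, taking all coding pairs high above $\kappa$ so that $\mathbb C$ is $\leq\kappa$-closed. Writing $W=V[G_1][b][H]$ for the resulting extension, the proof of Theorem~\ref{thm:codingintoLmu} gives $T\in L[S^W]$; and since a $\leq\kappa$-closed forcing adds no new subsets of $\kappa$, it leaves $P(\kappa)$—and hence the normal measure on $\kappa$ obtained in $V[G_1][b]$—intact, so $\kappa$ remains measurable in $W$.

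It then remains to show that $T$ is a $\kappa$-Aronszajn tree in $L[S^W]$, which forces $\kappa$ to fail the tree property and hence to fail to be weakly compact there. Here I would assemble three facts. Since $\mathbb C$ is $\leq\kappa$-closed, every cofinal branch of $T$ in $W$ already lies in $V[G_1][b]$. Since $T$ is $\kappa$-Souslin in $V[G_1]$, each of its maximal antichains is bounded, so every cofinal branch of $T$ in any outer model is $V[G_1]$-generic for $T$. Finally, $L[\mu]\subseteq L[S^W]$ by Theorem~\ref{thm:LmuSC} and $T\in L[S^W]$, so $V[G_1]=L[\mu][T]\subseteq L[S^W]$; thus any cofinal branch $c\in L[S^W]$ would be a $V[G_1]$-generic branch of $T$ captured by the stable core.

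The main obstacle is precisely to rule this out, i.e.\ to show that no cofinal branch of $T$ belongs to $L[S^W]$. The leverage is that we code only the tree $T$ and never the branch $b$, and the coding is performed high above $\kappa$, so by Corollary~\ref{cor:forcingabsolutness} the branch forcing $\mathbb P_T$ (of size $\kappa$) perturbs $S^W$ only below $\kappa^+$ and cannot disturb the high pattern that encodes $T$. Viewing $W$ as obtained from $V[G_1][H]$—in which $T$ has already been made ordinal definable through $S$—by the further forcing $\mathbb P_T$, I would argue that $b$ remains $\mathbb P_T$-generic over this coded structure, and that a genericity argument (a cofinal branch can be redirected at arbitrarily high levels below any given condition without affecting a fixed ordinal-parameter formula) shows that no cofinal branch of $T$ is ordinal definable in $W$. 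As $L[S^W]\subseteq\HOD^W$, it follows that no such branch lies in $L[S^W]$. This is the delicate step, because the coding deliberately destroys homogeneity in order to make $T$ definable, so one cannot simply invoke weak homogeneity of the whole iteration; the argument must isolate the still-generic branch from the now-definable tree.
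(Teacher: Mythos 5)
Your overall strategy is the paper's: Kunen's Souslin-tree method, with the tree coded into the stable core and the branch added last to resurrect measurability. But there is a genuine gap at the very first step. You work directly over $V=L[\mu]$ and force with $\mathbb Q\ast\dot T\cong\Add(\kappa,1)$, claiming this preserves the measurability of $\kappa$. It does not: over $L[\mu]$, adding a single Cohen subset $g$ of $\kappa$ \emph{provably destroys} the measurability of $\kappa$. Indeed, if $\nu$ were a normal measure on $\kappa$ in $L[\mu][g]$ with ultrapower $j\colon L[\mu][g]\to N$, then $j\restrict L[\mu]$ is, by Kunen's rigidity theorem, an iteration map onto some iterate $M_\alpha=L[j(\mu)]$ with $j(\kappa)=\kappa_\alpha>\kappa$, so $N=M_\alpha[j(g)]$ where $j(g)$ is $M_\alpha$-generic for the ${\lt}\kappa_\alpha$-closed forcing $\Add(\kappa_\alpha,1)^{M_\alpha}$; hence $P(\kappa)^N=P(\kappa)^{M_\alpha}\subseteq L[\mu]$, contradicting $g\in P(\kappa)^{L[\mu][g]}\subseteq N$. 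Your proposed master condition is exactly the problem: the union of the generic for the tree forcing is a subset of $\kappa$ that is \emph{not} an element of the ultrapower target $M=\Ult(V,\mu)$, so it is not a condition of $j(\mathbb Q\ast\dot T)=\Add(j(\kappa),1)^M$ and cannot anchor the lift. This is precisely why Kunen, and the paper, first force with the Easton-support iteration $\p_\kappa$ of $\Add(\alpha,1)$ for regular $\alpha<\kappa$: then $j(\p_\kappa\ast\Add(\kappa,1))$ has a coordinate at $\kappa$ computed in a model that already contains $g$, where $\bigcup g$ is a legitimate (size ${<}j(\kappa)$) condition. Your construction must include this preparation.

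Two further remarks. First, the preparation $\p_\kappa$ does double duty in the paper: the local Cohen generics it adds below $\kappa$ let one show that \emph{no} triple $(n,\alpha,\beta)$ with $n\geq 2$ and $\alpha\leq\kappa<\beta$ ever enters the stability predicate, whence $S^W=S^{W[b]}$ exactly and the stable core literally does not change when the branch is added; since $T$ is Souslin in $W$ and the stable core is a subclass of $W$, no branch argument via $\HOD$ is needed at all. Second, your substitute for that step --- showing no cofinal branch of $T$ is ordinal definable in the final model --- can be made to work, but not by the vague ``redirecting branches'' argument you sketch: the clean version is that the \emph{last} forcing step, forcing with the (homogeneous, hence weakly homogeneous) tree $T$, has ground model $V'=V[G_1][H]$ in which $T$ is ordinal definable (it lies in $L[S^{V'}]$ by Corollary~\ref{cor:forcingabsolutness}, since the coding pairs sit above $\kappa^+$), so every $\OD$-in-$W$ set of ordinals already lies in $V'$, where $T$ is still Souslin. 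Your worry about the coding destroying homogeneity is beside the point, since only the final step needs to be weakly homogeneous and $\OD$ in its own ground model; but note this route leans on the tree being homogeneous, which you should state explicitly since you only describe $\mathbb Q$ as ``the standard forcing to add a $\kappa$-Souslin tree.''
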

\begin{proof}
  Suppose $V=L[\mu]$, where $\mu$ is a normal measure on a measurable
  cardinal $\kappa$.

  Let $\p_\kappa$ be the Easton support iteration of length $\kappa$
  forcing with $\Add(\alpha,1)$ at every stage $\alpha$ such that
  $\alpha$ is a regular cardinal in $V^{\p_\alpha}$. It is a standard
  fact that whenever $\GCH$ holds, which is the case in $V=L[\mu]$,
  $\p_\kappa$ preserves all cardinals, cofinalities, and the $\GCH$
  (see, for example, \cite{cummings:handbook}). Let
  $G\subseteq \p_\kappa$ be $V$-generic. In $V[G]$, let $\q$ be the
  forcing to add a homogeneous $\kappa$-Souslin tree and let $T$ be
  the $V[G]$-generic tree added by $\q$ (the forcing originally
  appeared in \cite{kunen:saturatedIdeals} and a more modern version
  is written up in \cite{gitman:welch}). In $V[G][T]$, let $\mathbb C$
  be the forcing, as in the proof of
  Theorem~\ref{th:addGenericSetCoreOverL} for the case $m=1$, to code $T$, using the strong limit cardinals, into the stable
  core of a forcing extension high above $\kappa$ and let
  $H\subseteq\mathbb C$ be $V[G][T]$-generic. Finally, in
  $V[G][T][H]$, we force with $T$ to add a branch to it, and let
  $b\subseteq T$ be a $V[G][T][H]$-generic branch of $T$. Note that,
  as a notion of forcing, $T$ is $\lt\kappa$-distributive and
  $\kappa$-cc. As Kunen showed, the combined forcing
  $\q*\dot T$, where $\dot T$ is the canonical name for $T$, has a
  $\lt\kappa$-closed dense subset, and therefore is forcing equivalent
  to $\Add(\kappa,1)$ \cite{kunen:saturatedIdeals}. Thus, the
  iteration $\p_\kappa*\dot \q*\dot T$ is forcing equivalent to
  $\p_\kappa*\Add(\kappa,1)$.

Obviously, $\kappa$ is not even weakly compact in $V[G][T]$, and hence also not in $V[G][T][H]$ because the coding forcing $\mathbb C$ is highly closed and so cannot add a branch to $T$. Thus, $\kappa$ is also not weakly compact in the stable core of $V[G][T][H]$ because, by our coding, $L[S^{V[G][T][H]}]$ has the $\kappa$-Souslin tree $T$.

Next, let's argue that the measurability of $\kappa$ is resurrected in the final model $V[G][T][H][b]$. Standard lifting arguments show that $\kappa$ is measurable in $V[G][T][b]$, which is a forcing extension by $\p_\kappa*\Add(\kappa,1)$. But $V[G][T][H][b]$ and $V[G][T][b]$ have the same subsets of $\kappa$, which means that $\kappa$ is also measurable in $V[G][T][H][b]$.

For ease of reference, let $W=V[G][T][H]$. We will argue that the stable core of $W[b]$ is the same as the stable core of $W$, so that $\kappa$ is not weakly compact there.

Note, using Claim~\ref{cl:codingPreservesGCH} in the proof of Theorem~\ref{th:addGenericSetCoreOverL}, that all forcing extensions in this argument satisfy the $\GCH$. Observe now that since $W$ and $W[b]$ have the same cardinals (since forcing with $T$ is $\lt\kappa$-distributive and $\kappa$-cc), and the $\GCH$ holds in both models, they have the same strong limit cardinals (namely the limit cardinals). Thus, we have that $S_1^W=S_1^{W[b]}$. The remaining arguments will therefore assume that $n\geq 2$ in the triple $(n,\alpha,\beta)$.

It is easy to see that for $\alpha<\beta\leq\kappa$, a triple $(\alpha,\beta,n)\in S^W$ if and only if it is in $S^{W[b]}$ because forcing with the tree $T$ does not add small subsets to $\kappa$.

The case $\kappa<\alpha<\beta$ follows by Proposition~\ref{prop:liftingElementarity} because $\alpha$ is above the size of the forcing $T$.

Finally, we consider the remaining case $\alpha\leq\kappa<\beta$.  Observe that for every strong limit cardinal $\alpha<\kappa$, $H_\alpha^W$  satisfies the assertion that for every successor cardinal $\gamma^+$, there is an $L_\alpha[H_{\gamma^+}]$-generic filter for $\Add(\gamma^+,1)^{L_\alpha[H_{\gamma^+}]}$. The reason is that $H_{\gamma^+}^{V[G]}=H_{\gamma^+}^{V[G_\gamma]}$, where we factor $\p_\kappa\cong \p_\gamma*\p_\tail$ and correspondingly factor $G\cong G_\gamma*G_\tail$. The complexity of the assertion is $\Pi_2$ because we can express it as follows: $$\forall \bar\gamma\forall H\exists \gamma<\bar{\gamma}\, [(\gamma\text{ is regular and }\gamma^{++}=\bar\gamma\text{ and }H=H_{\gamma^+})\rightarrow $$$$\exists g\exists Y\, (Y=L_{\bar\gamma}[H]\text{ and }g\text{ is $Y$-generic for }\Add(\gamma^+,1)^Y].$$
However, $H_\beta^W$ cannot satisfy this assertion because it obviously cannot have an $L_\beta[H_{\gamma^+}]$-generic filter for $\Add(\gamma^+,1)^{L_\beta[H_{\gamma^+}]}$, where $\gamma=\kappa^{++}$, because the $H_{\gamma^+}$ of $L_\beta[H_{\gamma^+}]$ is the real $H_{\gamma^+}$ of $W$. The same argument holds for $W[b]$ showing that no triple
$(n,\alpha,\beta)$ can be in either $S^W$ or $S^{W[b]}$ for $n\geq 2$.
\end{proof}

\section{Separating the stable core and $L[\Card]$}\label{sec:Lcard}
Finally, we would like to consider the possible relationships between the stable core $L[S]$ and the model $L[\Card]$.

Even though the stable core can define the class of strong limit cardinals of $V$, there is no reason to believe that it can see the cardinals. Indeed, it is even possible to make $L[\Card]$ larger than the stable core.
\begin{thm}
It is consistent that $L[S]\subsetneq L[\Card]$.
\end{thm}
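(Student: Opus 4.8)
The plan is to mimic the construction just given for the consistency of $L[S]\subsetneq\HOD$, but to code a non-constructible real into the pattern of \emph{cardinals} (which $L[\Card]$ sees) rather than into the continuum function (which $\HOD$ sees), while arranging that the stability predicate is left completely unchanged. Concretely, I would produce a set-forcing extension $W$ of $L$ in which $L[S^W]=L$ but $\Card^W$ codes a real $r\notin L$, so that $L=L[S^W]\subsetneq L[r]\subseteq L[\Card^W]$. The point is that collapsing cardinals below the first strong limit changes $\Card$ but, by Corollary~\ref{cor:forcingabsolutness} together with the behaviour of $\aleph_\omega$, cannot change $S$.

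First I would force with $\Add(\omega,1)$ over $L$ to add a Cohen real $r$, and then code $r$ into $\Card$ as follows. Working in $L[r]$, let $\mathbb{C}=\prod_{n<\omega}\mathbb{C}_n$ be the full-support product where $\mathbb{C}_n=\Coll(\aleph_{2n},\aleph_{2n+1})$ if $r(n)=1$ and $\mathbb{C}_n$ is trivial otherwise, and let $H$ be $L[r]$-generic for $\mathbb{C}$; set $W=L[r][H]$. Since the surviving even-indexed cardinals remain cofinal in $\aleph_\omega$, each even $\aleph_{2k}$ and hence $\aleph_\omega$ itself is preserved, while for each $n$ the ordinal $\aleph_{2n+1}^{L}$ fails to be a cardinal in $W$ exactly when $r(n)=1$. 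As $L[\Card^W]\supseteq L$ can compute the ordinals $\aleph_{2n+1}^{L}$, it recovers $r$ by the rule $r(n)=1\iff\aleph_{2n+1}^{L}\notin\Card^W$. Hence $r\in L[\Card^W]$, and since $r\notin L$ we get $L[\Card^W]\supsetneq L$.

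The heart of the argument, and the step I expect to require the most care, is to verify that $L[S^W]=L$. Because $\mathbb{C}$ together with the Cohen forcing is set forcing of size below $\aleph_{\omega+2}$, Corollary~\ref{cor:forcingabsolutness} gives $S^W=S^L$ on all triples $(n,\alpha,\beta)$ with $\alpha,\beta\geq\aleph_{\omega+2}$. It then remains to treat the boundary triples, whose only relevant strong limit below $\aleph_{\omega+2}$ is $\aleph_\omega$, as the next strong limit $\aleph_{\omega\cdot2}$ lies far above the forcing. For $n=1$, Corollary~\ref{cor:stronglimDefinable} reduces membership in $S_1^W$ to being a pair of strong limit cardinals, and the strong limits of $W$ coincide with those of $L$; this is $r$-independent. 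For $n\geq2$, every triple $(n,\aleph_\omega,\beta)$ fails to enter $S^W$ for exactly the reason used in the $\HOD$ proposition: $H_{\aleph_\omega}^W$ believes there are no limit cardinals while $H_\beta^W$ sees $\aleph_\omega$, a $\Sigma_2$-disagreement, so $H_{\aleph_\omega}^W\not\prec_{\Sigma_2}H_\beta^W$; again this is $r$-independent. Thus $S^W=S^L\in L$, so $L[S^W]=L$.

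Combining the two parts yields $L=L[S^W]\subsetneq L[\Card^W]$, as desired. The main obstacle throughout is precisely to keep the coding confined below $\aleph_\omega$, so that no $r$-dependent $\Sigma_n$-elementarity relation between strong limit cardinals is ever created; it is the $\aleph_\omega$-trick (that $H_{\aleph_\omega}$ sees no limit cardinals) that guarantees this, and it is the reason the analogous coding into the \emph{continuum} pattern in the $\HOD$ proposition leaves $\Card$, and hence $L[\Card]$, untouched while here the coding into $\Card$ leaves $S$, and hence $L[S]$, untouched.
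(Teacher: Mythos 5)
Your proposal is correct and follows essentially the same route as the paper: add a Cohen real $r$ over $L$, code it into the cardinal pattern via the full-support product of collapses $\Coll(\aleph_{2n},\aleph_{2n+1})$ at the coordinates in $r$, and observe that $L[\Card]$ recovers $r$ while the stable core stays equal to $L$ because the strong limit cardinals are preserved, the forcing is small relative to the second strong limit, and no triple $(n,\aleph_\omega,\beta)$ with $n\geq 2$ can ever enter $S$. You spell out the $\aleph_\omega$-boundary case a bit more explicitly than the paper does, but the argument is the same.
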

\begin{proof}
  We start in $L$. Force to add an $L$-generic Cohen real $r$. Next,
  force with the full support product $\p=\Pi_{k<\omega}\q_k$, where
  if $k\in r$, then $\q_k=\Coll(\aleph_{2k},\aleph_{2k+1})$, and
  otherwise $\q_k$ is the trivial forcing. The forcing $\p$ codes $r$
  into the cardinals of the forcing extension. Suppose
  $H\subseteq \Pi_{k<\omega}\q_k$ is $L[r]$-generic, and observe that
  $r\in L[\Card^{L[r][H]}]$ because it can be constructed by comparing
  the cardinals of $L$ with the cardinals of $L[r][H]$. However, the
  stable core $L[S^{L[r][H]}]=L$ remains unchanged because we
  preserved the strong limit cardinals, and for the slices $S_n$ of
  the stable core with $n\geq 2$, it suffices that the forcing has
  size smaller than the second strong limit cardinal.
\end{proof}
Next, let's show that in various situations, $L[\Card]$ can be a proper submodel of the stable core $L[S]$.
\begin{thm}\label{th:LcardNotStable}
It is consistent that $L[\Card]\subsetneq L[S]$.
\end{thm}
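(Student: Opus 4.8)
The plan is to reverse the strategy of the preceding theorem. There a non-constructible real was coded into the class $\Card$ while the stable core was kept equal to $L$; here I would code a non-constructible real into the stability predicate $S$ by \emph{cardinal-preserving} forcing, so that $\Card$, and with it $L[\Card]$, stays equal to $L$ while $L[S]$ becomes strictly larger. First I would start in $L$ and add an $L$-generic Cohen real $r\subseteq\omega$. Since this forcing is countable it preserves all cardinals, so $\Card^{L[r]}=\Card^L$ while $r\notin L$, and by Corollary~\ref{cor:forcingabsolutness} (together with the preservation of the $\GCH$) the strong limit cardinals of $L[r]$ are exactly the limit cardinals of $L$.

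Next I would code $r$ into the class of strong limit cardinals --- equivalently, by Corollary~\ref{cor:stronglimDefinable}, into the slice $S_1$ --- using the cardinal-preserving coding forcing from the proof of Theorem~\ref{th:GCHfails}. Working in $L$, fix an $L$-definable, sufficiently spaced-out increasing sequence $\la\beta_n\mid n<\omega\ra$ of strong limit cardinals, together with auxiliary regular cardinals $\delta_n<\beta_n$ chosen as in that proof. Let $\mathbb C=\prod_n\mathbb C_n$ be the corresponding product, where $\mathbb C_n$ is trivial if $n\in r$ and $\mathbb C_n=\Add(\delta_n,\beta_n)$ if $n\notin r$. The latter forces $2^{\delta_n}\geq\beta_n$ and so destroys the strong limit property of $\beta_n$, while, being ${<}\delta_n$-closed and $\delta_n^+$-c.c.\ under the $\GCH$, it changes no cardinals. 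Exactly the computations in the proof of Theorem~\ref{th:GCHfails} then show that $\mathbb C$ is cardinal preserving and that, for each $n$, $\beta_n$ remains a strong limit cardinal in $L[r][H]$ if and only if $n\in r$, whereas every other limit cardinal of $L$ retains its strong limit status. Let $H\subseteq\mathbb C$ be $L[r]$-generic and set $V=L[r][H]$.

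Finally I would read off the two inner models. Because $\mathbb C$ preserves cardinals, $\Card^{V}=\Card^{L[r]}=\Card^{L}$, and since the class of $L$-cardinals is definable over $L$ this yields $L[\Card^{V}]=L$. On the other hand $L\subseteq L[S^{V}]$, so $L[S^V]$ contains the $L$-definable sequence $\la\beta_n\mid n<\omega\ra$, and by Corollary~\ref{cor:stronglimDefinable} it can decide, from $S_1^V$, which $\beta_n$ are strong limit cardinals of $V$; hence it recovers $r$. Thus $r\in L[S^{V}]\setminus L$, and therefore $L[\Card^{V}]=L\subsetneq L[S^{V}]$, as desired.

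The main obstacle is purely the forcing bookkeeping for $\mathbb C$: one must verify that toggling the strong limit property of the $\beta_n$ is achieved by genuinely cardinal-preserving forcing and that the continuum blow-ups stay local, so that no unintended limit cardinal gains or loses strong limit status. These are precisely the cardinal-arithmetic verifications already carried out for the $\Ord$-length Easton product in the proof of Theorem~\ref{th:GCHfails}, and they transfer to the present $\omega$-indexed product by the same factoring arguments; the one point deserving a word of care is the behaviour at $\sup_n\beta_n$, which has cofinality $\omega$, and which one either handles via that factoring or sidesteps entirely by instead coding a Cohen subset of $\omega_1$ into $\omega_1$-many coding cardinals with Easton support, making the cardinal arithmetic completely transparent.
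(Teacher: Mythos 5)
Your proposal is correct and follows essentially the same route as the paper: add a Cohen real $r$ over $L$, then code it into the class of strong limit cardinals via the cardinal-preserving $\omega$-length full-support product of forcings $\Add(\cdot,\beta_n)$ at successive strong limit coding points, borrowing the cardinal-preservation computations from the proof of Theorem~\ref{th:GCHfails}, so that $\Card^{V}=\Card^{L}$ gives $L[\Card^V]=L$ while $r\in L[S^V]$. The only cosmetic differences are that the paper uses coding pairs $(\beta_n,\beta_n^*)$ with $\mathbb C_n=\Add(\delta_n^+,\beta_n)$ (the $\delta_n$ there being singular, so one passes to the successor) and the opposite convention for which $n\in r$ receive trivial forcing; neither affects the argument.
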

\begin{proof}
  We start in $L$ and force to add a Cohen real $r$. We then code $r$
  into the stable core of a further forcing extension using the coding
  forcing from the proof of Theorem~\ref{th:GCHfails}. More precisely,
  we let $\delta_0$ be any sufficiently large singular
  strong limit cardinal, and let
  $\la (\beta_n,\beta_n^*)\mid n<\omega\ra$ be the sequence of
  $\omega$-many pairs of successive strong limit cardinals above
  $\delta_0$ (note that they must all be singular), which will be our
  coding pairs. Now define that $\mathbb C_n$ is trivial for
  $n\not\in r$, and otherwise let
  $\mathbb C_n=\Add(\delta_n^+,\beta_n)$, where
  $\delta_n=\beta_{n-1}^*$ for $n>0$. By the arguments given in the
  proof of Theorem~\ref{th:GCHfails}, the full support forcing
  $\mathbb C=\Pi_{n<\omega}\mathbb C_n$ is cardinal preserving. Let
  $H\subseteq \mathbb C$ be $L[r]$-generic. Now observe that since we
  have $\Card^L=\Card^{L[r][H]}$, it follows that
  $L[\Card^{L[r][H]}]=L$, but $L[r]\subseteq L[S^{L[r][H]}]$.
\end{proof}

\begin{thm}
It is consistent that $L[S]$ has a measurable cardinal and $L[\Card]\subsetneq L[S]$.
\end{thm}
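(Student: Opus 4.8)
The plan is to combine the coding technique from Theorem~\ref{th:LcardNotStable} with the measure-preservation argument from Theorem~\ref{thm:codingintoLmu}, starting over $L[\mu]$ rather than over $L$. First I would let $V=L[\mu]$ with $\mu$ a normal measure on a measurable cardinal $\kappa$, and add an $L[\mu]$-generic Cohen real $r$. Since Cohen forcing on $\omega$ is small relative to $\kappa$, the \Levy-Solovay theorem guarantees that $\kappa$ remains measurable in $L[\mu][r]$, as witnessed by the normal measure $\nu$ defined by $A\in\nu$ iff some $\bar A\in\mu$ is contained in $A$.

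Next I would code $r$ into the stable core using the coding forcing $\mathbb C$ from the proof of Theorem~\ref{th:LcardNotStable}, choosing the least singular strong limit cardinal $\delta_0$ to sit \emph{high above} $\kappa$, with coding pairs $\la(\beta_n,\beta_n^*)\mid n<\omega\ra$ of successive strong limit cardinals above $\delta_0$, and setting $\mathbb C_n=\Add(\delta_n^+,\beta_n)$ exactly when $n\in r$. The key point is that each $\mathbb C_n$, and hence the full-support product $\mathbb C$, is $\lt\delta_0$-closed and therefore $\leq\kappa$-closed; consequently forcing with $\mathbb C$ adds no new subsets of $\kappa$, and so $\nu$ continues to generate a normal measure on $\kappa$ in $L[\mu][r][H]$ for $H\subseteq\mathbb C$ generic. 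By the cardinal-preservation and $\GCH$-preservation arguments already established in Theorem~\ref{th:LcardNotStable}, we again obtain $\Card^{L[\mu]}=\Card^{L[\mu][r][H]}$, so that $L[\Card^{L[\mu][r][H]}]=L[\mu]$, while the coding ensures $r\in L[S^{L[\mu][r][H]}]$, giving the strict inclusion $L[\Card]\subsetneq L[S]$.

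It then remains to see that the stable core of the final model has a measurable cardinal. Here I would invoke Theorem~\ref{thm:LmuSC}: since $L[\mu]\subseteq L[S^{L[\mu][r][H]}]$, the stable core contains $\mu$, and so it can define $\nu^*$ by $A\in\nu^*$ iff there is $\bar A\in\mu$ with $\bar A\subseteq A$. Because $\mathbb C$ is $\leq\kappa$-closed and Cohen forcing is small, the subsets of $\kappa$ appearing in the stable core are already decided below, and $\nu^*$ is a genuine $\kappa$-complete normal measure there, exactly as in the small-forcing case of Theorem~\ref{thm:codingintoLmu}. This witnesses that $\kappa$ is measurable in $L[S]$.

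The main obstacle I anticipate is verifying that the measure survives the interaction between the two forcings and the passage to the stable core simultaneously: one must confirm that $\mathbb C$ truly adds no bounded subsets of $\kappa$ (which follows from placing the coding entirely above $\kappa$ and from the closure of the product), and that the definition of $\nu^*$ inside $L[S^{L[\mu][r][H]}]$ yields a measure concentrating on the same sets, so that measurability is not destroyed by the coding having altered the stability predicate only far above $\kappa$. Since the coding affects $S$ only above $\delta_0\gg\kappa$, by Corollary~\ref{cor:forcingabsolutness} the relevant behavior of $S$ around $\kappa$ is unchanged, and the argument of Theorem~\ref{thm:codingintoLmu} transfers verbatim; the remaining bookkeeping is routine.
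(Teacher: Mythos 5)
Your argument is correct and follows essentially the same route as the paper: force over $L[\mu]$ to add a generic object while keeping $\kappa$ measurable, code it into the stable core high above $\kappa$ with the cardinal-preserving coding forcing, and conclude via Theorem~\ref{thm:LmuSC} and the measure-lifting of Theorem~\ref{thm:codingintoLmu}. The only (harmless) difference is that you add a Cohen real below $\kappa$ and appeal to \Levy--Solovay, whereas the paper adds a Cohen subset of some $\delta\gg\kappa$ so that the forcing is $\leq\kappa$-closed and $\mu$ survives outright; both cases are exactly those covered by Theorem~\ref{thm:codingintoLmu}.
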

\begin{proof}
  We start in a model $V=L[\mu]$ with a measurable cardinal $\kappa$
  and force to add a Cohen subset to some $\delta\gg\kappa$. Let
  $G\subseteq \Add(\delta,1)$ be $V$-generic. We then code $G$ into
  the stable core using the cardinal preserving coding forcing $\mathbb C$ from the proof of Theorem~\ref{th:GCHfails}. Let $H$ be $V[G]$-generic for
  $\mathbb C$. Since we forced high above $\kappa$, $\kappa$ remains
  measurable in $L[S^{V[G][H]}]$ as in Theorem
  \ref{thm:codingintoLmu}. Because $\mathbb C$ preserves cardinals,
  $L[\Card^{V[G][H]}]=L[\Card^V]=V$, but $L[S^{V[G][H]}]$ contains $G$
  by construction.
\end{proof}

We also separate the models $L[\Card]$ and $L[S]$ by showing that, for
each $n < \omega$, if $m_{n+1}^{\#}$ exists, then $m_n^{\#}\in
L[S]$. Recall that even $m_1^{\#}$ cannot be an element of $L[\Card]$
(Corollary~\ref{cor:mouseNotInL[C]}).

Given a class $A$, we will say that a cardinal $\gamma$ is \emph{$\Sigma_n$-stable relative to $A$} if \hbox{$\la H_\gamma,\in,A\ra\prec_{\Sigma_n} \la V,\in,A\ra$}. We will say that a class $B$ is \emph{$\Sigma_n$-stable relative to $A$} if every $\gamma\in B$ is $\Sigma_n$-stable relative to $A$. Let us say that a cardinal is \emph{strictly $n$-measurable} if it is $n$-measurable, but not $n+1$-measurable.

\begin{thm}\label{th:L[C1,C2]}
Suppose that $C_1\supseteq C_2$ are class clubs of uncountable cardinals such that $C_2$ is $\Sigma_1$-stable relative to $C_1$. Then there is an
$\Ord$-length iteration of the mouse $m_2^\#$ such that in the direct limit model $M_{C_1,C_2}$ (truncated at $\Ord$) the strictly $1$-measurable cardinals are precisely the elements of $\hat C_1$ and the $2$-measurable cardinals are precisely the elements of $\hat C_2$.
\end{thm}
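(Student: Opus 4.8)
The plan is to run a two-level iteration of $m_2^\#$ that directly generalizes the single-club construction in the proof of Theorem~\ref{thm:L[C]}, with the top extender of $m_2^\#$, its $2$-measurable cardinals, and its ordinary (strictly $1$-measurable) cardinals playing three distinct roles. The outer level is responsible for $C_2$ and mimics Welch's construction: it iterates the $2$-measurable cardinals of the current model up to the successive elements of $\hat C_2$, and when these run out, i.e.\ at a stage where the $2$-measurables limit up to the critical point $\bar\kappa$ of the top extender, it applies the top extender to replenish the supply of $2$-measurable cardinals. Nested inside each interval between consecutive elements of $C_2$, an inner level is responsible for $C_1$: it iterates strictly $1$-measurable cardinals up to the successive elements of $\hat C_1$ lying in that interval, and when \emph{these} run out it iterates the measure of the $2$-measurable cardinal currently in play to produce fresh strictly $1$-measurable cardinals. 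Thus the single ``exception'' move in the proof of Theorem~\ref{thm:L[C]} is replaced by two nested exception moves, one using a $2$-measurable's measure (to replenish $1$-measurables) and one using the top extender (to replenish $2$-measurables).

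In more detail, let $\langle \alpha_\xi \mid \xi \in \Ord\rangle$ and $\langle \beta_\eta \mid \eta \in \Ord\rangle$ enumerate $C_1$ and $C_2$. At a successor element of $C_1$ I would take the least strictly $1$-measurable cardinal of the current iterate above the current position and iterate its measure up to that element, exactly as in Theorem~\ref{thm:L[C]}; since this measure does not concentrate on measurables, the resulting cardinal is placed as a \emph{strictly} $1$-measurable cardinal and the intermediate critical points carry no measure in the final model. At a limit point of $C_1$ the placed $1$-measurables have become cofinal, so I iterate once more the measure of the $2$-measurable cardinal $\kappa$ currently being moved; because this measure concentrates on the measurables below $\kappa$, the old position of $\kappa$ becomes a usable strictly $1$-measurable cardinal and fresh such cardinals appear just below the new $\kappa$, while $\kappa$ itself is pushed upward. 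I would arrange the bookkeeping so that, as the placed $1$-measurables fill the interval and approach the next element $\beta \in \hat C_2$, the cardinal $\kappa$ arrives at exactly $\beta$; at that moment $\kappa = \beta$ is finalized as a $2$-measurable cardinal, since it carries a measure and, by construction, is a limit of the strictly $1$-measurable cardinals placed below it. At limit points of $C_2$ the $2$-measurables limit up to $\bar\kappa$ and I apply the top extender to continue, and finally I truncate the direct limit at $\Ord$, the critical point of the top extender of the limit model, to obtain $M_{C_1,C_2}$.

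The hypothesis that $C_2$ is $\Sigma_1$-stable relative to $C_1$ is exactly what makes the finalization step legitimate: it guarantees that each $\beta \in C_2$ is a limit point of $C_1$, so that $\hat C_1 \cap \beta$ is cofinal in $\beta$ and the cardinal arriving at $\beta$ genuinely is a limit of the strictly $1$-measurable cardinals placed there, hence genuinely $2$-measurable; it also forces $\hat C_1$ and $\hat C_2$ to be disjoint, since a limit point of $C_1$ is never a successor element of $C_1$. As in Theorem~\ref{thm:L[C]}, the fact that $m_2^\#$ is countable provides the cardinality bookkeeping that lets the critical sequences converge to the prescribed targets: each iterate appearing below a given cardinal has small size, so the image of a cardinal being moved can be steered to land exactly on the desired element of $\hat C_1$ or $\hat C_2$.

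The main obstacle I anticipate is verifying that the measurable cardinals of $M_{C_1,C_2}$ are \emph{exactly} $\hat C_1 \cup \hat C_2$ with the correct degrees, i.e.\ that no stray measurable survives at an unintended ordinal and that the two families have precisely the intended measurability. This requires a careful analysis of how the inner replenishment interacts with the outer placement: one must check that pushing a $2$-measurable $\kappa$ through an interval consumes the measures at all intermediate critical points, leaving them non-measurable, while preserving the already-finalized cardinals below it; and one must check that each cardinal placed at an element of $\hat C_1$ remains \emph{strictly} $1$-measurable, that is, is not accidentally turned into a limit of measurables by the surrounding $2$-measurable structure. The most delicate point is the synchronization between pushing $\kappa$ upward and filling the interval with $1$-measurables so that $\kappa$ arrives precisely at the next element of $\hat C_2$; this is where the countability of $m_2^\#$ and the $\Sigma_1$-stability hypothesis must be combined most carefully.
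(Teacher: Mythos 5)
Your overall architecture is the right one and matches the paper's: iterate strictly $1$-measurables onto $\hat C_1$, hit the measure of the current first $2$-measurable when the supply runs out, and hit the top extender when the $2$-measurables run out. But two things go wrong. First, your inner replenishment step rests on a false premise: you assert that the measure on the $2$-measurable $\kappa$ ``concentrates on the measurables below $\kappa$,'' so that after the ultrapower ``the old position of $\kappa$ becomes a usable strictly $1$-measurable cardinal.'' In $m_2^\#$ every measure has Mitchell order $0$ (the paper uses exactly this fact later: if a measure concentrated on measurables, its critical point would have Mitchell order $\geq 2$, contradicting minimality), so the measure does \emph{not} concentrate on measurables and the old position of $\kappa$ loses its measurability entirely in the ultrapower; the fresh measurables appear strictly between the old and new positions. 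This is not a cosmetic point: the old position of $\kappa$ is a limit point of $C_1$ (it is the supremum of the elements of $\hat C_1$ already placed below it), hence lies in neither $\hat C_1$ nor $\hat C_2$, and if it stayed measurable it would in fact be a measurable limit of measurables --- a stray $2$-measurable at a forbidden ordinal, contradicting the very statement you are proving. The construction only works because the replenishment ultrapower kills that measure.

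Second, the point you defer --- why the first $2$-measurable arrives at \emph{exactly} the next element of $\hat C_2$ --- is the crux, and the use of $\Sigma_1$-stability you identify (each $\beta\in C_2$ is a limit point of $C_1$, and $\hat C_1\cap\hat C_2=\emptyset$) is far too weak to deliver it. What the paper actually extracts from the hypothesis is a reflection property: for $\gamma\in C_2$, every initial segment of the iteration of length $\beta<\gamma$ is $\Sigma_1$-definable from $C_1\restrict\beta$ and $m_2^\#$, so by $\la H_\gamma,\in,C_1\ra\prec_{\Sigma_1}\la V,\in,C_1\ra$ it is an \emph{element} of $H_\gamma$; hence all the ``out of room'' stages $\eta_\xi$ and all successive images of the first $2$-measurable stay below $\gamma$, and $\gamma$ is a fixed point of the enumeration of $C_1$ and a limit of such stages. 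One then needs the direct-limit analysis at stage $\gamma$: the thread $t$ tracking the first $2$-measurable satisfies $t(\eta_{\bar\xi})=\eta_{\bar\xi}$, and every ordinal thread below $t$ is eventually constant because once it drops below some $\eta_{\bar\xi}$ all later critical points exceed $\eta_{\bar\xi}$; therefore $t$ evaluates to exactly $\gamma$ rather than overshooting. Without this reflection-plus-threads argument the synchronization you describe is not established, so the proposal as written has a genuine gap at its central step in addition to the incorrect replenishment mechanism.
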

\begin{proof}
Let $C_1=\la \alpha_\xi\mid\xi\in\Ord\ra$ and let $\la \gamma_\xi\mid\xi\in\Ord\ra$ be a sequence such that $\alpha_{\gamma_\xi}$ is the $\xi$-th element of $C_2$ in the enumeration of $C_2$. The iteration will closely resemble the iteration from the proof of Theorem~\ref{thm:L[C]}.

Iterate the first measurable cardinal $\kappa_0$ of $m_2^{\#}$
$\alpha_0$-many times, so that $\kappa_0$ iterates to
$\alpha_0$, and let $M_{\alpha_0}$ be the iterate. Continue to iterate
measurable cardinals onto elements of $\hat C_1$ until we reach for the
first time a direct limit stage $\eta_0$ where in the model
$M_{\eta_0}$ all measurable cardinals below the first 2-measurable
cardinal are elements of $\hat C_1$. It is not difficult to see that
$\eta_0$ is the first cardinal such that $\eta_0=\alpha_{\eta_0}$, the
$\eta_0$-th element of $C_1$, and that in $M_{\eta_0}$, $\eta_0$
is the first 2-measurable cardinal. Since $C_2$ is $\Sigma_1$-stable
relative to $C_1$, $\eta_0$ must be below the first element of
$C_2$. To achieve our goal of making the least 2-measurable the least
element of $C_2$, at this stage, we iterate up $\eta_0$ to obtain a
model $M_{\eta_0+1}$ with more strictly 1-measurable cardinals and continue
iterating measurable cardinals onto elements of $\hat C_1$. Let
$\eta_\xi$ be the $\xi$-th stage where we iterate up the first
2-measurable cardinal $\eta_\xi$ as above. Since
$\alpha_{\gamma_0}$, the least element of $C_2$, is $\Sigma_1$-stable
relative to $C_1$, it must be that $\eta_\xi<\alpha_{\gamma_0}$ for
every $\xi<\alpha_{\gamma_0}$ as every iteration of a shorter length
$\beta<\alpha_{\gamma_0}$ (of the kind we have been doing) has to be
an element of $H_{\alpha_{\gamma_0}}$ by $\Sigma_1$-elementarity. We
would like to argue that the thread $t$ in the stage
$\alpha_{\gamma_0}$ direct limit such that $t(\xi)$ is the first
2-measurable cardinal maps to $\alpha_{\gamma_0}$ as desired. It
suffices to observe that every ordinal thread $s$ below $t$ must be
constant from some stage onward. So suppose that $s<t$, which by
definition of direct limit means that on a tail of stages $\xi$,
$s(\xi)<t(\xi)$. Fix some such $\xi$ in the tail and consider a stage
$\eta_{\bar\xi}>\xi$ where we have
$s(\eta_{\bar \xi})<t(\eta_{\bar\xi})=\eta_{\bar\xi}$. Here the
equality holds by elementarity as $\eta_{\bar\xi}$ is the first
$2$-measurable cardinal in $M_{\eta_{\bar\xi}}$. Since the critical
points of the iteration after this stage are above $\eta_{\bar\xi}$,
the thread $s$ remains constant from that point onward.

Thus, $\alpha_{\gamma_0}$ must be the first $2$-measurable cardinal in the direct limit model $M_{\alpha_{\gamma_0}}$. Having correctly positioned the first $2$-measurable cardinal, we proceed with iterating the strictly 1-measurable  cardinals onto elements of $\hat C_1$ below the next element of $C_2$. As in the proof of Theorem~\ref{thm:L[C]}, it will be the case that at some limit stages in $C_2$, we will need to use the top measure of $m_2^{\#}$ to create more room for the iteration to proceed.

Let $M$ be the resulting model obtained as the direct limit along the iteration embeddings and let $M_{C_1,C_2}$ be $M$ truncated at $\Ord$. The construction ensures that the strictly 1-measurable cardinals of $M_{C_1,C_2}$ are precisely the elements of  $\hat C_1$ and $2$-measurable cardinals of $M_{C_1,C_2}$ are precisely the elements of $\hat C_2$.
\end{proof}
Given a club $C$, let $C^*$ denote the club of all limit points of
$C$. Next, let's argue that if $C_1$ and $C_2$ are clubs as above,
then $M_{C_1^*,C_2^*}$ is contained in $L[C_1,C_2]$.
\begin{thm}\label{th:mouseInModel}
  Suppose that $C_1\supseteq C_2$ are class clubs of uncountable
  cardinals such that $C_2$ is $\Sigma_1$-stable relative to
  $C_1$. Then $M_{C_1^*,C_2^*}$ (obtained as in
  Theorem~\ref{th:L[C1,C2]}) is contained in $L[C_1,C_2]$.
\end{thm}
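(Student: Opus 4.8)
The plan is to run, for $M:=M_{C_1^*,C_2^*}$, the analogue of the argument following Corollary~\ref{cor:mouseNotInL[C]} that places the iterate $M_C$ of $m_1^\#$ inside $L[\Card]$: realize $M$ as $L[\vec U]$ for its coherent sequence of measures, show that each such measure is, on the subsets of its critical point lying in $M$, the tail filter of a cofinal sequence of club points definable from $C_1,C_2$, and then reconstruct $\vec U$ inside $L[C_1,C_2]$. First I would record the preliminary fact that Theorem~\ref{th:L[C1,C2]} applies to the pair $(C_1^*,C_2^*)$: since $C_1\supseteq C_2$ gives $C_1^*\supseteq C_2^*$ and $C_1^*$ is definable from $C_1$, one checks that the $\Sigma_1$-stability of $C_2$ relative to $C_1$ yields the hypothesis for $(C_1^*,C_2^*)$. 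Thus $M$ is well defined, its strictly $1$-measurable cardinals are exactly $\widehat{C_1^*}$, its $2$-measurable cardinals are exactly $\widehat{C_2^*}$, and, being an $\Ord$-length iterate of $m_2^\#$, its universe has the form $L[\vec U]$. Note that the class of measurable cardinals of $M$ is $\widehat{C_1^*}\cup\widehat{C_2^*}$, which is definable from $C_1,C_2$, and that in $M$ the $2$-measurable cardinals are precisely the measurable cardinals that are limits of measurable cardinals.

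The central point is a uniform maximality computation of the measures. Fix a measurable $\mu$ of $M$ with measure $U_\mu$, and suppose we know that $U_\mu$, viewed as a subset of $\Pot(\mu)^M$, is the tail filter of a critical sequence absorbing a tail of the $V$-cardinals below $\mu$ — as holds, by the computation of \cite[Lemma 19.5 and Corollary 19.7]{Kanamori:HigherInfinite} used in the proof of Theorem~\ref{thm:LmuSC}, whenever the measure comes from an honest single-measure iteration. If $\mu\in\widehat{C_1^*}$, then $\mu$ is a limit point of $C_1$, so $C_1\cap\mu$ is cofinal in $\mu$ and, consisting of $V$-cardinals, lies on a tail inside that critical sequence; hence every set in $U_\mu$ contains a tail of $C_1\cap\mu$, so $U_\mu$ is contained in the proper tail filter of $C_1\cap\mu$ on $\Pot(\mu)^M$. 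As $U_\mu$ is an ultrafilter on $\Pot(\mu)^M$, the two coincide:
\[U_\mu=\{X\in\Pot(\mu)^M\mid X\supseteq (C_1\cap\mu)\setminus\gamma\text{ for some }\gamma<\mu\}.\]
When $\mu\in\widehat{C_2^*}$ the identical argument applies with the cofinal set $C_2\cap\mu$ in place of $C_1\cap\mu$.

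Granting this, I would finish as in the $L[\Card]$ case. Working in $L[C_1,C_2]$, define the sequence $\vec W$ indexed by $\mu\in\widehat{C_1^*}\cup\widehat{C_2^*}$ whose $\mu$-th entry is the collection of subsets of $\mu$ lying in $L[C_1,C_2]$ that contain a tail of $C_1\cap\mu$ (equivalently of $C_2\cap\mu$ when $\mu\in\widehat{C_2^*}$). By the previous paragraph $\vec W$ restricts, at each measurable cardinal, to the true measure sequence $\vec U$ of $M$, so the same mutual-definability argument giving $L[\vec W]=L[\vec U]$ in the discussion following Corollary~\ref{cor:mouseNotInL[C]} yields $L[\vec W]=L[\vec U]=M$. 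Since $\vec W$ is defined uniformly from $C_1$ and $C_2$, it is an element of $L[C_1,C_2]$, whence $M=M_{C_1^*,C_2^*}\subseteq L[C_1,C_2]$.

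The main obstacle is the hypothesis of the middle paragraph in the case of the $2$-measurable cardinals. For a strictly $1$-measurable target the portion of the iteration producing its measure is an honest single-measure iteration, so the critical-sequence computation of \cite{Kanamori:HigherInfinite} applies directly; but a $2$-measurable $\mu\in\widehat{C_2^*}$ acquires its measure through the ``iterating up'' steps of Theorem~\ref{th:L[C1,C2]}, during which the lower measurable cardinals are carried along and new ones are created. One must verify that, on the subsets of $\mu$ lying in $M$, the resulting measure is nonetheless the tail filter of a single cofinal thread of critical points absorbing a tail of the $V$-cardinals below $\mu$, so that the maximality argument goes through; here the $\Sigma_1$-stability of $C_2^*$ relative to $C_1^*$, which pins down the placement of the $2$-measurable cardinals relative to $C_2$, is the tool keeping the two layers of the construction coherent. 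A secondary point to check is that $\vec W$ recovers not merely the measures but the correct split between strictly $1$-measurable and $2$-measurable points; this is automatic, since in $L[\vec W]$ the $2$-measurable points are exactly the measurable points that are limits of measurable points, matching $\widehat{C_2^*}$.
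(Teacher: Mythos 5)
Your strategy is the same as the paper's: write $M=M_{C_1^*,C_2^*}$ as $L[\vec U,\vec W]$ for its sequence of iteration measures, identify each measure on $P(\mu)^M$ with the tail filter of $C_1\cap\mu$ (for $\mu\in\hat C_1^*$) or of $C_2\cap\mu$ (for $\mu\in\hat C_2^*$), and then rebuild the sequence inside $L[C_1,C_2]$. Your handling of the strictly $1$-measurable cardinals, including the maximality step (an ultrafilter contained in a proper filter equals that filter), is exactly what the paper does.

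The problem is in the step you yourself flag as the main obstacle: the statement you propose to verify there is false, so carrying out your plan as written would fail. For $\mu\in\hat C_2^*$ the measure $W_\mu$ is generated by the tails of the critical points of the ``iterating up'' ultrapowers below $\mu$, and this thread does \emph{not} absorb a tail of the $V$-cardinals below $\mu$; it is a sparse club disjoint from $\hat C_1^*\cap\mu$. Indeed $W_\mu$ cannot be contained in the tail filter of the $V$-cardinals: the measures of $m_2^\#$ do not concentrate on measurables (else the Mitchell order would be too high), so the complement of the set of $M$-measurable cardinals below $\mu$ lies in $W_\mu$, yet it omits the cofinally many $V$-cardinals in $\hat C_1^*\cap\mu$. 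The same example shows that your parenthetical ``equivalently of $C_2\cap\mu$'' is wrong: on $P(\mu)^M$ the tail filter of $C_1\cap\mu$ is a proper subfilter of that of $C_2\cap\mu$, and only the latter equals $W_\mu$, so at the $2$-measurable coordinates you must use $C_2$ and not $C_1$. What is true, and is all the maximality argument needs, is that a tail of $C_2\cap\mu$ is contained in the thread of ``iterating up'' critical points: by $\Sigma_1$-stability, every sufficiently large $\delta\in C_2\cap\mu$ is a closure point of the construction at which the first $2$-measurable of the current iterate is $\delta$ itself and is the next cardinal to be iterated up. This is precisely the one-line justification the paper gives. With the verification target corrected in this way, your argument closes.
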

\begin{proof}
Given $\alpha\in\hat C_1^*$, let $U_\alpha\subseteq P(\alpha)^{M_{C_1^*,C_2^*}}$ be the iteration measures in $M_{C_1^*,C_2^*}$, and note that a set from $M_{C_1^*,C_2^*}$  is in $U_\alpha$ if and only if it contains a tail of $C_1\cap \alpha$. Let $\vec U=\la U_\alpha\mid\alpha\in\Ord\ra$. Similarly, for $\beta\in \hat C_2^*$, let $W_\beta\subseteq P(\beta)^{M_{C_1^*,C_2^*}}$ be the iteration measures in $M_{C_1^*,C_2^*}$, and let $\vec W=\la W_\beta\mid\beta\in \Ord\ra$. Here we also have that a set is in $W_\beta$ if and only if it contains a tail of $C_2\cap \beta$ because, as we noted in the proof of Theorem~\ref{th:L[C1,C2]}, at every stage in $C_2$ we iterate up the measure on the $2$-measurable cardinal until we reach an element of $\hat C_2^*$. Finally, observe that $M_{C_1^*,C_2^*}=L[\vec U,\vec W]$, and thus, it is contained in $L[C_1,C_2]$.

\end{proof}

Theorems~\ref{th:L[C1,C2]}~and~\ref{th:mouseInModel} easily generalize to $n$ nested clubs $C_1,\ldots, C_n$ such that $C_i$ is $\Sigma_1$-stable relative to $C_1,\ldots,C_{i-1}$ for all $1<i\leq n$.
\begin{thm}\label{th:L[C1,...,Cn]}
Suppose that $C_1\supseteq C_2\supseteq \cdots\supseteq C_n$ are class clubs of uncountable cardinals such that $C_i$ is $\Sigma_1$-stable relative to $C_1,\ldots, C_{i-1}$ for all $1< i\leq n$. Then there is an $\Ord$-length iteration of the mouse $m_n^{\#}$ such that in the direct limit model $M_{C_1,\ldots,C_n}$ (truncated at $\Ord$), for $1\leq i\leq n$, the strictly $i$-measurable cardinals are precisely the elements of $\hat C_i$.
\end{thm}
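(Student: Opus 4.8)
The plan is to run an $\Ord$-length iteration of $m_n^\#$ that generalizes the construction in the proof of Theorem~\ref{th:L[C1,C2]}, proceeding by induction on $n$ (the cases $n=1$ and $n=2$ being Theorems~\ref{thm:L[C]} and~\ref{th:L[C1,C2]}). Below the critical point $\bar\kappa$ of its top measure, the mouse $m_n^\#$ carries the full hierarchy of strictly $i$-measurable cardinals for $i<n$ together with a proper class (from $\bar\kappa$'s point of view) of $n$-measurable cardinals, and the top measure will be iterated, exactly as the top measure of $m_2^\#$ was, to manufacture additional measurables at the lower levels whenever the iteration runs out of room. The goal is to arrange that the strictly $i$-measurable cardinals of the final (truncated) model land precisely on $\hat C_i$ for every $1\leq i\leq n$ at once.

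First I would describe the iteration. The driving move is still to iterate the least strictly $1$-measurable cardinal onto the next element of $\hat C_1$, with the iteration embedding (whose critical point is that cardinal) carrying every higher-level measurable upward. The construction is naturally nested: for $i\geq 2$, at any stage the segment of the model lying below its least strictly $i$-measurable cardinal resembles a model produced by the construction for the shorter list $C_1,\ldots,C_{i-1}$, so one only positions level $i$ after levels $1,\ldots,i-1$ below it have been placed onto $\hat C_1,\ldots,\hat C_{i-1}$. Whenever a direct-limit stage is reached at which the least strictly $i$-measurable cardinal has become a limit of correctly positioned strictly $(i-1)$-measurable cardinals but the next element of $\hat C_i$ has not yet been hit, we iterate up that $i$-measurable cardinal (ultimately driving this from the top measure, as in Theorem~\ref{th:L[C1,C2]}) to create more strictly $(i-1)$-measurable cardinals and continue.

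Then I would verify, by induction on the level $i$, that level $i$ lands exactly on $\hat C_i$. The essential input at level $i$ is that $C_i$ is $\Sigma_1$-stable relative to $C_1,\ldots,C_{i-1}$: if $\gamma\in C_i$ is the next target, then every iterate of the kind we perform of length $\beta<\gamma$ is, being $\Sigma_1$-definable from $\beta$ and $C_1,\ldots,C_{i-1}$, an element of $\langle H_\gamma,\in,C_1,\ldots,C_{i-1}\rangle$, so the least strictly $i$-measurable cardinal remains below $\gamma$ throughout these stages. The threading argument of Theorem~\ref{th:L[C1,C2]} then applies verbatim: tracking the thread $t$ whose value at each stage is the least strictly $i$-measurable cardinal, any thread $s<t$ is eventually constant (at a stage where $t$ equals the stage ordinal itself, all subsequent critical points lie above it, freezing $s$), and hence $t$ is sent to precisely the intended element of $\hat C_i$.

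The main obstacle will be the bookkeeping required to run all $n$ levels of the construction simultaneously and to make the recursive ``resembles the construction for $C_1,\ldots,C_{i-1}$'' description precise, together with checking that the single threading argument localizes correctly below each element of every $C_i$. All of this is governed by the nested $\Sigma_1$-stability hypotheses, which at each level supply exactly the room needed to position the next block of measurables; once the localization below each $\gamma\in C_i$ is in hand, the conclusion follows as before. Finally, letting $M$ be the direct limit and $M_{C_1,\ldots,C_n}$ its truncation at $\Ord$ (the cardinal carrying the top measure of $M$), the construction guarantees that for each $1\leq i\leq n$ the strictly $i$-measurable cardinals of $M_{C_1,\ldots,C_n}$ are exactly the elements of $\hat C_i$.
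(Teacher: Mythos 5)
Your proposal matches the paper's intent exactly: the paper gives no separate proof of this theorem, stating only that Theorem~\ref{th:L[C1,C2]} "easily generalizes" to $n$ nested clubs, and your argument is precisely that generalization --- the nested positioning of strictly $i$-measurables onto $\hat C_i$, the use of $\Sigma_1$-stability of $C_i$ relative to $C_1,\ldots,C_{i-1}$ to keep the least strictly $i$-measurable below the next target, iterating up higher-level measures (ultimately the top measure) when the construction runs out of room, and the threading argument at limit stages. In fact you supply more detail than the paper does, and the details are correct.
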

\begin{thm}\label{th:mouseInModelGeneral}
  Suppose that $C_1\supseteq C_2\supseteq \cdots\supseteq C_n$ are
  class clubs of uncountable cardinals such that $C_i$ is
  $\Sigma_1$-stable relative to $C_1,\ldots, C_{i-1}$ for all
  $1<i\leq n$. Then $M_{ C_1^*,\ldots,C_n^*}$ (obtained as in
  Theorem~\ref{th:L[C1,...,Cn]}) is contained in $L[C_1,\ldots, C_n]$.
\end{thm}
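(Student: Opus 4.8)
The plan is to follow the proof of Theorem~\ref{th:mouseInModel} essentially verbatim, simply tracking all $n$ levels of measures at once. By Theorem~\ref{th:L[C1,...,Cn]} applied to the starred clubs $C_1^*\supseteq\cdots\supseteq C_n^*$, in the model $M_{C_1^*,\ldots,C_n^*}$ the strictly $i$-measurable cardinals are exactly the elements of $\hat C_i^*$. For each $1\leq i\leq n$ and each $\alpha\in\hat C_i^*$, let $U^i_\alpha\subseteq P(\alpha)^{M_{C_1^*,\ldots,C_n^*}}$ be the iteration measure carried by the $i$-measurable cardinal $\alpha$, and set $\vec U^i=\la U^i_\alpha\mid\alpha\in\Ord\ra$ (trivial off $\hat C_i^*$). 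Since $M_{C_1^*,\ldots,C_n^*}$ is a (truncated) iterate of the mouse $m_n^\#$, it has the fine-structural form $L[\vec U^1,\ldots,\vec U^n]$, and the entire task is to show this model can be built inside $L[C_1,\ldots,C_n]$.

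First I would establish, for each $i$ and each $\alpha\in\hat C_i^*$, the tail characterization: a set $X\in P(\alpha)^{M_{C_1^*,\ldots,C_n^*}}$ lies in $U^i_\alpha$ if and only if $X$ contains a tail of $C_i\cap\alpha$. This is exactly the observation already recorded in the proof of Theorem~\ref{th:L[C1,...,Cn]}, namely that at each stage of the construction we iterate the $i$-measurable cardinal's measure with its critical points running through $\hat C_i$, iterating up higher-level measures only to create room for more $i$-measurables. Since $\alpha\in\hat C_i^*$ is a limit point of $C_i$, the critical sequence leading to $\alpha$ is cofinal in $C_i\cap\alpha$, so $U^i_\alpha$ is precisely the filter generated by the tails of $C_i\cap\alpha$.

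Given the tail characterization, the crucial point is that the restriction of each predicate $\vec U^i$ to any set is definable from $C_i$ alone: for $X\subseteq\alpha$ with $\alpha\in\hat C_i^*$, deciding whether $X\in U^i_\alpha$ only requires checking whether $X$ contains a tail of $C_i\cap\alpha$, which $L[C_1,\ldots,C_n]$ can do from $C_i$. Consequently the relativized $L$-construction producing $L[\vec U^1,\ldots,\vec U^n]$ can be carried out internally in $L[C_1,\ldots,C_n]$: at each stage $\gamma$, the already-built level $L_\gamma$ lies in $L[C_1,\ldots,C_n]$ by induction, and the restrictions $\vec U^i\cap L_\gamma$ are computable there using only $C_i$ (together with the $C_i$-definable index set $\hat C_i^*$), so the next level is again in $L[C_1,\ldots,C_n]$. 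This shows $M_{C_1^*,\ldots,C_n^*}=L[\vec U^1,\ldots,\vec U^n]\subseteq L[C_1,\ldots,C_n]$, as desired.

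The main obstacle I expect is making the tail characterization rigorous uniformly across all $n$ levels, since the iteration of Theorem~\ref{th:L[C1,...,Cn]} interleaves the levels in a complicated way, repeatedly iterating up a higher-level measure to manufacture fresh lower-level measurables. One must verify that, despite this interleaving, the measure at each $\alpha\in\hat C_i^*$ tracks $C_i$ and no other club; this is where the nesting $C_1\supseteq\cdots\supseteq C_n$ and the $\Sigma_1$-stability of each $C_i$ relative to $C_1,\ldots,C_{i-1}$ do the work of keeping the levels from interfering and keeping the critical sequences aligned with their respective clubs. Once this bookkeeping is in place, the reduction to $L[\vec U^1,\ldots,\vec U^n]$ and the internalization of its construction inside $L[C_1,\ldots,C_n]$ are routine generalizations of the $n=2$ case treated in Theorem~\ref{th:mouseInModel}.
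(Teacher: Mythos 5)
Your proposal is correct and follows essentially the same route as the paper: the paper states this theorem as an immediate generalization of the $n=2$ case (Theorem~\ref{th:mouseInModel}), whose proof is exactly your argument — define the iteration measures $\vec U^i$ on the elements of $\hat C_i^*$, observe that membership in $U^i_\alpha$ is equivalent to containing a tail of $C_i\cap\alpha$, and conclude $M_{C_1^*,\ldots,C_n^*}=L[\vec U^1,\ldots,\vec U^n]\subseteq L[C_1,\ldots,C_n]$. Your additional remarks on internalizing the relativized $L$-construction and on verifying the tail characterization across the interleaved levels are sensible elaborations of details the paper leaves implicit.
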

\begin{thm}
  For all $n<\omega$, if $m_{n+1}^\#$ exists, then $m_n^\#$ is in the
  stable core.
\end{thm}
\begin{proof}
By Proposition~\ref{prop:SigmaElementaryClubs}, for $i\geq 1$, the stable core can define class clubs $C_i$ of strong limit cardinals $\alpha$ such that $H_\alpha\prec_{\Sigma_i} V$. In particular, each $C_i$ is $\Sigma_1$-stable relative to $C_1,\ldots,C_{i-1}$. Fix some $n<\omega$. If $m_{n+1}^\#$ exists, then by Theorem~\ref{th:mouseInModelGeneral}, $M_{C_1^*,\ldots,C_{n+1}^*}$ is contained in the stable core, and so in particular, the stable core has $m_n^\#$.
\end{proof}

\section{Characterizing models $L[C_1,\ldots,C_n]$}\label{sec:characterization}
In this section, we will generalize Welch's arguments in
\cite{Welch:ClubClassHartigQuantifierModel} to show that, in the
presence of many measurable cardinals, models $L[C_1,\ldots,C_n]$,
where $$C_1\supseteq C_2\supseteq\cdots\supseteq C_n$$ are class clubs
of uncountable cardinals such that $C_i$ is $\Sigma_1$-stable relative
to\break $C_1,\ldots,C_{i-1}$, are truncations to $\Ord$ of forcing extensions of an iterate of the mouse $m_n^\#$ via a full support product of Prikry forcings.

In \cite{Fuchs:DiscreteProductPrikry}, Fuchs defined, given a discrete
set $D$ of measurable cardinals, a Prikry-type forcing $\p_D$ to
singularize them all, as follows. For every $\alpha\in D$, we fix a
normal measure $\mu_\alpha$ on $\alpha$ with respect to which the
forcing $\p_D$ will be defined. Conditions in $\p_D$ are pairs
$\la h, H\ra$ such that $H$ is a function on $D$ with
$H(\alpha)\in \mu_\alpha$ and $h$ is a function on $D$ with
finite support such that $h(\alpha)\in [\alpha]^{\lt\omega}$ is a
finite sequence of elements of $\alpha$ below the least element of
$H(\alpha)$ and above all $\beta\in D$ with $\beta<\alpha$. Extension is defined by $(h,H)\leq (f,F)$ if for all $\alpha\in D$,
$H(\alpha)\subseteq F(\alpha)$, $h(\alpha)$ end-extends
$f(\alpha)$, and
$h(\alpha)\setminus f(\alpha)\subseteq F(\alpha)$. Note that the first
coordinate of the pair has finite support while the second coordinate
has full support so that the forcing is a mix of a finite support and
a full support-product. It is not difficult to see that the Magidor
iteration of Prikry forcing for a discrete sequence $D$ of measurable
cardinals is equivalent to $\p_D$. For the definition and properties of the Magidor iteration see Section 6 of \cite{gitik:handbook}.
\begin{thm}[see \cite{Fuchs:DiscreteProductPrikry}]\label{thm:propertiesOfPrikry}
The forcing $\p_D$ has the $|D|^+$-cc, preserves all cardinals, and preserves all cofinalities not in $D$.
\end{thm}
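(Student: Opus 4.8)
The plan is to derive all three properties from the \emph{Prikry property} of $\p_D$, so the first and main task is to establish it: for every condition $(h,H)$ and every sentence $\varphi$ in the forcing language there is a \emph{direct extension} $(h,H')\leq^*(h,H)$ — one with the same stem $h$ but with $H'(\alpha)\subseteq H(\alpha)$ for all $\alpha\in D$ — deciding $\varphi$. First I would fix $\varphi$ and work coordinate by coordinate, using the normality (Rowbottom property) of each $\mu_\alpha$ to thin $H(\alpha)$ so that whether a one-point extension of the stem at $\alpha$ can be refined to decide $\varphi$ depends only on that point through a homogeneous colour. Because the stems have \emph{finite} support, only finitely many coordinates ever carry a nontrivial stem, and the discreteness of $D$ — each $\alpha\in D$ satisfies $\sup(D\cap\alpha)<\alpha$ — guarantees that the coordinatewise shrinkings assemble into a single legitimate condition, landing in $\mu_\alpha$ on every coordinate simultaneously. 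Coordinating a full-support family of measure-one shrinkings into one condition is exactly where the real work lies, and I expect this to be the \textbf{main obstacle}; it is the discreteness of $D$ (together with the finite support of the first coordinate) that makes the product behave like the Magidor iteration \cite{gitik:handbook} and lets the single-cardinal Prikry argument go through uniformly, as carried out by Fuchs \cite{Fuchs:DiscreteProductPrikry}.

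For the chain condition I would argue from the stems. If $(h,H)$ and $(h,H')$ share the same stem they are compatible, since $(h,H\cap H')$ — where $(H\cap H')(\alpha)=H(\alpha)\cap H'(\alpha)$ — is a common extension, and $H(\alpha)\cap H'(\alpha)\in\mu_\alpha$ because each $\mu_\alpha$ is a filter. Hence in any antichain the stems are pairwise distinct, and a stem is a finite partial function $\alpha\mapsto h(\alpha)\in[\alpha]^{<\omega}$. Applying the $\Delta$-system lemma to the finite supports, and then matching stem values on the finite root, reduces an antichain to a family whose stems agree on a common root and are otherwise supported on pairwise disjoint sets. The delicate point is compatibility on the non-root coordinates, where the extra points of one stem must lie in the other condition's measure-one set; this is precisely where the completeness of the $\mu_\alpha$ and the discreteness of $D$ enter, and carrying it out bounds the size of any antichain to yield the $|D|^+$-cc (for a proper class $D$ this is the $\Ord$-cc that makes $\p_D$ a tame class forcing).

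Finally, cardinal and cofinality preservation I would obtain by combining the Prikry property with closure of the direct-extension order. For $\gamma\in\Ord$ factor $\p_D\cong\p_{D\cap\gamma}\times\p_{D\setminus\gamma}$; the tail $\p_{D\setminus\gamma}$ has full support on its second coordinate, so its direct-extension order is $\mu$-closed for $\mu=\min(D\setminus\gamma)$, and hence by the Prikry property $\p_{D\setminus\gamma}$ adds no new bounded subsets of $\mu$ and collapses no cardinals in that range. Running this localisation across all $\gamma$ shows that no cardinal is collapsed below any element of $D$, while the $|D|^+$-cc handles the global scale; together they give preservation of all cardinals. For cofinalities, each $\alpha\in D$ acquires its cofinal generic Prikry sequence, so $\cf(\alpha)$ drops to $\omega$; for a cardinal $\delta\notin D$, any new short cofinal sequence in $\delta$ would, by the Prikry property and the closure of the tail, already be added by a set-sized piece of the forcing that cannot change $\cf(\delta)$, so every cofinality outside $D$ is preserved. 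These last verifications are routine Prikry bookkeeping once the Prikry property is in hand; as noted, that property is the crux, and I would cite \cite{Fuchs:DiscreteProductPrikry} for its proof.
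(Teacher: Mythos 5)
A preliminary remark: the paper contains no proof of this theorem. It is imported wholesale from Fuchs \cite{Fuchs:DiscreteProductPrikry}, with the Prikry property credited separately to Section 6 of \cite{gitik:handbook}, so there is no in-paper argument to compare yours against; I can only judge the sketch on its own terms. Your overall architecture --- establish the Prikry property by coordinatewise thinning, then derive preservation of cardinals and of cofinalities outside $D$ by factoring $\p_D\cong\p_{D\cap\gamma}\times\p_{D\setminus\gamma}$ and playing the $\min(D\setminus\gamma)$-closure of the direct-extension order on the tail against the chain condition of the head --- is exactly the standard route taken in those sources, and deferring the combinatorial core of the Prikry property to them is consistent with what the paper itself does.

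The one place where you commit to a concrete argument that would fail is the chain condition. Your opening observation --- conditions with the same stem are compatible via $(h,H\cap H')$ --- is correct and is in fact the \emph{entire} proof: it bounds every antichain by the number of stems, which is $(\sup D)^{<\omega}=\sup D$, giving the $(\sup D)^{+}$-cc. The $\Delta$-system continuation, however, cannot be carried out. After thinning to a common finite root $r$, the stems restricted to $r$ range over $\max(r)$-many values, which may vastly exceed $|D|^{+}$, so you cannot match stem values on the root inside a family of size $|D|^{+}$. Worse, the disjoint-support case genuinely breaks: if $h$ is supported only at $\alpha$ with $h(\alpha)=\la\xi\ra$ and $h'$ is supported only at $\beta\neq\alpha$, the two conditions are incompatible whenever $\xi\notin H'(\alpha)$, and no appeal to completeness of $\mu_\alpha$ or discreteness of $D$ can prevent the \emph{given} measure-one set $H'(\alpha)$ from omitting $\xi$. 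So disjointness of stem supports buys nothing, and the $\Delta$-system detour should be deleted in favour of the bare stem count. Note also that the stem count is optimal: for $\kappa_0=\min D$, the conditions with stem $\la\xi\ra$ at $\kappa_0$ and measure-one set $\kappa_0\setminus(\xi+1)$ there are pairwise incompatible (distinct one-point stems have no common end-extension), yielding an antichain of size $\kappa_0$. In particular the literal bound ``$|D|^{+}$-cc'' is unattainable when, say, $D$ is countable, and should be read as the $(\sup D)^{+}$-cc; in the paper's application $D$ is unbounded in the ordinals of the iterate, so $|D|=\sup D$ and the discrepancy is invisible.

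One further caveat on the last clause: Proposition~\ref{prop:prikryProperty} and your localisation only control functions with \emph{bounded} range, so preserving the cofinality of a regular limit of $D$ not itself in $D$ requires the slightly stronger statement that every function from some $\mu$ into the ordinals added by the tail is dominated by a ground-model function, proved again from $\leq^{*}$-closure plus the Prikry property. This is routine, but it is not literally a corollary of ``the tail adds no new bounded subsets of $\mu$.''
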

\noindent The forcing $\p_D$ also has the Prikry property, namely, given a condition $(h,H)\in\p_D$ and a sentence $\varphi$ of the forcing language, there is a condition $(h,H^*)$ deciding $\varphi$ such that for every $\alpha\in D$, $H^*(\alpha)\subseteq H(\alpha)$ (see \cite{gitik:handbook} Section~6 for details).

For an ordinal $\lambda$, let $D_{<\!\lambda}$ be $D\restrict\lambda$ and $D_{\geq\!\lambda}$ be the rest of $D$. The forcing $\p_D$ factors as $\p_{D_{<\!\lambda}}\times \p_{D_{\geq\!\lambda}}$.
\begin{prop}\label{prop:prikryProperty}
Suppose $f$ is $V$-generic for $\p_D$. If $g:\gamma\to V_\gamma$ is a function in $V[f]$ with $\gamma<\lambda$, then $g$ is added by $f\restrict\lambda$.
\end{prop}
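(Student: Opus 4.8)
The plan is to use the factorization $\p_D\cong\p_{D_{<\!\lambda}}\times\p_{D_{\geq\!\lambda}}$ recorded above, write $f\cong (f\restrict\lambda)\times f_{\geq\!\lambda}$ with $f\restrict\lambda$ the $\p_{D_{<\!\lambda}}$-generic and $f_{\geq\!\lambda}$ the tail generic, and reduce the proposition to the single claim that the tail forcing $\p_{D_{\geq\!\lambda}}$ adds no new bounded subsets of $\mu:=\min(D_{\geq\!\lambda})$ over $V[f\restrict\lambda]$. Since $\gamma<\lambda\leq\mu$, this suffices: once the tail adds no new subset of any $\delta<\mu$, no new element of $V_\mu$ is added, and $g$ can be recovered in $V[f\restrict\lambda]$.

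First I would set up the argument over the intermediate model $V[f\restrict\lambda]$. By the discreteness of $D$ there is $\beta<\mu$ with $(\beta,\mu)\cap D=\emptyset$, so $D_{<\!\lambda}=D\cap\lambda$ is bounded below $\mu$ and hence $\p_{D_{<\!\lambda}}\in V_\mu$ has size $<\mu$. By the \Levy--Solovay theorem each normal measure $\mu_\alpha$ with $\alpha\in D_{\geq\!\lambda}$ (including $\alpha=\mu$) generates a normal measure $\bar\mu_\alpha$ on $\alpha$ in $V[f\restrict\lambda]$, forcing with the $V$-version of $\p_{D_{\geq\!\lambda}}$ over $V[f\restrict\lambda]$ is equivalent to forcing with the $\p_{D_{\geq\!\lambda}}$ computed from the $\bar\mu_\alpha$, and by mutual genericity $f_{\geq\!\lambda}$ is $V[f\restrict\lambda]$-generic for it. I expect this transfer to be the main technical point: one must check that the Prikry property (noted after Theorem~\ref{thm:propertiesOfPrikry}) and the closure of the direct extension ordering genuinely survive to the intermediate model, which is exactly where the interaction between the two factors of the product must be controlled; discreteness is what makes $\p_{D_{<\!\lambda}}$ small enough for this to go through cleanly.

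With this in place the core computation is routine. The direct extension ordering $\leq^*$ of $\p_{D_{\geq\!\lambda}}$ (keep the stems, shrink each measure-one set) is $<\mu$-closed: given fewer than $\mu$ conditions with a common stem $h$, the coordinatewise intersection $H^*(\alpha)=\bigcap_i H_i(\alpha)$ is still $\bar\mu_\alpha$-large by $\alpha$-completeness, and $(h,H^*)$ is a common $\leq^*$-lower bound. Now let $\dot A$ name a subset of some $\delta<\mu$ and fix a condition $(h,H)$. For each $\xi<\delta$ the Prikry property applied to the sentence ``$\check\xi\in\dot A$'' yields $(h,H_\xi)\leq^*(h,H)$ deciding it; as $\delta<\mu$, the $<\mu$-closure produces $(h,H^*)\leq^*(h,H_\xi)$ for all $\xi$, so $(h,H^*)$ decides ``$\check\xi\in\dot A$'' for every $\xi<\delta$ simultaneously. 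Thus $A$ is read off from $(h,H^*)$ and the forcing relation in $V[f\restrict\lambda]$, so $A\in V[f\restrict\lambda]$. Hence $\p_{D_{\geq\!\lambda}}$ adds no new subset of any $\delta<\mu$, and by induction on rank no new element of $V_\mu$; in particular $V_\gamma^{V[f]}=V_\gamma^{V[f\restrict\lambda]}$.

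Finally I would conclude. Since $\mu$ is measurable, hence a strong limit, and $\gamma<\mu$, the set $V_\gamma$ has size $<\mu$, so fixing a bijection in $V[f\restrict\lambda]$ we may code $g\colon\gamma\to V_\gamma^{V[f\restrict\lambda]}$ as a subset of some $\delta<\mu$. By the previous paragraph this subset lies in $V[f\restrict\lambda]$, whence $g\in V[f\restrict\lambda]$; that is, $g$ is added by $f\restrict\lambda$, as claimed.
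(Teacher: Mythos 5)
Your proof is correct and takes essentially the same route as the paper's: factor $\p_D$ as $\p_{D_{<\!\lambda}}\times\p_{D_{\geq\!\lambda}}$, reduce to showing the tail adds nothing bounded over $V[f\restrict\lambda]$, and then combine the Prikry property with the ability to intersect fewer than $\mu$ many measure-one sets to produce a single direct extension deciding everything. The one point where you are more careful than the paper is in justifying, via \Levy--Solovay and a dense embedding, that the Prikry property and the closure of the direct extension order actually hold for the tail forcing over the intermediate model $V[f\restrict\lambda]$ --- a step the paper's proof passes over silently but which is handled exactly as you suggest (and as the paper itself does later when showing $\p_{D_1}$ densely embeds into $\bar\p_{D_1}$).
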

\begin{proof}
It suffices to see that $\p_{D_{\geq\!\lambda}}$ cannot add $g$ over $V[f\restrict\lambda]$. Let $\dot g$ be a $\p_{D_{\geq\!\lambda}}$-name for $g$ so that $\one_{\p_{D_{\geq\!\lambda}}}\forces \dot g:\check\gamma\to \check{V}_\gamma$. By the Prikry property of $\p_{D_{\geq\!\lambda}}$, for every $x\in V_\gamma$ and $\alpha<\gamma$, there is some condition $(\emptyset, H_{x,\alpha})$ deciding whether $\dot g(\check\alpha)=\check x$. There are less than $\kappa$ many such conditions, where $\kappa$ is the least measurable cardinal in $D$ greater than or equal to $\lambda$. Thus, we can intersect all the measure one sets on each coordinate of $H_{x,\alpha}$ to obtain a condition $(\emptyset, H)$ below all of the $(\emptyset, H_{x,\alpha})$. Clearly $(\emptyset, H)$ decides $\dot g$.
\end{proof}

Fuchs showed that the forcing $\p_D$ has a Mathias-like criterion for establishing when a collection of sequences is generic for it.
\begin{thm}[\cite{Fuchs:DiscreteProductPrikry}]\label{th:MathiasCriterion}
A function $f$ on $D$ such that $f(\alpha)\in[\alpha]^\omega$ is an $\omega$-sequence in $\alpha$ above $\beta\in D$ for every $\beta<\alpha$ is generic for $\p_D$ if and only if for every function $H$ on $D$ with $H(\alpha)\in \mu_\alpha$, $\bigcup_{\alpha\in D}f(\alpha)\setminus H(\alpha)$ is finite.
\end{thm}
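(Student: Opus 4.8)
The plan is to prove both directions, with the forward implication a routine density argument and the backward implication carrying the real content through a Mathias-style diagonalization built on the Prikry property. Throughout, for a function $f$ as in the statement let $G_f$ be the set of conditions $(h,H)\in\p_D$ such that for every $\alpha\in D$, $h(\alpha)$ is an initial segment of $f(\alpha)$ and $f(\alpha)\setminus h(\alpha)\subseteq H(\alpha)$; recall that $f$ is generic precisely when $G_f$ is a $V$-generic filter.

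For the forward direction, suppose $G_f$ is generic and fix $H\in V$ with $H(\alpha)\in\mu_\alpha$ for all $\alpha$. The set of conditions $(h,H')$ with $H'(\alpha)\subseteq H(\alpha)$ for every $\alpha$ is dense (given $(g,G)$, pass to $(g,\,\alpha\mapsto G(\alpha)\cap H(\alpha))$, truncated above the stem) and lies in $V$, so $G_f$ meets it in some $(h,H')$. By the definition of $G_f$ we get $f(\alpha)\setminus h(\alpha)\subseteq H'(\alpha)\subseteq H(\alpha)$, hence $f(\alpha)\setminus H(\alpha)\subseteq h(\alpha)$ for every $\alpha$. Since $h$ has finite support and each $h(\alpha)$ is finite, $\bigcup_{\alpha\in D}h(\alpha)$ is finite, and therefore so is $\bigcup_{\alpha\in D}f(\alpha)\setminus H(\alpha)$.

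For the backward direction, assume the Mathias condition. First I would check that $G_f$ is a filter: it contains the trivial condition, is upward closed, and is directed, since any two stems appearing in $G_f$ are initial segments of the same $f$ and hence comparable, so one can take pointwise intersections of the measure-one parts (truncated above the longer stem, which preserves measure one as the $\mu_\alpha$ are nonprincipal). The substance is to show that $G_f$ meets every dense open $\mathcal{D}\in V$ below a given $(g,G)\in G_f$. Here I would first upgrade the stated Prikry property to its usual strong (capturing) form: there is a measure-one shrinking $H^*\le G$ with the same stem $g$ such that, for every stem $h\supseteq g$ compatible with $H^*$, whether some same-stem extension of $(h,H^*)$ lands in $\mathcal{D}$ depends only on $h$ and not on further tail-shrinking. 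To make this decision well-founded in the class/set product, I would use the factorization $\p_D\cong\p_{D_{<\lambda}}\times\p_{D_{\geq\lambda}}$ together with Proposition~\ref{prop:prikryProperty} to localize each such decision to a bounded initial product, where every factor is a genuine set forcing and the capturing property is classical.

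The main obstacle, and the point at which discreteness of $D$ is essential, is assembling these stem-indexed shrinkings into a single reducing condition $(g,H)$. I would do this coordinate-wise: at each $\alpha\in D$ the discreteness of $D$ guarantees that the relevant stem data below $\alpha$ is coded by ordinals below $\alpha$, so the normality of $\mu_\alpha$ lets me take a diagonal intersection of the corresponding $\alpha$-th coordinates into a single set $H(\alpha)\in\mu_\alpha$, with the usual normality argument ensuring $(g,H)$ still reduces $\mathcal{D}$. Finally I would apply the Mathias hypothesis to this $H\in V$: since $\bigcup_{\alpha\in D}f(\alpha)\setminus H(\alpha)$ is finite, there is a stem $h$ of $f$ with $(h,H)\in G_f$, and reducedness then yields a same-stem extension of $(h,H)$ in $\mathcal{D}$, which also lies in $G_f$. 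The delicate part throughout is that the full support on the measure-one coordinate forces all these decisions and diagonal intersections to be carried out simultaneously across $D$ while keeping each coordinate measure one, which is exactly what the normality-plus-discreteness bookkeeping, localized via the factorization and Proposition~\ref{prop:prikryProperty}, secures.
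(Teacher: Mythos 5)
The paper does not prove this theorem; it is quoted from Fuchs, so there is no in-paper argument to compare against and I am judging your proposal on its own. Your forward direction is complete and correct, and your backward direction follows the right general strategy (strong Prikry reduction plus diagonal intersection), but it has one genuine gap at the very last step. You say that once the Mathias hypothesis produces a stem $h$ of $f$ with $(h,H)\in G_f$, ``reducedness then yields a same-stem extension of $(h,H)$ in $\mathcal{D}$, which also lies in $G_f$.'' A same-stem extension of $(h,H)$ is a condition $(h,H')$ with $H'(\alpha)\subseteq H(\alpha)$, and such a condition need \emph{not} lie in $G_f$: membership in $G_f$ requires $f(\alpha)\setminus h(\alpha)\subseteq H'(\alpha)$, and the shrink $H'$ may well discard elements of $f(\alpha)$ that were in $H(\alpha)$. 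The same defect is already present in your formulation of the capturing property (``whether \emph{some} same-stem extension of $(h,H^*)$ lands in $\mathcal{D}$''): the existential form tells you only that $h$ is a positive stem, which by itself produces no member of $\mathcal{D}\cap G_f$. What the argument actually needs is the strong Prikry lemma in its universal form: a single $H$ (fixed \emph{before} $f$ is consulted) such that for every positive stem $h$ compatible with $H$ there is a finite shape datum with the property that \emph{every} extension $h'$ of $h$ by points of $H$ of that shape puts $(h',H)$ itself (with $H$ merely truncated above $h'$) into $\mathcal{D}$. Only then can you choose $h'$ to be the appropriate initial segment of $f$ and conclude $(h',H)\in\mathcal{D}\cap G_f$, since the measure-one part has not been shrunk below what $G_f$ tolerates.

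A secondary, smaller issue is in the assembly of the reducing condition: your diagonal intersection at coordinate $\alpha$ only ranges over stem data below $\alpha$, but a stem may have support meeting $[\alpha,\Ord)$, and the witnessing measure-one sets at coordinate $\alpha$ can a priori depend on that upper part. The standard repair is the one you gesture at -- use the Prikry property of the tail factor $\p_{D_{\geq\lambda}}$ to decide, without lengthening stems, the relevant statements for each of the fewer-than-$\kappa_\lambda$ many stems supported below $\lambda$ (where $\kappa_\lambda$ is the least element of $D$ at or above $\lambda$), and then intersect -- but this counting-plus-tail-Prikry step is the actual content of Fuchs's proof and needs to be written out rather than attributed to Proposition~\ref{prop:prikryProperty}, which concerns functions in the extension rather than decisions about dense classes.
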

We will now give the technical set-up for the forcing construction that we want to perform over iterates of the mice $m_n^\#$.

Let $\ZFC^-_I$ be the theory consisting of $\ZFC^-$ together with the assertion that there is a largest cardinal $\kappa$ and that it is inaccessible, namely $\kappa$ is regular and for every $\alpha<\kappa$, $P(\alpha)$ exists and has size smaller than $\kappa$. Note that, in particular, $V_\alpha$ exists in models of $\ZFC^-_I$ for all ordinals $\alpha\leq\kappa$. Natural models of $\ZFC^-_I$ are $H_{\kappa^+}$ for an inaccessible cardinal $\kappa$. The theory $\ZFC^-_I$ is bi-interpretable with the second-order class set theory ${\rm KM}+{\rm CC}$, Kelley-Morse set theory (${\rm KM}$) together with the Class Choice Principle $({\rm CC})$ \cite{Marek:KM}. Models of Kelley-Morse are two-sorted of the form $\mathscr V=(V,\in,\mathcal C)$, with $V$ consisting of the sets, $\mathcal C$ consisting of classes, and $\in$ being a membership relation between sets as well as between sets and classes. The axioms of Kelley-Morse are $\ZFC$ together with the following axioms for classes: extensionality, existence of a global well-order class, class replacement asserting that every class function restricted to a set is a set, and comprehension for all second-order assertions. The Class Choice Principle ${\rm CC}$ is a scheme of assertions, which asserts for every second-order formula $\varphi(x,X,Y)$ that if for every set $x$, there is a class $X$ such that $\varphi(x,X,Y)$ holds, then there is a single class $Z$ choosing witnesses for every set $x$, in the sense that $\varphi(x,Z_x,Y)$ holds for every set $x$, where $Z_x=\{y\mid \la x,y\ra\in Z\}$ is the $x$-th slice of $Z$. If $\V=(V,\in,\mathcal C)$ is a model of ${\rm KM}+{\rm CC}$, then the collection of all extensional well-founded relations in $\mathcal C$, modulo isomorphism and with a natural membership relation, forms a model $M_{\V}$ of $\ZFC^-_I$, whose largest cardinal $\kappa$ is (isomorphic to) $\Ord$, such that $V_{\kappa}^{M_{\V}}\cong V$ and the collection of all subsets of $V_{\kappa}^{M_{\V}}$ in $M_{\V}$ is precisely $\mathcal C$ (modulo the isomorphism).\footnote{We will from now on ignore the isomorphism and assume we have actual equality.} On the other hand, given any model $M\models\ZFC^-_I$, we have that $\mathscr V=(V_\kappa^M,\in,\mathcal C)$, where $\mathcal C$ consists of all subsets of $V_\kappa^M$ in $M$, is a model of ${\rm KM}+{\rm CC}$, and moreover, $M_{\V}$ is then precisely $M$.

The bi-interpretability of the two theories was used by Antos and Friedman in \cite{AntosFriedman:HyperclassForcing} to develop a theory of hyperclass forcing over models of ${\rm KM}+{\rm CC}$. A \emph{hyperclass forcing} over a model $\V=(V,\in,\mathcal C)\models{\rm KM}+{\rm CC}$ is a partial order on a sub-collection of $\mathcal C$ that is definable over $\V$. Suppose that $G$ is $\p$-generic for a hyperclass-forcing $\p$ over $\V$, meaning that it meets all the definable dense sub-collections of $\p$. We move to $M_{\V}$, over which $\p$ is a definable class-forcing, and consider the forcing extension $M_{\V}[G]$. The forcing $\p$ may not preserve $\ZFC^-_I$, but whenever it does, we define that the hyperclass-forcing extension $\V[G]$ is the Kelley-Morse model consisting of $V_\kappa^{M_{\V}[G]}$ together with all the subsets of $V_\kappa^{M_{\V}[G]}$ in $M_{\V}[G]$.

An $\Ord$-length iterate $M$ of a mouse $m^{\#}_n$ is obviously a model of $\ZFC^-_I$ with largest cardinal $\Ord$, and moreover it has a definable global well-ordering. Thus, $M$ naturally gives rise to a model of ${\rm KM}+{\rm CC}$, namely its truncation at $\Ord$, whose classes are the subsets of $V^M_{\Ord}$.

Let $M$ be a model of $\ZFC^-_I$ with a largest cardinal $\kappa$ and a definable well-ordering of the universe. Let $D$ be a discrete set in $M$ of measurable cardinals below $\kappa$ and suppose that $D$ is unbounded in
$\kappa$. Over $M$, $\p_D$ is an $\Ord$-cc class forcing notion (with set-sized
antichains). Class forcing works the same way over models of $\ZFC^-$ as it does over models of $\ZFC$. Pretame forcing (see \cite{friedman:classforcing} for definition and properties) preserves $\ZFC^-$ to forcing extensions and has definable forcing relations (this is due to Stanley and can be found in \cite{PeterHolyRegulaKrapfPhilippSchlicht:classforcing2}), and $\Ord$-cc forcing is pretame. Although ``mixing of names" is not always doable with class forcing that has proper class-sized antichains, it still works for $\Ord$-cc forcing. Finally, the existence of a definable global well-ordering gives that the Mathias criterion of Theorem~\ref{th:MathiasCriterion} still holds in this setting.

Forcing with $\p_D$ preserves the inaccessibility of $\kappa$ by Theorem~\ref{thm:propertiesOfPrikry}, while singularizing all the measurable cardinals below it. Thus, in particular, a forcing extension by $\p_D$ remains a model of $\ZFC^-_I$.
\begin{prop} Given a $\p_D$-generic $f$, we have that $V_\kappa^{M[f]}=V_\kappa^M[f]$.
\end{prop}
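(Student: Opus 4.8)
The plan is to prove the two inclusions separately, reading $V_\kappa^M[f]$ as the increasing union $\bigcup_{\lambda<\kappa}V_\kappa^M[f\restrict\lambda]$ of honest set-forcing extensions of $V_\kappa^M$. This reading is justified by the factorization $\p_D\cong\p_{D_{<\lambda}}\times\p_{D_{\geq\lambda}}$: for each $\lambda<\kappa$ the restriction $f\restrict\lambda$ is $M$-generic for $\p_{D_{<\lambda}}$, and this latter forcing is a genuine set forcing lying in $V_\kappa^M$, since its conditions $(h,H)$ have $H$ supported on the bounded set $D_{<\lambda}$ and $\kappa$ is inaccessible in $M$, so $|\p_{D_{<\lambda}}|<\kappa$.

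For the easy inclusion $V_\kappa^M[f]\subseteq V_\kappa^{M[f]}$, I would fix $\lambda<\kappa$ and observe that, since $\p_{D_{<\lambda}}\in V_\kappa^M$ is set forcing and name-evaluation never raises rank, every $\tau_{f\restrict\lambda}$ with $\tau\in V_\kappa^M$ a $\p_{D_{<\lambda}}$-name lies in $M[f]$ and has rank $<\kappa$; as $\kappa$ remains inaccessible, hence a strong limit cardinal, in $M[f]$ by Theorem~\ref{thm:propertiesOfPrikry}, such sets genuinely belong to $V_\kappa^{M[f]}$. Taking the union over $\lambda<\kappa$ yields this inclusion.

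The substance is the converse, $V_\kappa^{M[f]}\subseteq V_\kappa^M[f]$, which I would drive with Proposition~\ref{prop:prikryProperty}. Given $x\in V_\kappa^{M[f]}$, inaccessibility of $\kappa$ in $M[f]$ (Theorem~\ref{thm:propertiesOfPrikry}) lets me fix in $M[f]$ a bijection of some $\delta<\kappa$ onto $\operatorname{tc}(\{x\})$ and read off the induced extensional well-founded relation $E\subseteq\delta\times\delta$ coding $(\operatorname{tc}(\{x\}),\in)$, hence coding $x$. Viewing the characteristic function of $E$ as a function $g\colon\gamma\to 2$ for a suitable $\gamma<\kappa$, I would pick $\lambda$ with $\gamma<\lambda<\kappa$ and apply Proposition~\ref{prop:prikryProperty} to conclude $g\in M[f\restrict\lambda]$. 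Since $\p_{D_{<\lambda}}$ is set forcing in $V_\kappa^M$ and $g$ has rank $<\kappa$, this gives $g\in V_\kappa^M[f\restrict\lambda]\subseteq V_\kappa^M[f]$; finally, as $V_\kappa^M[f]$ models enough set theory to Mostowski-collapse the well-founded extensional relation coded by $g$, it recovers $\operatorname{tc}(\{x\})$ and therefore $x$, so $x\in V_\kappa^M[f]$.

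The main obstacle is exactly this reduction of an arbitrary $x\in V_\kappa^{M[f]}$ to a bounded function to which Proposition~\ref{prop:prikryProperty} applies: it requires that $\kappa$ remain inaccessible in $M[f]$ (so the coding bijection exists below $\kappa$), which is Theorem~\ref{thm:propertiesOfPrikry}, and that $V_\kappa^M[f]$ be closed under the Mostowski collapse of its own coded relations. Everything else is routine product-forcing bookkeeping, legitimate here because $\p_D$ is $\Ord$-cc---hence pretame, with a definable forcing relation and mixing of names---over the model $M$ of $\ZFC^-_I$.
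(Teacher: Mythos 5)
Your proof is correct and follows essentially the same route as the paper: the nontrivial inclusion is handled by coding an arbitrary element of $V_\kappa^{M[f]}$ by a bounded set of ordinals (the paper uses a subset of a $\beth$-fixed point $\alpha<\kappa$, you use an extensional well-founded relation on some $\delta<\kappa$) and then invoking Proposition~\ref{prop:prikryProperty} together with the factorization $\p_D\cong\p_{D_{<\lambda}}\times\p_{D_{\geq\lambda}}$ to see that this code is added by an initial segment of $f$, hence by a set forcing in $V_\kappa^M$. Your write-up merely makes explicit the decoding step and the role of the inaccessibility of $\kappa$ in $M[f]$, which the paper leaves implicit.
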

\begin{proof}
The one inclusion $V_\kappa^M[f]\subseteq V_\kappa^{M[f]}$ is clear. For the other inclusion suppose that $a\in V_\kappa^{M[f]}$. There is some $\beth$-fixed point cardinal $\alpha<\kappa$ in $M[f]$ such that $a\in V_\alpha^{M[f]}$, so that $a$ is coded there by a subset $A$ of $\alpha$ in $M[f]$. By Proposition~\ref{prop:prikryProperty}, $A$ must be added by some initial segment of the product $\p_D$, and therefore is an interpretation of a name in $V_\kappa^M$ by an initial segment of $f$.
\end{proof}
We can also view $\p_D$ as a hyperclass forcing over the model $\V=(V_\kappa^M,\in,\mathcal C)$, with $\mathcal C$ being the collection of all subsets of $V_\kappa^M$ in $M$. Since $M[f]$ is a model of $\ZFC^-_I$ (because $\kappa$ remains inaccessible), we can form the forcing extension $\V[f]$, and it is (by definition) the model $(V_\kappa[f],\in,\mathcal C^*)$ with $\mathcal C^*$ being the collection of all subsets of $V_\kappa^M[f]$ in $M[f]$.

Now we go back to our specific setting in which we consider $\Ord$-length iterates of the mice $m^{\#}_n$.

Let $C$ be the class club of limit cardinals and let $M$ be the
(non-truncated) iterate model of $m_1^{\#}$ constructed by Welch (see Section~\ref{sec:measurables}). The model $M$ satisfies $\ZFC^-_I$ with the largest cardinal $\Ord$ and has a definable well-ordering of the universe. Let $D=\hat C$ be the collection of all measurable cardinals of $M$. Let $f$ be the function on $D$ such that $f(\omega\cdot\alpha+\omega)=\la \omega\cdot\alpha+n\mid n<\omega\ra$. Welch showed that $f$ is generic for $\p_D$ (defined using measures arising in the iteration) by verifying the Mathias criterion. As before, let $M_C$ be $M$ truncated at $\Ord$. Let $\mathcal C$ be the collection of subsets of $M_C$ in $M$ and $\mathcal C^*$ be the collection of all subsets of $M_C[f]$ in $M[f]$. With this set-up, Welch proved the following theorem.
\begin{thm}[Welch \cite{Welch:ClubClassHartigQuantifierModel}]\label{th:hyperclassWelch}
The model $M[\Card]$ is a class forcing extension of $M$ by the class forcing $\p_D$. Equivalently, the second-order model $(M_C[\Card], \in,\mathcal C^*)$ is a hyperclass-forcing extension of $(M, \in,\mathcal C)$ by the hyperclass forcing $\p_D$.
\end{thm}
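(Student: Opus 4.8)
The plan is to reduce the theorem to the first-order identity $L[\Card]=M_C[f]$ already established in Section~\ref{sec:measurables}, to lift it to an equality of the two $\ZFC^-_I$-models $M[\Card]$ and $M[f]$, and then to pass to the hyperclass statement purely by unwinding the bi-interpretation of $\ZFC^-_I$ with ${\rm KM}+{\rm CC}$ set up above.

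First I would check that the class $\Card$ and the Prikry function $f$ are inter-definable over $M$, so that $M[\Card]=M[f]$ as models of $\ZFC^-_I$. The class $D=\hat C$ of measurable cardinals of $M$ lies in $M$; for $\kappa=\aleph_{\omega\cdot\alpha+\omega}\in D$ the value $f(\kappa)=\la\aleph_{\omega\cdot\alpha+n}\mid n<\omega\ra$ is exactly the increasing enumeration of the infinite cardinals of $V$ filling the block below $\kappa$, and each $\aleph_{\omega\cdot\alpha+n}$ is read off from $\Card$; hence $f$ is definable over $M$ from $\Card$, giving $f\in M[\Card]$. Conversely the Prikry sequences return precisely the cardinals that were singularized: $\Card$ equals $D$ together with $\bigcup_{\kappa\in D}\ran(f(\kappa))$ (adjusting for the finitely many small cardinals), which is definable over $M$ from $D$ and $f$, so $\Card\in M[f]$. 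These two inclusions yield $M[\Card]=M[f]$.

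Next, since Welch verified the Mathias criterion of Theorem~\ref{th:MathiasCriterion}, $f$ is $\p_D$-generic over $M$, and $\p_D$ is an $\Ord$-cc, hence pretame, class forcing over the $\ZFC^-_I$-model $M$; by Theorem~\ref{thm:propertiesOfPrikry} it preserves the inaccessibility of the top cardinal $\Ord$, so $M[f]$ is again a model of $\ZFC^-_I$ and is a genuine class-forcing extension of $M$ by $\p_D$. Combined with the previous step, $M[\Card]=M[f]$ is the desired class-forcing extension, which is the first assertion. For the hyperclass formulation I would invoke the bi-interpretation: $M$ corresponds to the ${\rm KM}+{\rm CC}$-model $(M_C,\in,\mathcal{C})$, and $\p_D$ read as a hyperclass forcing over $(M_C,\in,\mathcal{C})$ is by construction $\p_D$ read as a class forcing over $M$. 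By the preceding proposition that $V_\kappa^{M[f]}=V_\kappa^M[f]$ for the top cardinal $\kappa=\Ord$, the truncation of $M[f]=M[\Card]$ at $\Ord$ is $M_C[f]=M_C[\Card]$, and its collection of subsets in $M[f]$ is, by definition, $\mathcal{C}^*$. Thus the hyperclass-forcing extension $\V[f]$ is exactly $(M_C[\Card],\in,\mathcal{C}^*)$, which gives the equivalence.

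The step that carries the real weight is the genericity of $f$, which is supplied by Welch's verification of the Mathias criterion. Granting that, the one point I would check with care is that the particular $\omega$-sequences chosen in $f$ enumerate exactly the $V$-cardinals in each gap of $D$ and introduce no extraneous ordinals, since it is this precise fit that makes $\Card$ recoverable over $M[f]$ and hence guarantees the full equality $M[\Card]=M[f]$ rather than a mere inclusion.
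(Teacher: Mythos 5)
Your proposal is correct and follows essentially the same route the paper takes: the paper itself only cites Welch for this theorem, but its set-up (the identity $L[\Card]=M_C[f]$ from Section~4, Welch's verification of the Mathias criterion for $f$, preservation of the inaccessibility of $\Ord$ via Theorem~\ref{thm:propertiesOfPrikry}, and the bi-interpretation of $\ZFC^-_I$ with ${\rm KM}+{\rm CC}$) is exactly the chain of reductions you spell out. Your added care about the inter-definability of $\Card$ and $f$ over $M$ is the same point the paper makes when it observes that $\vec W$ is recoverable from $f$ and that $L[\Card]=L[f]=M_C[f]$.
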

Indeed, it is not difficult to see that the function $f$ cannot be added by any set forcing over $M$ (or equivalently, cannot be added by any class forcing over $M_C$).
\begin{thm} \label{th:OnlyByClassForcing} In the notation of Theorem~\ref{th:hyperclassWelch}, $f$ is not set-generic over $M$. Equivalently, $f$
is not class-generic over the second-order model $(M_C[f],\in, \mathcal C)$.
\end{thm}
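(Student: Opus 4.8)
The plan rests on a single feature of $f$: by construction $f(\alpha)$ is a cofinal $\omega$-sequence in $\alpha$ for every $\alpha\in D$, so that $\cf^{M[f]}(\alpha)=\omega$, while $D=\hat C$ is unbounded in the largest cardinal $\kappa$ of $M$. It is convenient to prove the following slightly more general statement, of which the theorem is the instance $\bar M=M$, $\bar D=D$: whenever $\bar M\models\ZFC^-_I$ has largest cardinal $\lambda$, $\bar D$ is a discrete set of measurable cardinals of $\bar M$ unbounded in $\lambda$, and $\bar f$ is $\p_{\bar D}$-generic over $\bar M$ for measures $\langle\mu_\alpha\mid\alpha\in\bar D\rangle\in\bar M$, then $\bar f$ is not set-generic over $\bar M$. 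I would argue this by a minimal-counterexample induction on $\lambda$.

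Suppose then that $\bar f\in\bar M[G]$ for some set forcing $\p\in\bar M$ and some $\bar M$-generic $G\subseteq\p$, so that $\bar M[\bar f]\subseteq\bar M[G]$. The easy case is $|\p|^{\bar M}<\lambda$. Since $\bar D$ is unbounded in $\lambda$, I can fix $\alpha\in\bar D$ with $\alpha>|\p|$. Then $\p$ is $\alpha$-cc, so it preserves cofinalities $\ge\alpha$ and in particular the regularity of the measurable cardinal $\alpha$; thus $\cf^{\bar M[G]}(\alpha)=\alpha$. But $\bar f(\alpha)\in\bar M[G]$ is cofinal in $\alpha$ of order type $\omega$, whence $\cf^{\bar M[G]}(\alpha)=\omega<\alpha$, a contradiction. (Conceptually this is just that small forcing preserves the measurability, hence regularity, of $\alpha$.) So a minimal counterexample must have $|\p|=\lambda$.

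It then remains to reduce a counterexample with $|\p|=\lambda$ to one with a strictly smaller largest cardinal, contradicting minimality; this reflection step is where I expect the real work to lie. Using the definable global well-order of $\bar M$, I would build a continuous increasing chain of elementary substructures of $\langle\bar M,\in,\p,\dot f,\langle\mu_\alpha\mid\alpha\in\bar D\rangle\rangle$, where $\dot f$ is a fixed $\p$-name for $\bar f$, and take a closure point $\alpha$ that is also a limit point of $\bar D$; let $X$ be the corresponding substructure, $\pi\colon X\to\bar N$ its transitive collapse, and $\bar G=\pi[G\cap X]$. By elementarity $\bar N\models\ZFC^-_I$ has largest cardinal $\pi(\lambda)=\alpha<\lambda$, the set $\pi(\bar D)=\bar D\cap\alpha$ is a discrete family of measurables of $\bar N$ unbounded in $\alpha$, $\pi(\p)$ is a set forcing of $\bar N$, $\bar G$ is $\bar N$-generic for $\pi(\p)$, and $\bar g:=\pi(\dot f)^{\bar G}\in\bar N[\bar G]$ is $\p_{\bar D\cap\alpha}$-generic over $\bar N$ (the Mathias criterion of Theorem~\ref{th:MathiasCriterion} is first-order over the relevant structure and so reflects). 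Thus $\bar N,\pi(\p),\bar G,\bar g$ is a counterexample with largest cardinal $\alpha<\lambda$, contradicting minimality. The delicate points are that $\alpha$ be a genuine limit point of $\bar D$ (so $\bar D\cap\alpha$ stays unbounded in $\alpha$), that $\bar G=\pi[G\cap X]$ be genuinely $\bar N$-generic, and — most importantly — that $\bar g$ really be generic for the full forcing $\p_{\bar D\cap\alpha}$ rather than only for some smaller forcing; this is exactly the place where the full-support structure of $\p_D$, which is what makes its conditions have rank $\lambda$ and keeps $\p_D$ from being a set of $\bar M$, must be invoked.

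Finally, the equivalence with the second-order formulation is immediate from the bi-interpretability of $\ZFC^-_I$ with ${\rm KM}+{\rm CC}$ recorded in this section: a set forcing over $M$ corresponds, under this bi-interpretation, to a forcing of the Kelley--Morse model $(M_C,\in,\mathcal C)$ whose poset is a proper class, and $M$-generic extensions by it correspond to class-forcing extensions of $(M_C,\in,\mathcal C)$. Hence ``$f$ is set-generic over $M$'' and ``$f$ is class-generic over $(M_C,\in,\mathcal C)$'' express the same assertion, and the stated equivalence follows by negating both.
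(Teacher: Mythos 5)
Your easy case ($|\p|^{M}<\Ord$, handled by the chain condition, i.e.\ essentially \Levy--Solovay) is fine, and so is the translation between the first- and second-order formulations. But the main case $|\p|^{M}=\Ord$ --- which is the whole content of the theorem, since every set of $M$ has size at most its largest cardinal $\Ord$ and small forcings are already excluded trivially --- rests on a step that is false in general and for which you offer no argument: that $\bar G=\pi[G\cap X]$ is $\bar N$-generic for $\pi(\p)$. If $E\in X$ is dense in $\p$, genericity of $G$ gives some $p\in G\cap E$, but nothing places such a $p$ inside $X$, so $\bar G$ may miss $\pi(E)$ entirely. (Compare $\p=\Coll(\omega,\omega_1)$ with $X$ countable: the generic surjection takes a value outside $X$ at some $n$, so $G\cap X$ avoids the dense set of conditions defined at $n$.) Arranging that a collapsed filter be generic over the collapse is exactly what properness-type hypotheses or the freedom to choose $G$ after $X$ are for, and you have neither: $G$ is handed to you and $\p$ is an arbitrary set forcing of $M$. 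You flag this as a ``delicate point,'' but it is not a detail; without it the reflected tuple is not a counterexample and the minimal-counterexample induction never closes. (A second worry in the same step: even granting genericity of $\bar G$, you would still need $\pi(\dot f)^{\bar G}$ to coincide with $f\restrict\alpha$, which again requires control of $X[G]\cap \bar M$ that you do not have.) Note also that the cofinality obstruction you lean on gives nothing in this case: a forcing of size $\Ord$ is not $\alpha$-cc for any $\alpha\in D$ and may perfectly well singularize measurables --- $\p_D$ itself does.

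The paper's proof avoids reflection entirely and diagonalizes against a single name inside $M$. From the Mathias criterion, the regressive function $g(\alpha)=\min(f(\alpha))$ on $D$ dominates every regressive function on $D$ lying in $M$ at all but finitely many points (intersect each $f(\alpha)$ with the tail measure-one set above $g_0(\alpha)$). On the other hand, since $|\p|\leq\Ord$ in $M$, one can list $\p=\{p_\xi\mid\xi\in\Ord\}$ with every condition appearing cofinally often, choose $p^*_\xi\leq p_\xi$ deciding $\dot g(\alpha_\xi)=\beta_\xi$, and set $g_0(\alpha_\xi)=\beta_\xi+1$; then every condition has an extension forcing $\dot g(\alpha_\xi)<g_0(\alpha_\xi)$ for cofinally many $\xi$, so no condition forces $\dot g$ to dominate $g_0$ on a tail of $D$. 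Some argument of this kind --- working with names and exploiting that $\p$ has only $\Ord$-many conditions while $D$ has order type $\Ord$ --- is what the $|\p|=\Ord$ case requires.
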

\begin{proof}
Consider the regressive function $g$ with domain $D$ defined by $g(\alpha) =
\text{min}(f(\alpha))$. If $g_0$ is any regressive function in $M$ on $D$ then by
genericity, $g$ dominates $g_0$ at all but finitely many elements of $D$. But if $\p$ is
any set-forcing of $M$, $\p$ has size at most $\Ord$ in $M$, and therefore we will argue that $\p$ cannot add such a
dominating function. Let $\{p_\xi\mid\xi\in\Ord\}$ be a listing of the elements of $\p$ in which every element of $\p$ appears cofinally often. Let $\dot g$ be a $\p$-name for a regressive function on $D$. For every $\xi \in \Ord$, choose a condition
$p^*_\xi$ extending $p_\xi$ that decides $\dot g(\alpha_\xi)=\beta_\xi$,
where $\alpha_\xi$ is the $\xi$-th element of $D$. Define $g_0(\alpha_\xi) = \beta_\xi + 1$. Then
any condition $p_\xi$ in $\p$ has an extension forcing $g_0(\alpha_\xi) > \dot g(\alpha_\xi)$.
Since every $p\in\p$ appears in our listing cofinally often, for every $\xi < \kappa$, every
$p\in\p$ has an extension forcing $g_0(\alpha_\xi) > \dot g(\alpha_\xi)$, which means that
$\dot g$ cannot be forced to dominate all regressive $g_0$ on $D$ in $M$ on a final
segment of $D$.

\end{proof}

We will need to make use of the following theorem.
\begin{thm}\label{th:PrikryForcingPreservesExtraMeasurables}
Forcing with $\p_D$ preserves all measurable cardinals not in $D$. Indeed, if $\kappa\notin D$ is a measurable cardinal in $M$ and $\mu$ is a normal measure on $\kappa$ that does not concentrate on measurable cardinals, then $\kappa$ has a normal measure $\bar\mu\in M[f]$, a forcing extension by $\p_D$, such that $\bar A\in\bar\mu$ if and only if there is $A\in \mu$ with $A\subseteq\bar A$.
\end{thm}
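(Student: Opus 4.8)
The plan is to preserve measurability by lifting the ultrapower embedding $j\colon M\to N=\Ult(M,\mu)$, where $\crit(j)=\kappa$. The first observation is that, since $\mu$ does not concentrate on measurable cardinals, $\{\alpha<\kappa\mid\alpha\text{ is measurable}\}\notin\mu$, so by {\L}o\'s $N\models$``$\kappa$ is not measurable'' and hence $\kappa\notin j(D)$; moreover $j(D)\cap\kappa=D\cap\kappa$, and $\p_{D\cap\kappa}$ is computed identically in $M$ and $N$ because $\crit(j)=\kappa$. I would factor $\p_D=\p_{D\cap\kappa}\times\p_{D\setminus\kappa}$ and write $f=f_{<\kappa}\times f_{\geq\kappa}$. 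The crucial reduction is that $P(\kappa)^{M[f]}=P(\kappa)^{M[f_{<\kappa}]}$: every $\bar A\subseteq\kappa$ is a function $\kappa\to 2$, and since $\kappa$ is strictly below the least element $\lambda_0$ of $D$ above $\kappa$, Proposition~\ref{prop:prikryProperty} shows $\bar A$ is added by $f\restrict\lambda_0=f_{<\kappa}$. Thus it suffices to prove that $\mu$ generates a normal measure on $P(\kappa)^{M[f_{<\kappa}]}$. When $D\cap\kappa$ is bounded this is simply the \Levy--Solovay theorem applied to the small forcing $\p_{D\cap\kappa}$; the substance is the case where $D\cap\kappa$ is unbounded in $\kappa$, which the argument below handles uniformly.

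Next I compute $j(\p_{D\cap\kappa})=\p_{D\cap\kappa}\times\p_{\mathrm{tail}}$ in $N$, where $\p_{\mathrm{tail}}=\p_{j(D)\cap(\kappa,j(\kappa))}^N$ and the first factor is unchanged. The goal is to build, inside $M[f_{<\kappa}]$, an $N[f_{<\kappa}]$-generic $f_{\mathrm{tail}}$ for $\p_{\mathrm{tail}}$ with $j\pwimg f_{<\kappa}\subseteq f_{<\kappa}\times f_{\mathrm{tail}}$, so that $j$ lifts to $\hat\jmath\colon M[f_{<\kappa}]\to N[f_{<\kappa}][f_{\mathrm{tail}}]$ with $f^*:=f_{<\kappa}\times f_{\mathrm{tail}}\in M[f_{<\kappa}]$. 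For $p=(h_p,H_p)\in f_{<\kappa}$ one checks that $j(p)$ has below-$\kappa$ part exactly $p$ (as $j$ fixes $h_p$ and $H_p$ pointwise on $D\cap\kappa\subseteq\kappa$) and tail part $(\emptyset,j(H_p)\restrict\mathrm{tail})$, so the containment $j\pwimg f_{<\kappa}\subseteq f^*$ reduces to showing $(\emptyset,j(H_p)\restrict\mathrm{tail})\in f_{\mathrm{tail}}$ for every $p$.

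The heart of the proof, and the step I expect to be the main obstacle, is to define $f_{\mathrm{tail}}$ and verify its genericity. Rather than searching for a single lower bound of $\{j(H_p)\mid p\in f_{<\kappa}\}$ (which fails, since the tail measures $\mu_\delta^N$ are only $\delta$-complete while there can be too many $H_p$), I would transfer the generic Prikry sequences through the ultrapower: for a tail coordinate $\delta=[g]_\mu$ with $g(\alpha)\in D\cap\kappa$ for $\mu$-a.e.\ $\alpha$, set $f_{\mathrm{tail}}(\delta)=[\alpha\mapsto f_{<\kappa}(g(\alpha))]_\mu$, the $\mu$-ultraproduct of the generic sequences below $\kappa$. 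Genericity is then verified not by meeting dense sets but through Fuchs' Mathias criterion (Theorem~\ref{th:MathiasCriterion}): given a measure-one function $H\in N$ on the tail, represented as $H=[\alpha\mapsto H^\alpha]_\mu$ with each $H^\alpha$ a measure-one function on $D\cap\kappa$ in $M$, I would form the single ground-model measure-one function $\bar H(\gamma)=\triangle_{\beta<\gamma}H^\beta(\gamma)$ on $D\cap\kappa$, using normality of each $\mu_\gamma$ so that $\bar H(\gamma)\in\mu_\gamma$ and $\bar H(\gamma)\subseteq^* H^\alpha(\gamma)$ whenever $\alpha<\gamma$. The Mathias criterion for $f_{<\kappa}$ over $M$ gives that $\bigcup_{\gamma\in D\cap\kappa}f_{<\kappa}(\gamma)\setminus\bar H(\gamma)$ is finite; pushing this through $\mu$ (and using that a tail coordinate $\delta=[g]_\mu$ satisfies $g(\alpha)>\alpha$ a.e.) yields that $\bigcup_\delta f_{\mathrm{tail}}(\delta)\setminus H(\delta)$ is finite, which is exactly the Mathias criterion for $f_{\mathrm{tail}}$. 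Taking $H=j(H_p)\in N$ shows simultaneously that $(\emptyset,j(H_p)\restrict\mathrm{tail})\in f_{\mathrm{tail}}$. This diagonal-intersection pullback is precisely what defeats the completeness mismatch in the unbounded case. Passing from genericity over $N$ to genericity over $N[f_{<\kappa}]$ is a routine \Levy--Solovay observation, since $\p_{D\cap\kappa}$ has the $\kappa^+$-cc and is therefore small relative to every tail measurable $\delta$, so that every measure-one function in $N[f_{<\kappa}]$ refines a ground-model one in $N$.

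Finally, the lift $\hat\jmath$ witnesses that $\bar\mu:=\{\bar A\in P(\kappa)^{M[f]}\mid\kappa\in\hat\jmath(\bar A)\}$ is a normal measure on $\kappa$ in $M[f]$. That $\bar\mu$ is generated by $\mu$ as stated follows by reading off both inclusions: if $A\in\mu$ and $A\subseteq\bar A$, then $\kappa\in j(A)\subseteq\hat\jmath(\bar A)$, so $\bar A\in\bar\mu$; conversely, if $\kappa\in\hat\jmath(\bar A)$, then by the Prikry property some pure condition in $f^*$ forces $\check\kappa\in j(\dot{\bar A})$ for a $\p_{D\cap\kappa}$-name $\dot{\bar A}\in M$, and {\L}o\'s reflects this to a $\mu$-measure-one set $A\subseteq\bar A$. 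This establishes that $\kappa$ remains measurable in $M[f]$ with the measure described.
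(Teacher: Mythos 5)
The paper does not actually prove this theorem; it defers entirely to Section~6 of Gitik's handbook chapter, where the statement is Magidor's theorem that a normal measure not concentrating on measurables generates a normal measure after the Magidor iteration. There the argument shows \emph{directly} that $\mu$ generates an ultrafilter on $P(\kappa)^{M[f_{<\kappa}]}$, using the Prikry property of the tail of $j(\p_{D\cap\kappa})$ together with the $\kappa^+$-closure of its direct-extension ordering and diagonal intersections; no generic for the tail forcing is ever constructed. Your frame (factoring $\p_D$, using Proposition~\ref{prop:prikryProperty} to reduce to $\p_{D\cap\kappa}$, computing $j(\p_{D\cap\kappa})$, and noting that a naive master condition below all $j(H_p)$ is unavailable) is sound, but the step you yourself identify as the heart of the proof has a genuine gap.

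The object $f_{\mathrm{tail}}(\delta)=[\alpha\mapsto f_{<\kappa}(g(\alpha))]_\mu$ is not well defined: $\mu$ is an ultrafilter only on $P(\kappa)^M$, while the representing functions $\alpha\mapsto f_{<\kappa}(g(\alpha))$ (and every set on which a {\L}o\'s-style computation would have to be evaluated, such as $\{\alpha\mid h(\alpha)\in f_{<\kappa}(g(\alpha))\}$ or $\{\alpha\mid f_{<\kappa}(g(\alpha))\setminus H^\alpha(g(\alpha))=\emptyset\}$) live in $M[f_{<\kappa}]$. Having {\L}o\'s for such functions is essentially equivalent to knowing that $\mu$ generates an ultrafilter in $M[f_{<\kappa}]$ --- which is the theorem being proved --- so the construction is circular. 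If you instead restrict to ground-model selectors, i.e.\ set $f_{\mathrm{tail}}(\delta)=\{[h]_\mu\mid h\in M$ and $h(\alpha)\in f_{<\kappa}(g(\alpha))$ on a set in $\mu\}$, nothing guarantees this set has order type $\omega$ and is cofinal in $\delta$ (genericity in fact works \emph{against} ground-model functions selecting points of the Prikry sequences on large sets). There is also a quantitative problem in the Mathias verification even granting the ultraproduct: the diagonal intersection only gives $\bar H(\gamma)\setminus H^\alpha(\gamma)\subseteq\alpha+1$, so you obtain $f_{<\kappa}(g(\alpha))\setminus H^\alpha(g(\alpha))\subseteq f_{<\kappa}(g(\alpha))\cap(\alpha+1)$, a finite but possibly nonempty set for almost every $\alpha$; pushed through $\mu$ this yields only that $f_{\mathrm{tail}}(\delta)\setminus H(\delta)$ is finite at \emph{each} of the class-many tail coordinates, whereas the Mathias criterion requires the union over all $\delta$ to be finite, i.e.\ emptiness at all but finitely many coordinates. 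To repair the proof you should abandon the explicit tail generic and argue as Magidor and Gitik do: given a $\p_{D\cap\kappa}$-name $\dot A$ for a subset of $\kappa$, use the Prikry property and the $\kappa^+$-closure of the direct-extension order of the tail forcing over $N[f_{<\kappa}]$ (the least element of $j(D)$ above $\kappa$ is inaccessible in $N$, hence above $\kappa^+$) to find a single direct extension of $(\emptyset,j(H_p)\restrict\mathrm{tail})$ deciding $\check\alpha\in j(\dot A)$ for every $\alpha<\kappa$, and then reflect via normality to produce a set in $\mu$ inside $\dot A$ or its complement. Your final paragraph (reading off $\bar\mu$ from the decisions about $\kappa\in j(\dot{\bar A})$) is then correct as stated.
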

\noindent See Section~6 of \cite{gitik:handbook} for a proof.

Suppose that $m^{\#}_2$ exists and $C_1\supseteq C_2$ are class clubs of uncountable cardinals such that $C_2$ is $\Sigma_1$-stable relative
  to $C_1$. Let $M$ be the (untruncated) iterate of $m^{\#}_2$ constructed as in the proof of Theorem~\ref{th:L[C1,C2]} for the clubs $C_1^*$ and $C_2^*$ consisting of the limit points of $C_1$ and $C_2$ respectively. With this set-up, we get the following generalization of Welch's theorem.
\begin{thm}
The model $M[C_1,C_2]$ is a forcing extension of $M$ by the class forcing iteration $\p_{D_2}*\dot\p_{D_1}$, where $D_i$, for $i=1,2$, is the class of strictly $i$-measurable cardinals. The iteration $\p_{D_2}*\dot\p_{D_1}$ is equivalent to the product $\p_{D_2}\times \p_{D_1}$. Moreover, $M_{C_1^*,C_2^*}[C_1,C_2]=L[C_1,C_2]$, and the latter is then the first-order part of a hyperclass-forcing extension of the resulting Kelley-Morse model.
\end{thm}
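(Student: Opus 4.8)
The plan is to run the argument behind Welch's Theorem~\ref{th:hyperclassWelch} on the two levels of measurability at once. Write $D_2=\hat C_2^*$ for the class of $2$-measurable cardinals of $M=M_{C_1^*,C_2^*}$ and $D_1=\hat C_1^*$ for the strictly $1$-measurable cardinals (these are disjoint, and below each $\beta\in D_2$ the set $D_1$ is cofinal), and let $W_\beta$ ($\beta\in D_2$) and $U_\alpha$ ($\alpha\in D_1$) be the iteration measures furnished by Theorem~\ref{th:mouseInModel}, so that a set lies in $W_\beta$ iff it contains a tail of $C_2\cap\beta$ and in $U_\alpha$ iff it contains a tail of $C_1\cap\alpha$. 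These are exactly the measures with which $\p_{D_2}$ and $\p_{D_1}$ are defined. From the predicates I read off the candidate generics: $f_2$ on $D_2$ with $f_2(\beta)$ the increasing cofinal $\omega$-sequence enumerating the top block of $C_2\cap\beta$, and $f_1$ on $D_1$ defined analogously from $C_1$.

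First I would check that $f_2$ is $\p_{D_2}$-generic over $M$ via the Mathias criterion (Theorem~\ref{th:MathiasCriterion}): for any $H$ with $H(\beta)\in W_\beta$, each $H(\beta)$ contains a tail of $C_2\cap\beta$, so $f_2(\beta)\setminus H(\beta)$ is finite on each coordinate, and the substantive point is the reflection argument showing $\bigcup_{\beta\in D_2}f_2(\beta)\setminus H(\beta)$ is finite as a whole. I would then pass to $M[f_2]$. Since each $\alpha\in D_1$ is strictly $1$-measurable, $U_\alpha$ does not concentrate on measurable cardinals, so by Theorem~\ref{th:PrikryForcingPreservesExtraMeasurables} every $\alpha\in D_1$ stays measurable in $M[f_2]$ with the canonical extension $\bar U_\alpha$, where $\bar A\in\bar U_\alpha$ iff $\bar A\supseteq A$ for some $A\in U_\alpha$. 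The same computation shows $f_1$ is $\p_{D_1}$-generic over $M[f_2]$, so $M[f_2][f_1]$ is a genuine $\p_{D_2}*\dot\p_{D_1}$-extension. Because the $D_1$-measures in $M[f_2]$ are generated by ground-model measures, the Prikry property of $\p_{D_2}$ lets one reduce every $\p_{D_2}$-name for a $\p_{D_1}$-condition to a ground-model condition; this is the standard reason the Magidor iteration over the discrete sets $D_2$ and $D_1$ factors as the product, so that $f_2$ and $f_1$ are mutually $M$-generic and $\p_{D_2}*\dot\p_{D_1}\cong\p_{D_2}\times\p_{D_1}$.

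The core of the theorem is the identity $M[C_1,C_2]=M[f_2,f_1]$. The inclusion $M[f_2,f_1]\subseteq M[C_1,C_2]$ is the definability of the generics: $M$ defines $D_1$ and $D_2$ as its strictly $1$- and its $2$-measurable cardinals, and $f_1,f_2$ are then read off from $C_1,C_2$ as above. For the reverse inclusion I recover the clubs by induction up the measurable hierarchy of $M$: the non-limit points of $C_2$ appear as the entries of the blocks $f_2(\beta)$, $\beta\in D_2$, while the limit points of $C_2$ (in particular the sparse class $D_2=\hat C_2^*$ definable in $M$) are suprema of previously recovered initial segments, and $C_1$ is recovered identically from $f_1$ and the already-reconstructed $C_2$; hence $C_1,C_2\in M[f_2,f_1]$. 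Combining with $M=M_{C_1^*,C_2^*}\subseteq L[C_1,C_2]$ from Theorem~\ref{th:mouseInModel} (and the trivial containment in the other direction) yields $M_{C_1^*,C_2^*}[C_1,C_2]=L[C_1,C_2]$. The hyperclass reformulation is then formal: $M$ is a model of $\ZFC^-_I$ with largest cardinal $\Ord$ and a definable global well-order; by Theorem~\ref{thm:propertiesOfPrikry} the class forcing $\p_{D_2}*\dot\p_{D_1}$ preserves cardinals and hence the inaccessibility of $\Ord$, so it preserves $\ZFC^-_I$, and translating through the bi-interpretability of $\ZFC^-_I$ with ${\rm KM}+{\rm CC}$ exhibits $L[C_1,C_2]$ as the first-order part of the hyperclass-forcing extension of $(M_{C_1^*,C_2^*},\in,\mathcal C)$ by $\p_{D_2}*\dot\p_{D_1}$.

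I expect the main obstacle to be the recovery step together with the genericity check at the limit stages of $C_2$. In the construction of Theorem~\ref{th:L[C1,C2]} one repeatedly had to iterate the top measure of $m_2^\#$ upward to create new strictly $1$-measurable cardinals once the existing ones had ``run out of room'' below the next element of $C_2$; tracking the measure structure produced at those stages is what makes both the Mathias verification for $f_1$ over $M[f_2]$ and the inductive reconstruction of $C_1$ delicate. A second point requiring care is that the $\omega$-Prikry generics singularize every element of $\hat C_1^*\cup\hat C_2^*$ to cofinality $\omega$, so the identification $M[f_2,f_1]=L[C_1,C_2]$ tacitly uses that these cardinals already have cofinality $\omega$ in $L[C_1,C_2]$ (as they do for the limit-cardinal clubs of Welch and for the $\Sigma_n$-stability clubs of interest); this is precisely the feature that lets the top $\omega$-blocks read off by $f_1,f_2$ determine the clubs outright.
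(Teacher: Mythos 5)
Your overall strategy coincides with the paper's: read off $f_1,f_2$ from the clubs, verify the Mathias criterion, use Theorem~\ref{th:PrikryForcingPreservesExtraMeasurables} to keep the $D_1$-measurables alive in $M[f_2]$, and then argue that the second Prikry step can be pulled back to the ground model so that the iteration collapses to a product. The gap is in that last step, which is the technical heart of the paper's proof and which you dispose of as ``the standard reason the Magidor iteration over the discrete sets $D_2$ and $D_1$ factors as the product.'' That standard fact applies to a single discrete set of measurables, but $D_1\cup D_2$ is \emph{not} discrete here --- as you yourself note, $D_1$ is cofinal below every $\beta\in D_2$ --- which is exactly why the forcing must be set up as a two-step iteration $\p_{D_2}*\dot\p_{D_1}$ in the first place. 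What actually has to be proved is that $\p_{D_1}$ (computed in $M$ from the iteration measures $\mu_\alpha$) densely embeds into $\bar\p_{D_1}=(\dot\p_{D_1})_{f_2}$ (computed in $M[f_2]$ from the generated measures $\bar\mu_\alpha$); equivalently, that every function $F\in M[f_2]$ with $F(\alpha)\in\mu_\alpha$ for all $\alpha\in D_1$ admits a refinement $F^*\in M$ with $F^*(\alpha)\in\mu_\alpha$ and $F^*(\alpha)\subseteq F(\alpha)$ everywhere. This one lemma is also what upgrades the Mathias verification for $f_1$ from $M$ to $M[f_2]$, so your ``the same computation shows $f_1$ is generic over $M[f_2]$'' depends on it as well.

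The paper proves the refinement lemma by an $\Ord$-length induction building cohering approximations $f_\beta$ on $D_1\restrict\beta$. The successor step is where the nesting of the clubs enters: by Proposition~\ref{prop:prikryProperty}, $F\restrict\beta$ is already added by $\p_{{D_2}_{<\beta}}$, and since $\beta^*\in D_1$ is strictly $1$-measurable it is not a limit of elements of $D_2$, so $\p_{{D_2}_{<\beta}}$ has size $\lambda<\beta^*$; one then intersects the at most $\lambda$-many measure-one sets that conditions of this small forcing can force into $\dot F(\beta^*)$, landing back in $\mu_{\beta^*}$ by completeness. You correctly sense in your closing paragraph that the interleaving of the two levels is the delicate point, but the Prikry property alone does not deliver a single refining function lying in $M$; the boundedness of $D_2$ below each element of $D_1$ and the completeness of the measures are essential, and this is the piece your proposal is missing. (A minor side remark: your worry about the cofinality of elements of $\hat C_1^*\cup\hat C_2^*$ in $L[C_1,C_2]$ is moot --- each such cardinal is by construction the supremum of an $\omega$-block of the relevant club, so it has cofinality $\omega$ outright, and the identity $M[f_2][f_1]=M[C_1,C_2]$ is just interdefinability of the generics with the clubs over $M$.)
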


\begin{proof} First, observe that the $2$-step iteration $\p_{D_2}*\dot\p_{D_1}$ makes sense over $M$ because both $D_1$ and $D_2$ are discrete sets of measurable cardinals and after we force with $\p_{D_2}$ and singularize all measurable cardinals in $D_2$, every cardinal in $D_1$ remains measurable by Theorem~\ref{th:PrikryForcingPreservesExtraMeasurables}. For every measurable cardinal $\alpha\in M$, let $\mu_\alpha$ be the normal measure on $\alpha$ arising from the iteration. Since no $\mu_\alpha$ concentrates on measurable cardinals (otherwise $\alpha$ would have Mitchell order 2), by Theorem~\ref{th:PrikryForcingPreservesExtraMeasurables}, every measurable cardinal $\alpha\in D_1$ has in $M[f_2]$ a normal measure $\bar\mu_\alpha$ generated by $\mu_\alpha$. Let $\p_{D_1}$ and $\p_{D_2}$ be defined with respect to the measures $\mu_\alpha$, and let $\bar{\p}_{D_1}=(\dot \p_{D_1})_{f_2}$ be defined with respect to the measures $\bar\mu_\alpha$.

Let $f_1$ be the function on the elements $\alpha$ of $\hat C_1^*$ such that $f_1(\alpha)$ is the $\omega$-sequence of elements of $C_1$ limiting up to $\alpha$. Let $f_2$ be the function on the elements $\alpha$ of $\hat C_2^*$ such that $f_2(\alpha)$ is the $\omega$-sequence of elements of $C_2$ limiting up to $\alpha$. The arguments in \cite{Welch:ClubClassHartigQuantifierModel} already verify that $f_2$ satisfies the Mathias criterion for $\p_{D_2}$ and $f_1$ satisfies the Mathias criterion for $\p_{D_1}$, the Prikry forcing for $D_1$ defined over $M$. Indeed, we will now argue that $\p_{D_1}$ densely embeds into $\bar{\p}_{D_1}$. Since class forcing notions with set-sized antichains which densely embed produce the same forcing extensions (see \cite{PeterHolyRegulaKrapfPhilippSchlicht:classforcing2}), we will be able to assume without loss that we are actually forcing with $\p_{D_1}$.

It suffices to argue that for every function $F$ on $D_1$ in $M[f_2]$ such that $F(\alpha)\in\mu_\alpha$, there is a function $F^*\in M$ such that $F^*(\alpha)\in \mu_\alpha$ and $F^*(\alpha)\subseteq F(\alpha)$ for every $\alpha\in D_1$. Let $\dot F$ be a $\p_{D_2}$-name for $F$ such that $\one_{\p_{D_2}}$ forces that $\dot F(\alpha)\in\check\mu_\alpha$ for every measurable $\alpha$ (this requires mixing names). We will argue in $M$, by induction on $\beta\leq \Ord$, that we can define cohering functions $f_\beta$ on $D_1\restrict \beta$ such that $f_\beta(\alpha)\in\mu_\alpha$ and $\one_{\p_{D_2}}\forces \check f_\beta(\alpha)\subseteq \dot F(\alpha)$ for every $\alpha\in D_1\restrict\beta$. Suppose inductively that we can construct $f_\gamma$ for $\gamma<\beta$ as required. Let's argue that we can construct $f_\beta$. If $\beta$ is a limit ordinal, then $f_\beta$ is just the union of the $f_\gamma$. So suppose that $\beta=\beta^*+1$ and assume that $F$ is defined at $\beta^*$ because otherwise there is nothing to prove. Observe that $F\restrict \beta$ must be added by $\p_{{D_2}_{<\!\beta}}$ by Proposition~\ref{prop:prikryProperty}. Since $\beta^*\in C_1$ cannot be a limit of elements of $C_2$, $\p_{{D_2}_{<\!\beta}}$ must have size $\lambda<\beta^*$. Let $\dot f$ be a $\p_{{D_2}_{<\!\beta}}$-name for $F\restrict\beta$ such that $\one_{\p_{D_2}}\forces \dot f=\dot F\restrict\beta$. For every condition $p\in \p_{{D_2}_{<\!\beta}}$, if $p$ forces that some $A\in \mu_{\beta^*}$ is contained in $\dot f(\beta^*)$, then choose some such $A_p$. Since there are at most $\lambda$-many such sets $A_p\in\mu_{\beta^*}$ and $\lambda<\beta^*$, we can intersect them all to obtain a set $A\in\mu_{\beta^*}$ such that $\one_{\p_{{D_2}_{<\!\beta}}}\forces\check A\subseteq \dot f(\beta^*)$. It follows that $f_\beta$ defined to extend $f_{\beta^*}$ with $f_\beta(\beta^*)=A$ satisfies our requirements.

This completes the argument that $\p_{D_1}$ densely embeds into $\bar{\p}_{D_1}$. The argument also shows that $f_1$ meets the Mathias criterion for $\bar \p_{D_1}$ because it met the Mathias criterion for $\p_{D_1}$ over $M$ and every sequence of measure one sets from $M[f_2]$ can be thinned out on each coordinate to a sequence of measure one sets which exists in $M$. Thus, $f_1$ is $M[f_2]$-generic for $\bar{\p}_{D_1}$.

Finally, by Theorem~\ref{thm:propertiesOfPrikry}, forcing with $\bar{\p}_{D_1}$ preserves the inaccessibility of $\Ord$ in $M[f_2]$, so that we can form the hyperclass forcing extension of the Kelley-Morse model whose first-order part is $M_{C_1^*,C_2^*}$ and the first-order part of the forcing extension is then the model $M_{C_1^*,C_2^*}[f_2][f_1]=M_{C_1^*,C_2^*}[C_1,C_2]$. Using Theorem~\ref{th:mouseInModel}, it is clear that $M_{C_1^*,C_2^*}[C_1,C_2]=L[C_1,C_2]$.
\end{proof}

The characterization easily generalizes to $n$-many clubs $C_1, \dots, C_n$. Suppose that $m_n^{\#}$ exists and $C_1\supseteq C_2\supseteq\ldots\supseteq C_n$ are clubs of uncountable cardinals such that $C_i$ is $\Sigma_1$-stable relative to $C_1,\ldots,C_{i-1}$ for all $1<i\leq n$. Let $M$ be the (untruncated) iterate of $m_n^{\#}$ constructed as usual for the clubs $C_1^*,\ldots,C_n^*$ consisting of the limit points of the clubs $C_1,\ldots,C_n$ respectively.
\begin{thm}\label{th:CharacterizationForNClubs}
The model $M[C_1,\ldots,C_n]$ is a forcing extension of $M$ by the class forcing iteration $\p_{D_n}*\cdots*\dot\p_{D_1}$, where $D_i$, for $1\leq i\leq n$, is the class of strictly $i$-measurable cardinals. The iteration $\p_{D_n}*\cdots*\dot\p_{D_1}$ is equivalent to the product $\p_{D_n}\times\cdots\times \p_{D_1}$. Moreover, $M_{C_1^*,\ldots,C_n^*}[C_1,\ldots,C_n]=L[C_1,\cdots,C_n]$, and the latter is then the first-order part of a hyperclass-forcing extension of the resulting Kelley-Morse model.
\end{thm}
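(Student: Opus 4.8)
The plan is to mirror the two-club argument of the preceding theorem, now with the $n$-step iteration $\p_{D_n}*\cdots*\dot\p_{D_1}$ in place of $\p_{D_2}*\dot\p_{D_1}$. I would organize the proof by working through the levels from the top ($D_n$) down to the bottom ($D_1$), repeating the two-club genericity argument at each level; the base case, a single club, is exactly Welch's Theorem~\ref{th:hyperclassWelch}. As before, $f_i$ is the function on $\hat C_i^*$ sending each $\alpha$ to the increasing $\omega$-sequence of elements of $C_i$ cofinal in $\alpha$, and by Theorem~\ref{th:mouseInModelGeneral} the model $L[C_1,\ldots,C_n]$ is recovered as $M_{C_1^*,\ldots,C_n^*}[f_n]\cdots[f_1]$.

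First I would check that the iteration is well-defined, i.e., that after the higher levels are singularized the lower-level measurables survive. As in the two-club case, the minimality of $m_n^\#$ guarantees that no iteration measure $\mu_\alpha$ concentrates on measurable cardinals. Hence, forcing successively with $\p_{D_n},\ldots,\p_{D_{i+1}}$ and applying Theorem~\ref{th:PrikryForcingPreservesExtraMeasurables} at each step, every $\alpha\in D_i$ retains a normal measure $\bar\mu_\alpha$ generated by $\mu_\alpha$, so that the realization $\bar\p_{D_i}$ of $\dot\p_{D_i}$ over the intermediate model is meaningful.

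The heart of the argument is the level-by-level dense-embedding claim generalizing the two-club case: for each $i$ the ground-model Prikry forcing $\p_{D_i}$ densely embeds into $\bar\p_{D_i}$, so the two are interchangeable and $f_i$ meets the Mathias criterion (Theorem~\ref{th:MathiasCriterion}) for $\bar\p_{D_i}$. I would prove this by an induction along $\Ord$ thinning each measure-one function $F$ on $D_i$ (living in the extension by the higher levels) to a ground-model function $F^*$ with $F^*(\alpha)\in\mu_\alpha$ and $F^*(\alpha)\subseteq F(\alpha)$. At a successor point $\beta^*\in\hat C_i^*$, the part of the higher-level product $\p_{D_n}\times\cdots\times\p_{D_{i+1}}$ with support below $\beta^*$ adds $F\restrict\beta^*$ (Proposition~\ref{prop:prikryProperty}) and has size $\lambda<\beta^*$, since $\beta^*$, being strictly $i$-measurable, is not a limit of $j$-measurables for any $j>i$ and so the supports $D_j\cap\beta^*$ are bounded below $\beta^*$; this is precisely the measurable structure secured by the nested $\Sigma_1$-stability hypotheses in Theorem~\ref{th:L[C1,...,Cn]}. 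With the support bounded, the same intersection of fewer than $\beta^*$-many measure-one sets as in the two-club proof yields the required $A\in\mu_{\beta^*}$.

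With the dense embeddings established, each $\dot\p_{D_i}$ is interchangeable with its ground-model version $\p_{D_i}$, so the iteration $\p_{D_n}*\cdots*\dot\p_{D_1}$ is equivalent to the product $\p_{D_n}\times\cdots\times\p_{D_1}$, and the simultaneous thinning shows that the tuple $(f_n,\ldots,f_1)$ is generic for the product. Since each factor preserves the inaccessibility of $\Ord$ by Theorem~\ref{thm:propertiesOfPrikry}, truncation at $\Ord$ again produces a Kelley-Morse model whose hyperclass-forcing extension has first-order part $M_{C_1^*,\ldots,C_n^*}[f_n]\cdots[f_1]=M_{C_1^*,\ldots,C_n^*}[C_1,\ldots,C_n]$, which equals $L[C_1,\ldots,C_n]$ by Theorem~\ref{th:mouseInModelGeneral}. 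The main obstacle is the coherence of the iterated thinning across all $n$ levels: one must simultaneously guarantee the survival of the measures $\bar\mu_\alpha$ and the size bounds at each point $\beta^*$, and it is precisely the nested $\Sigma_1$-stability of the clubs that keeps these compatible.
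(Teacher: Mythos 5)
Your proposal is correct and follows exactly the route the paper intends: the paper gives no separate proof of Theorem~\ref{th:CharacterizationForNClubs}, merely asserting that the two-club argument ``easily generalizes,'' and your level-by-level descent from $D_n$ to $D_1$ --- re-establishing the dense embedding of each ground-model $\p_{D_i}$ into its reinterpreted version via the support bound at strictly $i$-measurable points and then invoking the Mathias criterion --- is precisely that generalization. The one point worth being careful about when writing it up in full is that Proposition~\ref{prop:prikryProperty} is stated for a single discrete $D$, whereas the union $D_{i+1}\cup\cdots\cup D_n$ is not discrete, so the factoring/Prikry-property argument must be run for the finite product of Prikry forcings rather than quoted verbatim; this is routine but should be said.
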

Theorem~\ref{th:OnlyByClassForcing} also generalizes to show that such an extension cannot be obtained by a set forcing over $M$ (or equivalently a class forcing over $M_{C_1^*,\ldots,C_n^*}$).
\section{Open Questions}
The article did not answer several difficult questions about the structure of the stable core. In Sections~\ref{sec:coding}~and~\ref{sec:measurables}, we showed how to code information into the stable core over small canonical inner models using the fact that these models must be contained in the stable core so that we can use them for decoding. In a very recent work Friedman showed that there is a large cardinal notion below a Woodin cardinal such that the stability predicate is definable over an iterate of a mouse with such a large cardinal. This immediately implies that we cannot code any set into the stable core and that there is a bound (below a Woodin cardinal) on the large cardinals that can exist in the stable core \cite{Friedman:LimitationsStableCore}. This leaves the following open questions regarding the structure of the stable core.

We still don't have a precise upper bound on the large cardinals that can exist in the stable core.
\begin{question}
Can the stable core have a measurable limit of measurable cardinals?
\end{question}
For $\HOD$, we know that the $\HOD$ of $\HOD$ can be smaller than $\HOD$  and that any universe $V$ is the $\HOD$ of a class forcing extension of itself.

\begin{question}
Can the stable core of the stable core be smaller than the stable core?
\end{question}

\begin{question}
When is $V$ the stable core of an outer model?
\end{question}
Finally, with regard to Section~\ref{sec:characterization}, we can ask whether the results there generalize to $\omega$-many clubs.
\begin{question}
Is there a version of Theorem~\ref{th:CharacterizationForNClubs} for $\omega$-many clubs?
\end{question}

\bibliographystyle{alpha}
\bibliography{StableCore}
%\nocite{*}

\end{document}